\pgfplotsset{compat=1.10}
\newdimen\slantmathcorr
\def\oversl#1{
\setbox0=\hbox{$#1$}
\slantmathcorr=\wd0
\hskip 0.2\slantmathcorr \overline{\hbox to 0.8\wd0{%
\vphantom{\hbox{$#1$}}}}
\hskip-\wd0\hbox{$#1$}
}
\def\undersl#1{
\setbox0=\hbox{$#1$}
\slantmathcorr=\wd0
\underline{\hbox to 0.8\wd0{%
\vphantom{\hbox{$#1$}}}}
\hskip-0.8\wd0\hbox{$#1$}
}
\newtheorem{theorem}{Theorem}[section]
\newtheorem{lemma}[theorem]{Lemma}
\newtheorem{corollary}[theorem]{Corollary}
\theoremstyle{definition}
\newtheorem{definition}[theorem]{Definition}
\newtheorem{example}[theorem]{Example}
\newtheorem{question}[theorem]{Question}
\theoremstyle{remark}
\newtheorem{remark}[theorem]{Remark}
\newcommand{\RR}{\mathbb{R}}
\newcommand{\ZZ}{\mathbb{Z}}
\newcommand \U {\mathcal{U}}
\newcommand \mS {\mathcal{S}}
\newcommand \ubf {\mathbf{u}}
\newcommand{\NN}{\mathbb{N}}
\newcommand{\QQ}{\mathbb{Q}}
\newcommand{\CC}{\mathbb{C}}
\newcommand{\PSD}{\mathcal{S}_+}
\newcommand{\sym}{\textup{sym}}
\newcommand{\A}{{\mathcal{A}}}
\newcommand{\B}{{\mathcal{B}}}
\newcommand{\e}{{\mathbf{e}}}
\def \trop {\operatorname{trop}}
\def \fac {\operatorname{fac}}
\newcommand{\Pair}{{\mathcal{P}}}
\newcommand{\G}{{\mathcal{G}}}
\newcommand{\firstgraph}[0]{%
\begin{tikzpicture}[baseline={([yshift=-.5ex]current bounding box.center)}]
\foreach \a in {1,2,3,4,5,6,7}{
\draw (\a*360/7: 0.4cm) node[fill=black, circle,  inner sep=1pt, minimum size=0.1cm] (\a){};
}
\draw (3)--(1)--(2)--(3)--(4)--(5)--(6)--(7);
\draw (5)--(1)--(6);
\end{tikzpicture}}
\newcommand{\Hzero}[3]{
\begin{tikzpicture}[baseline={([yshift=-.5ex]current bounding box.center)}]
\node[fill=black, circle, inner sep=1pt, minimum size=0.1cm,label=right:\footnotesize{#1}] (1) {};
\node[fill=black, circle, inner sep=1pt, minimum size=0.1cm, label=left:\footnotesize{#2}] (2) [below left = 0.3cm of 1] {};
\node[fill=black, circle, inner sep=1pt, minimum size=0.1cm, label=right:\footnotesize{#3}] (3) [below right = 0.3cm of 1] {};
\end{tikzpicture}}
\newcommand{\Hone}[3]{
\begin{tikzpicture}[baseline={([yshift=-.5ex]current bounding box.center)}]
\node[fill=black, circle, inner sep=1pt, minimum size=0.1cm, label=right:\footnotesize{#1}] (1) {};
\node[fill=black, circle, inner sep=1pt, minimum size=0.1cm, label=left:\footnotesize{#2}] (2) [below left = 0.3cm of 1] {};
\node[fill=black, circle, inner sep=1pt, minimum size=0.1cm, label=right:\footnotesize{#3}] (3) [below right = 0.3cm of 1] {};
\draw (2)--(1);
\end{tikzpicture}}
\newcommand{\Htwo}[3]{
\begin{tikzpicture}[baseline={([yshift=-.5ex]current bounding box.center)}]
\node[fill=black, circle, inner sep=1pt, minimum size=0.1cm, label=right:\footnotesize{#1}] (1) {};
\node[fill=black, circle, inner sep=1pt, minimum size=0.1cm, label=left:\footnotesize{#2}] (2) [below left = 0.3cm of 1] {};
\node[fill=black, circle, inner sep=1pt, minimum size=0.1cm, label=right:\footnotesize{#3}] (3) [below right =0.3cm of 1] {};
\draw (3)--(1)--(2);
\end{tikzpicture}}
\newcommand{\uHtwo}[0]{
\begin{tikzpicture}[baseline={([yshift=-.5ex]current bounding box.center)}]
\node[fill=black, circle, inner sep=1pt, minimum size=0.1cm] (1) {};
\node[fill=black, circle, inner sep=1pt, minimum size=0.1cm] (2) [below left = 0.3cm of 1] {};
\node[fill=black, circle, inner sep=1pt, minimum size=0.1cm] (3) [below right =0.3cm of 1] {};
\draw (3)--(1)--(2);
\end{tikzpicture}\hspace{0.1cm} }
\newcommand{\Hthree}[3]{
\begin{tikzpicture}[baseline={([yshift=-.5ex]current bounding box.center)}]
\node[fill=black, circle, inner sep=1pt, minimum size=0.1cm, label=right:\footnotesize{#1}] (1) {};
\node[fill=black, circle, inner sep=1pt, minimum size=0.1cm, label=left:\footnotesize{#2}] (2) [below left = 0.3cm of 1] {};
\node[fill=black, circle, inner sep=1pt, minimum size=0.1cm, label=right:\footnotesize{#3}] (3) [below right = 0.3cm of 1] {};
\draw (1)--(3)--(2)--(1);
\end{tikzpicture}}
\newcommand{\uHthree}[0]{
\begin{tikzpicture}[baseline={([yshift=-.5ex]current bounding box.center)}]
\node[fill=black, circle, inner sep=1pt, minimum size=0.1cm] (1) {};
\node[fill=black, circle, inner sep=1pt, minimum size=0.1cm] (2) [below left = 0.3cm of 1] {};
\node[fill=black, circle, inner sep=1pt, minimum size=0.1cm] (3) [below right = 0.3cm of 1] {};
\draw (1)--(3)--(2)--(1);
\end{tikzpicture}\hspace{0.1cm} }
\newcommand{\Kfour}[4]{
\begin{tikzpicture}[baseline={([yshift=-.5ex]current bounding box.center)}]
\node[fill=black, circle, inner sep=1pt, minimum size=0.1cm, label=left:\footnotesize{#1}] (1) {};
\node[fill=black, circle, inner sep=1pt, minimum size=0.1cm, label=right:\footnotesize{#2}] (2) [right = 0.3cm of 1] {};
\node[fill=black, circle, inner sep=1pt, minimum size=0.1cm, label=right:\footnotesize{#3}] (3) [below = 0.3cm of 2] {};
\node[fill=black, circle, inner sep=1pt, minimum size=0.1cm, label=left:\footnotesize{#4}] (4) [below = 0.3cm of 1] {};
\draw (4)--(2)--(1)--(3)--(4)--(1)--(2)--(3);
\end{tikzpicture}}
\newcommand{\uKfour}[0]{
\begin{tikzpicture}[baseline={([yshift=-.5ex]current bounding box.center)}]
\node[fill=black, circle, inner sep=1pt, minimum size=0.1cm] (1) {};
\node[fill=black, circle, inner sep=1pt, minimum size=0.1cm] (2) [right = 0.3cm of 1] {};
\node[fill=black, circle, inner sep=1pt, minimum size=0.1cm] (3) [below = 0.3cm of 2] {};
\node[fill=black, circle, inner sep=1pt, minimum size=0.1cm] (4) [below = 0.3cm of 1] {};
\draw (4)--(2)--(1)--(3)--(4)--(1)--(2)--(3);
\end{tikzpicture}}
\newcommand{\uKfourminusone}[0]{
\begin{tikzpicture}[baseline={([yshift=-.5ex]current bounding box.center)}]
\node[fill=black, circle, inner sep=1pt, minimum size=0.1cm] (1) {};
\node[fill=black, circle, inner sep=1pt, minimum size=0.1cm] (2) [right = 0.3cm of 1] {};
\node[fill=black, circle, inner sep=1pt, minimum size=0.1cm] (3) [below = 0.3cm of 2] {};
\node[fill=black, circle, inner sep=1pt, minimum size=0.1cm] (4) [below = 0.3cm of 1] {};
\draw (4)--(2)--(1)--(3)--(4)--(1)--(2);
\end{tikzpicture}}
\newcommand{\threestar}[4]{
\begin{tikzpicture}[baseline={([yshift=-.5ex]current bounding box.center)}]
\node[fill=black, circle, inner sep=1pt, minimum size=0.1cm, label=right:\footnotesize{#1}] (1) {};
\node[fill=black, circle, inner sep=1pt, minimum size=0.1cm, label=left:\footnotesize{#2}] (2) [above = 0.3cm of 1] {};
\node[fill=black, circle, inner sep=1pt, minimum size=0.1cm, label=left:\footnotesize{#3}] (3) [below left = 0.3cm of 1] {};
\node[fill=black, circle, inner sep=1pt, minimum size=0.1cm,label=right:\footnotesize{#4}] (4) [below right = 0.3cm of 1] {};
\draw (2)--(1)--(3)--(1)--(4);
\end{tikzpicture}}
\newcommand{\threestarplus}[4]{
\begin{tikzpicture}[baseline={([yshift=-.5ex]current bounding box.center)}]
\node[fill=black, circle, inner sep=1pt, minimum size=0.1cm, label=right:\footnotesize{#1}] (1) {};
\node[fill=black, circle, inner sep=1pt, minimum size=0.1cm, label=left:\footnotesize{#2}] (2) [above = 0.3cm of 1] {};
\node[fill=black, circle, inner sep=1pt, minimum size=0.1cm, label=left:\footnotesize{#3}] (3) [below left = 0.3cm of 1] {};
\node[fill=black, circle, inner sep=1pt, minimum size=0.1cm,label=right:\footnotesize{#4}] (4) [below right = 0.3cm of 1] {};
\draw (2)--(1)--(3)--(1)--(4);
\draw (3)--(4);
\end{tikzpicture}}
\newcommand{\uthreestar}[0]{
\begin{tikzpicture}[baseline={([yshift=-.5ex]current bounding box.center)}]
\node[fill=black, circle, inner sep=1pt, minimum size=0.1cm] (1) {};
\node[fill=black, circle, inner sep=1pt, minimum size=0.1cm] (2) [above = 0.3cm of 1] {};
\node[fill=black, circle, inner sep=1pt, minimum size=0.1cm] (3) [below left = 0.3cm of 1] {};
\node[fill=black, circle, inner sep=1pt, minimum size=0.1cm] (4) [below right = 0.3cm of 1] {};
\draw (2)--(1)--(3)--(1)--(4);
\end{tikzpicture}}
\newcommand{\uthreestarplus}[0]{
\begin{tikzpicture}[baseline={([yshift=-.5ex]current bounding box.center)}]
\node[fill=black, circle, inner sep=1pt, minimum size=0.1cm] (1) {};
\node[fill=black, circle, inner sep=1pt, minimum size=0.1cm] (2) [above = 0.3cm of 1] {};
\node[fill=black, circle, inner sep=1pt, minimum size=0.1cm] (3) [below left = 0.3cm of 1] {};
\node[fill=black, circle, inner sep=1pt, minimum size=0.1cm] (4) [below right = 0.3cm of 1] {};
\draw (2)--(1)--(3)--(1)--(4);
\draw (3)--(4);
\end{tikzpicture}}
\newcommand{\usquareteletubby}[0]{
\begin{tikzpicture}[baseline={([yshift=-.5ex]current bounding box.center)}]
\node[fill=black, circle, inner sep=1pt, minimum size=0.1cm] (1) {};
\node[fill=black, circle, inner sep=1pt, minimum size=0.1cm] (2) [above = 0.3cm of 1] {};
\node[fill=black, circle, inner sep=1pt, minimum size=0.1cm] (3) [below left = 0.3cm of 1] {};
\node[fill=black, circle, inner sep=1pt, minimum size=0.1cm] (4) [below right = 0.3cm of 1] {};
\node[fill=black, circle, inner sep=1pt, minimum size=0.1cm] (5) [below left = 0.3cm of 4] {};
\draw (2)--(1)--(3)--(1)--(4)--(5)--(3);
\end{tikzpicture}}
\newcommand{\vedge}[2]{
\begin{tikzpicture}[baseline={([yshift=-.5ex]current bounding box.center)}]
\node[fill=black, circle, inner sep=1pt, minimum size=0.1cm,label=right:\footnotesize{#1}] (1) {};
\node[fill=black, circle, inner sep=1pt, minimum size=0.1cm,label=right:\footnotesize{#2}] (2) [below = 0.3cm of 1] {};
\draw (2)--(1);
\end{tikzpicture}}
\newcommand{\uvedge}[0]{
\begin{tikzpicture}[baseline={([yshift=-.5ex]current bounding box.center)}]
\node[fill=black, circle, inner sep=1pt, minimum size=0.1cm] (1) {};
\node[fill=black, circle, inner sep=1pt, minimum size=0.1cm] (2) [below = 0.3cm of 1] {};
\node[draw=none, fill=none, circle, inner sep=1pt, minimum size=0.1cm] (4) [left = 0.05cm of 1] {};
\draw (2)--(1);
\end{tikzpicture}\hspace{0.1cm} }
\newcommand{\vnonedge}[2]{
\begin{tikzpicture}[baseline={([yshift=-.5ex]current bounding box.center)}]
\node[fill=black, circle, inner sep=1pt, minimum size=0.1cm, label=right:\footnotesize{#1}] (1) {};
\node[fill=black, circle, inner sep=1pt, minimum size=0.1cm,label=right:\footnotesize{#2}] (2) [below = 0.3cm of 1] {};
\end{tikzpicture}}
\newcommand{\vedgesquared}[4]{
\begin{tikzpicture}[baseline={([yshift=-.5ex]current bounding box.center)}]
\node[fill=black, circle, inner sep=1pt, minimum size=0.1cm,label=left:\footnotesize{#1}] (1) {};
\node[fill=black, circle, inner sep=1pt, minimum size=0.1cm,label=left:\footnotesize{#2}] (2) [below = 0.3cm of 1] {};
\node[fill=black, circle, inner sep=1pt, minimum size=0.1cm,label=right:\footnotesize{#3}] (3) [right = 0.1cm of 1] {};
\node[fill=black, circle, inner sep=1pt, minimum size=0.1cm,label=right:\footnotesize{#4}] (4) [below  = 0.3cm of 3] {};
\draw (2)--(1);
\draw (3)--(4);
\end{tikzpicture}}
\newcommand{\Htwosquared}[6]{
\begin{tikzpicture}[baseline={([yshift=-.5ex]current bounding box.center)}]
\node[fill=black, circle, inner sep=1pt, minimum size=0.1cm, label=right:\footnotesize{#1}] (1) {};
\node[fill=black, circle, inner sep=1pt, minimum size=0.1cm, label=left:\footnotesize{#2}] (2) [below left = 0.3cm of 1] {};
\node[fill=black, circle, inner sep=1pt, minimum size=0.1cm, label=right:\footnotesize{#3}] (3) [below right =0.3cm of 1] {};
\node[fill=black, circle, inner sep=1pt, minimum size=0.1cm, label=right:\footnotesize{#4}] (4) [right = 0.65cm of 1] {};
\node[fill=black, circle, inner sep=1pt, minimum size=0.1cm, label=left:\footnotesize{#5}] (5) [below left = 0.3cm of 4] {};
\node[fill=black, circle, inner sep=1pt, minimum size=0.1cm, label=right:\footnotesize{#6}] (6) [below right =0.3cm of 4] {};
\draw (3)--(1)--(2);
\draw (6)--(4)--(5);
\end{tikzpicture}}
\newcommand{\fourstar}[5]{
\begin{tikzpicture}[baseline={([yshift=-.5ex]current bounding box.center)}]
\node[fill=black, circle, inner sep=1pt, minimum size=0.1cm, label=left:\footnotesize{#1}] (1) {};
\node[fill=black, circle, inner sep=1pt, minimum size=0.1cm, label=left:\footnotesize{#2}] (2) [above left = 0.3cm of 1] {};
\node[fill=black, circle, inner sep=1pt, minimum size=0.1cm, label=right:\footnotesize{#3}] (3) [above right = 0.3cm of 1] {};
\node[fill=black, circle, inner sep=1pt, minimum size=0.1cm,label=right:\footnotesize{#4}] (4) [below right = 0.3cm of 1] {};
\node[fill=black, circle, inner sep=1pt, minimum size=0.1cm,label=left:\footnotesize{#5}] (5) [below left = 0.3cm of 1] {};
\draw (2)--(1)--(3);
\draw (4)--(1)--(5);
\end{tikzpicture}}
\newcommand{\ufourstar}[0]{
\begin{tikzpicture}[baseline={([yshift=-.5ex]current bounding box.center)}]
\node[fill=black, circle, inner sep=1pt, minimum size=0.1cm] (1) {};
\node[fill=black, circle, inner sep=1pt, minimum size=0.1cm] (2) [above left = 0.3cm of 1] {};
\node[fill=black, circle, inner sep=1pt, minimum size=0.1cm] (3) [above right = 0.3cm of 1] {};
\node[fill=black, circle, inner sep=1pt, minimum size=0.1cm] (4) [below right = 0.3cm of 1] {};
\node[fill=black, circle, inner sep=1pt, minimum size=0.1cm] (5) [below left = 0.3cm of 1] {};
\draw (2)--(1)--(3);
\draw (4)--(1)--(5);
\end{tikzpicture}}
\newcommand{\broom}[5]{
\begin{tikzpicture}[baseline={([yshift=-.5ex]current bounding box.center)}]
\node[fill=black, circle, inner sep=1pt, minimum size=0.1cm, label=left:\footnotesize{#1}] (1) {};
\node[fill=black, circle, inner sep=1pt, minimum size=0.1cm, label=left:\footnotesize{#2}] (2) [below = 0.3cm of 1] {};
\node[fill=black, circle, inner sep=1pt, minimum size=0.1cm, label=left:\footnotesize{#3}] (3) [below = 0.3cm of 2] {};
\node[fill=black, circle, inner sep=1pt, minimum size=0.1cm, label=right:\footnotesize{#5}] (4) [below right = 0.3cm of 3] {};
\node[fill=black, circle, inner sep=1pt, minimum size=0.1cm, label=left:\footnotesize{#4}] (5) [below left = 0.3cm of 3] {};
\draw (1)--(2)--(3);
\draw (4)--(3)--(5);
\end{tikzpicture}}
\newcommand{\ubroom}[0]{
\begin{tikzpicture}[baseline={([yshift=-.5ex]current bounding box.center)}]
\node[fill=black, circle, inner sep=1pt, minimum size=0.1cm] (1) {};
\node[fill=black, circle, inner sep=1pt, minimum size=0.1cm] (2) [below = 0.3cm of 1] {};
\node[fill=black, circle, inner sep=1pt, minimum size=0.1cm] (3) [below = 0.3cm of 2] {};
\node[fill=black, circle, inner sep=1pt, minimum size=0.1cm] (4) [below right = 0.3cm of 3] {};
\node[fill=black, circle, inner sep=1pt, minimum size=0.1cm] (5) [below left = 0.3cm of 3] {};
\draw (1)--(2)--(3);
\draw (4)--(3)--(5);
\end{tikzpicture}}
\newcommand{\ushovel}[0]{
\begin{tikzpicture}[baseline={([yshift=-.5ex]current bounding box.center)}]
\node[fill=black, circle, inner sep=1pt, minimum size=0.1cm] (1) {};
\node[fill=black, circle, inner sep=1pt, minimum size=0.1cm] (2) [below = 0.3cm of 1] {};
\node[fill=black, circle, inner sep=1pt, minimum size=0.1cm] (3) [below = 0.3cm of 2] {};
\node[fill=black, circle, inner sep=1pt, minimum size=0.1cm] (4) [below right = 0.3cm of 3] {};
\node[fill=black, circle, inner sep=1pt, minimum size=0.1cm] (5) [below left = 0.3cm of 3] {};
\draw (1)--(2)--(3);
\draw (4)--(3)--(5)--(4);
\end{tikzpicture}}
\newcommand{\pthree}[4]{
\begin{tikzpicture}[baseline={([yshift=-.5ex]current bounding box.center)}]
\node[fill=black, circle, inner sep=1pt, minimum size=0.1cm, label=left:\footnotesize{#1}] (1) {};
\node[fill=black, circle, inner sep=1pt, minimum size=0.1cm, label=left:\footnotesize{#2}] (2) [above = 0.3cm of 1] {};
\node[fill=black, circle, inner sep=1pt, minimum size=0.1cm, label=right:\footnotesize{#3}] (3) [right = 0.3cm of 2] {};
\node[fill=black, circle, inner sep=1pt, minimum size=0.1cm, label=right:\footnotesize{#4}] (4) [below = 0.3cm of 3] {};
\draw (1)--(2)--(3)--(4);
\end{tikzpicture}}
\newcommand{\upthree}[0]{
\begin{tikzpicture}[baseline={([yshift=-.5ex]current bounding box.center)}]
\node[fill=black, circle, inner sep=1pt, minimum size=0.1cm] (1) {};
\node[fill=black, circle, inner sep=1pt, minimum size=0.1cm] (2) [above = 0.3cm of 1] {};
\node[fill=black, circle, inner sep=1pt, minimum size=0.1cm] (3) [right = 0.3cm of 2] {};
\node[fill=black, circle, inner sep=1pt, minimum size=0.1cm] (4) [below = 0.3cm of 3] {};
\draw (1)--(2)--(3)--(4);
\end{tikzpicture}\hspace{0.1cm} }
\newcommand{\smallupthree}[0]{
\begin{tikzpicture}[baseline={([yshift=-.5ex]current bounding box.north)}]
\node[fill=black, circle, inner sep=1pt, minimum size=0.1cm] (1) {};
\node[fill=black, circle, inner sep=1pt, minimum size=0.1cm] (2) [above = 0.13cm of 1] {};
\node[fill=black, circle, inner sep=1pt, minimum size=0.1cm] (3) [right = 0.13cm of 2] {};
\node[fill=black, circle, inner sep=1pt, minimum size=0.1cm] (4) [below = 0.13cm of 3] {};
\draw (1)--(2)--(3)--(4);
\end{tikzpicture}}
\newcommand{\lupthree}[4]{
\begin{tikzpicture}[baseline={([yshift=-.5ex]current bounding box.center)}]
\node[fill=black, circle, inner sep=1pt, minimum size=0.1cm, label=left:\footnotesize{#1}] (1) {};
\node[fill=black, circle, inner sep=1pt, minimum size=0.1cm,label=left:\footnotesize{#2}] (2) [above = 0.3cm of 1] {};
\node[fill=black, circle, inner sep=1pt, minimum size=0.1cm,label=right:\footnotesize{#3}] (3) [right = 0.3cm of 2] {};
\node[fill=black, circle, inner sep=1pt, minimum size=0.1cm,label=right:\footnotesize{#4}] (4) [below = 0.3cm of 3] {};
\draw (1)--(2)--(3)--(4);
\end{tikzpicture}}
\newcommand{\pfour}[5]{
\begin{tikzpicture}[baseline={([yshift=-.5ex]current bounding box.center)}]
\node[fill=black, circle, inner sep=1pt, minimum size=0.1cm, label=below:\footnotesize{#1}] (1) {};
\node[fill=black, circle, inner sep=1pt, minimum size=0.1cm, label=above:\footnotesize{#2}] (2) [above = 0.3cm of 1] {};
\node[fill=black, circle, inner sep=1pt, minimum size=0.1cm, label=above:\footnotesize{#3}] (3) [right = 0.3cm of 2] {};
\node[fill=black, circle, inner sep=1pt, minimum size=0.1cm, label=below:\footnotesize{#4}] (4) [below = 0.3cm of 3] {};
\node[fill=black, circle, inner sep=1pt, minimum size=0.1cm, label=below:\footnotesize{#5}] (5) [right = 0.3cm of 4] {};
\draw (1)--(2)--(3)--(4)--(5);
\end{tikzpicture}}
\newcommand{\upfour}[0]{
\begin{tikzpicture}[baseline={([yshift=-.5ex]current bounding box.center)}]
\node[fill=black, circle, inner sep=1pt, minimum size=0.1cm] (1) {};
\node[fill=black, circle, inner sep=1pt, minimum size=0.1cm] (2) [above = 0.3cm of 1] {};
\node[fill=black, circle, inner sep=1pt, minimum size=0.1cm] (3) [right = 0.3cm of 2] {};
\node[fill=black, circle, inner sep=1pt, minimum size=0.1cm] (4) [below = 0.3cm of 3] {};
\node[fill=black, circle, inner sep=1pt, minimum size=0.1cm] (5) [right = 0.3cm of 4] {};
\draw (1)--(2)--(3)--(4)--(5);
\end{tikzpicture}}
\newcommand{\smallupfour}[0]{
\begin{tikzpicture}[baseline={([yshift=-.5ex]current bounding box.north)}]
\node[fill=black, circle, inner sep=1pt, minimum size=0.1cm] (1) {};
\node[fill=black, circle, inner sep=1pt, minimum size=0.1cm] (2) [above = 0.13cm of 1] {};
\node[fill=black, circle, inner sep=1pt, minimum size=0.1cm] (3) [right = 0.13cm of 2] {};
\node[fill=black, circle, inner sep=1pt, minimum size=0.1cm] (4) [below = 0.13cm of 3] {};
\node[fill=black, circle, inner sep=1pt, minimum size=0.1cm] (5) [right = 0.13cm of 4] {};
\draw (1)--(2)--(3)--(4)--(5);
\end{tikzpicture}}
\newcommand{\upfive}[0]{
\begin{tikzpicture}[baseline={([yshift=-.5ex]current bounding box.center)}]
\node[fill=black, circle, inner sep=1pt, minimum size=0.1cm] (1) {};
\node[fill=black, circle, inner sep=1pt, minimum size=0.1cm] (2) [above = 0.3cm of 1] {};
\node[fill=black, circle, inner sep=1pt, minimum size=0.1cm] (3) [right = 0.3cm of 2] {};
\node[fill=black, circle, inner sep=1pt, minimum size=0.1cm] (4) [below = 0.3cm of 3] {};
\node[fill=black, circle, inner sep=1pt, minimum size=0.1cm] (5) [right = 0.3cm of 4] {};
\node[fill=black, circle, inner sep=1pt, minimum size=0.1cm] (6) [above = 0.3cm of 5] {};
\draw (1)--(2)--(3)--(4)--(5)--(6);
\end{tikzpicture}}
\newcommand{\cfour}[4]{
\begin{tikzpicture}[baseline={([yshift=-.5ex]current bounding box.center)}]
\node[fill=black, circle, inner sep=1pt, minimum size=0.1cm, label=left:\footnotesize{#1}] (1) {};
\node[fill=black, circle, inner sep=1pt, minimum size=0.1cm, label=right:\footnotesize{#2}] (2) [above = 0.3cm of 1] {};
\node[fill=black, circle, inner sep=1pt, minimum size=0.1cm, label=right:\footnotesize{#3}] (3) [right = 0.3cm of 2] {};
\node[fill=black, circle, inner sep=1pt, minimum size=0.1cm, label=left:\footnotesize{#4}] (4) [below = 0.3cm of 3] {};
\draw (1)--(2)--(3)--(4)--(1);
\end{tikzpicture}}
\newcommand{\ucfour}[0]{
\begin{tikzpicture}[baseline={([yshift=-.5ex]current bounding box.center)}]
\node[fill=black, circle, inner sep=1pt, minimum size=0.1cm] (1) {};
\node[fill=black, circle, inner sep=1pt, minimum size=0.1cm] (2) [above = 0.3cm of 1] {};
\node[fill=black, circle, inner sep=1pt, minimum size=0.1cm] (3) [right = 0.3cm of 2] {};
\node[fill=black, circle, inner sep=1pt, minimum size=0.1cm] (4) [below = 0.3cm of 3] {};
\draw (1)--(2)--(3)--(4)--(1);
\end{tikzpicture}}
\newcommand{\teletubby}[4]{
\begin{tikzpicture}[baseline={([yshift=-.5ex]current bounding box.center)}]
\node[fill=black, circle, inner sep=1pt, minimum size=0.1cm, label=right:\footnotesize{#1}] (1) {};
\node[fill=black, circle, inner sep=1pt, minimum size=0.1cm, label=left:\footnotesize{#2}] (2) [above = 0.3cm of 1] {};
\node[fill=black, circle, inner sep=1pt, minimum size=0.1cm, label=left:\footnotesize{#3}] (3) [below left = 0.3cm of 1] {};
\node[fill=black, circle, inner sep=1pt, minimum size=0.1cm,label=right:\footnotesize{#4}] (4) [below right = 0.3cm of 1] {};
\draw (2)--(1)--(3)--(1)--(4);
\draw (3)--(4);
\end{tikzpicture}}
\newcommand{\uteletubby}[0]{
\begin{tikzpicture}[baseline={([yshift=-.5ex]current bounding box.center)}]
\node[fill=black, circle, inner sep=1pt, minimum size=0.1cm] (1) {};
\node[fill=black, circle, inner sep=1pt, minimum size=0.1cm] (2) [above = 0.3cm of 1] {};
\node[fill=black, circle, inner sep=1pt, minimum size=0.1cm] (3) [below left = 0.3cm of 1] {};
\node[fill=black, circle, inner sep=1pt, minimum size=0.1cm] (4) [below right = 0.3cm of 1] {};
\draw (2)--(1)--(3)--(1)--(4);
\draw (3)--(4);
\end{tikzpicture}}
\newcommand{\uteletubbytwo}[0]{
\begin{tikzpicture}[baseline={([yshift=-.5ex]current bounding box.center)}]
\node[fill=black, circle, inner sep=1pt, minimum size=0.1cm] (1) {};
\node[fill=black, circle, inner sep=1pt, minimum size=0.1cm] (2) [above left= 0.3cm of 1] {};
\node[fill=black, circle, inner sep=1pt, minimum size=0.1cm] (6) [above right= 0.3cm of 1] {};
\node[fill=black, circle, inner sep=1pt, minimum size=0.1cm] (3) [below left = 0.3cm of 1] {};
\node[fill=black, circle, inner sep=1pt, minimum size=0.1cm] (4) [below right = 0.3cm of 1] {};
\draw (2)--(1)--(3)--(1)--(4);
\draw (3)--(4);
\draw (1)--(6);
\end{tikzpicture}}
\newcommand{\ufivestar}[0]{
\begin{tikzpicture}[baseline={([yshift=-.5ex]current bounding box.center)}]
\node[fill=black, circle, inner sep=1pt, minimum size=0.1cm] (1) at (0,0) {};
\node[fill=black, circle, inner sep=1pt, minimum size=0.1cm] (2) at ($(1)+(0:0.3cm)$) {};
\node[fill=black, circle, inner sep=1pt, minimum size=0.1cm] (3) at ($(1)+(72:0.3cm)$) {};
\node[fill=black, circle, inner sep=1pt, minimum size=0.1cm] (4) at ($(1)+(144:0.3cm)$) {};
\node[fill=black, circle, inner sep=1pt, minimum size=0.1cm] (5) at ($(1)+(216:0.3cm)$) {};
\node[fill=black, circle, inner sep=1pt, minimum size=0.1cm] (6) at ($(1)+(288:0.3cm)$) {};
\draw (2)--(1)--(3);
\draw (4)--(1)--(5);
\draw (1)--(6);
\end{tikzpicture}}
\newcommand{\ucfive}[0]{
\begin{tikzpicture}[baseline={([yshift=-.5ex]current bounding box.center)}]
\node[fill=none, draw=none, circle, inner sep=1pt, minimum size=0.1cm] (1) at (0,0) {};
\node[fill=black, circle, inner sep=1pt, minimum size=0.1cm] (2) at ($(1)+(0:0.3cm)$) {};
\node[fill=black, circle, inner sep=1pt, minimum size=0.1cm] (3) at ($(1)+(72:0.3cm)$) {};
\node[fill=black, circle, inner sep=1pt, minimum size=0.1cm] (4) at ($(1)+(144:0.3cm)$) {};
\node[fill=black, circle, inner sep=1pt, minimum size=0.1cm] (5) at ($(1)+(216:0.3cm)$) {};
\node[fill=black, circle, inner sep=1pt, minimum size=0.1cm] (6) at ($(1)+(288:0.3cm)$) {};
\draw (2)--(3)--(4)--(5)--(6)--(2);
\end{tikzpicture}}
\newcommand{\uweights}[0]{
\begin{tikzpicture}[baseline={([yshift=-.5ex]current bounding box.center)}]
\node[fill=black, circle, inner sep=1pt, minimum size=0.1cm, label] (1) {};
\node[fill=black, circle, inner sep=1pt, minimum size=0.1cm, label] (2) [above left= 0.3cm of 1] {};
\node[fill=black, circle, inner sep=1pt, minimum size=0.1cm, label] (3) [below left = 0.3cm of 1] {};
\node[fill=black, circle, inner sep=1pt, minimum size=0.1cm,label] (4) [right = 0.3cm of 1] {};
\node[fill=black, circle, inner sep=1pt, minimum size=0.1cm,label] (5) [above right = 0.3cm of 4] {};
\node[fill=black, circle, inner sep=1pt, minimum size=0.1cm,label] (6) [below right = 0.3cm of 4] {};
\draw (2)--(1)--(3)--(1)--(4)--(5);
\draw (4)--(6);
\end{tikzpicture}}
\newcommand{\ulongbroom}[0]{
\begin{tikzpicture}[baseline={([yshift=-.5ex]current bounding box.center)}]
\node[fill=black, circle, inner sep=1pt, minimum size=0.1cm, label] (1) {};
\node[fill=black, circle, inner sep=1pt, minimum size=0.1cm, label] (2) [above left= 0.3cm of 1] {};
\node[fill=black, circle, inner sep=1pt, minimum size=0.1cm, label] (3) [below left = 0.3cm of 1] {};
\node[fill=black, circle, inner sep=1pt, minimum size=0.1cm,label] (4) [right = 0.3cm of 1] {};
\node[fill=black, circle, inner sep=1pt, minimum size=0.1cm,label] (5) [right = 0.3cm of 4] {};
\node[fill=black, circle, inner sep=1pt, minimum size=0.1cm,label] (6) [right = 0.3cm of 5] {};
\draw (2)--(1)--(3)--(1)--(4)--(5)--(6);
\end{tikzpicture}}
\newcommand{\smallulongbroom}[0]{
\begin{tikzpicture}[baseline={([yshift=-.5ex]current bounding box.center)}]
\node[fill=black, circle, inner sep=1pt, minimum size=0.1cm, label] (1) {};
\node[fill=black, circle, inner sep=1pt, minimum size=0.1cm, label] (2) [above right= 0.13cm of 1] {};
\node[ fill=black, circle, inner sep=1pt, minimum size=0.1cm, label] (3) [above left = 0.13cm of 2] {};
\node[fill=black, circle, inner sep=1pt, minimum size=0.1cm,label] (4) [right = 0.13cm of 2] {};
\node[fill=black, circle, inner sep=1pt, minimum size=0.1cm,label] (5) [right = 0.13cm of 4] {};
\node[fill=black, circle, inner sep=1pt, minimum size=0.1cm,label] (6) [right = 0.13cm of 5] {};
\draw (1)--(2)--(3)--(2)--(4)--(5)--(6);
\end{tikzpicture}}
\newcommand{\uthreebroom}[0]{
\begin{tikzpicture}[baseline={([yshift=-.5ex]current bounding box.center)}]
\node[fill=black, circle, inner sep=1pt, minimum size=0.1cm, label] (1) {};
\node[fill=black, circle, inner sep=1pt, minimum size=0.1cm, label] (2) [above left= 0.3cm of 1] {};
\node[fill=black, circle, inner sep=1pt, minimum size=0.1cm, label] (3) [below left = 0.3cm of 1] {};
\node[fill=black, circle, inner sep=1pt, minimum size=0.1cm,label] (4) [right = 0.3cm of 1] {};
\node[fill=black, circle, inner sep=1pt, minimum size=0.1cm,label] (5) [right = 0.3cm of 4] {};
\node[fill=black, circle, inner sep=1pt, minimum size=0.1cm,label] (6) [left = 0.3cm of 1] {};
\draw (2)--(1)--(3)--(1)--(4)--(5);
\draw (1)--(6);
\end{tikzpicture}}
\newcommand{\uchristmastree}[0]{
\begin{tikzpicture}[baseline={([yshift=-.5ex]current bounding box.center)}]
\node[fill=black, circle, inner sep=1pt, minimum size=0.1cm, label] (1) {};
\node[fill=black, circle, inner sep=1pt, minimum size=0.1cm, label] (2) [left= 0.3cm of 1] {};
\node[fill=black, circle, inner sep=1pt, minimum size=0.1cm, label] (3) [below right= 0.3cm of 1] {};
\node[fill=black, circle, inner sep=1pt, minimum size=0.1cm,label] (4) [above right = 0.3cm of 1] {};
\node[fill=black, circle, inner sep=1pt, minimum size=0.1cm,label] (5) [right = 0.3cm of 4] {};
\node[fill=black, circle, inner sep=1pt, minimum size=0.1cm,label] (6) [right = 0.3cm of 3] {};
\draw (2)--(1)--(3)--(1)--(4)--(5);
\draw (3)--(6);
\end{tikzpicture}}
\newcommand{\ucfourdiagonal}[0]{
\begin{tikzpicture}[baseline={([yshift=-.5ex]current bounding box.center)}]
\node[fill=black, circle, inner sep=1pt, minimum size=0.1cm] (1) {};
\node[fill=black, circle, inner sep=1pt, minimum size=0.1cm] (2) [above = 0.3cm of 1] {};
\node[fill=black, circle, inner sep=1pt, minimum size=0.1cm] (3) [right = 0.3cm of 2] {};
\node[fill=black, circle, inner sep=1pt, minimum size=0.1cm] (4) [below = 0.3cm of 3] {};
\draw (1)--(2)--(3)--(4)--(1)--(3);
\end{tikzpicture}}
\newcommand{\ubottomlesshouse}[0]{
\begin{tikzpicture}[baseline={([yshift=-.5ex]current bounding box.center)}]
\node[fill=black, circle, inner sep=1pt, minimum size=0.1cm] (1) {};
\node[fill=black, circle, inner sep=1pt, minimum size=0.1cm] (2) [below left = 0.3cm of 1] {};
\node[fill=black, circle, inner sep=1pt, minimum size=0.1cm] (3) [below right = 0.3cm of 1] {};
\node[fill=black, circle, inner sep=1pt, minimum size=0.1cm] (4) [below = 0.3cm of 2] {};
\node[fill=black, circle, inner sep=1pt, minimum size=0.1cm] (5) [below = 0.3cm of 3] {};
\draw (1)--(3)--(2)--(1);
\draw (2)--(4);
\draw (3)--(5);
\end{tikzpicture}}
\newcommand{\sidconj}[0]{
\begin{tikzpicture}[baseline={([yshift=-.5ex]current bounding box.center)}]
\tikzstyle{every node}=[fill=black, circle, inner sep=1pt, minimum size=0.1cm]
\draw (1*360/10: 1.5cm) node (1) [label=above right:$e$] {};
\draw (2*360/10: 1.5cm) node (2) [label=above:$d$] {};
\draw (3*360/10: 1.5cm) node (3) [label=above:$c$] {};
\draw (4*360/10: 1.5cm) node (4) [label=above left:$b$] {};
\draw (5*360/10: 1.5cm) node (5) [label=left:$a$] {};
\draw (6*360/10: 1.5cm) node (6) [label=below left:$j$] {};
\draw (7*360/10: 1.5cm) node (7) [label=below:$i$] {};
\draw (8*360/10: 1.5cm) node (8) [label=below:$h$] {};
\draw (9*360/10: 1.5cm) node (9) [label=below right:$g$] {};
\draw (10*360/10: 1.5cm) node (10) [label=right:$f$] {};
\draw (1)--(2)--(3)--(4)--(5)--(6)--(7)--(8)--(9)--(10)--(1);
\draw (1)--(6);
\draw (2)--(7);
\draw (3)--(8);
\draw (4)--(9);
\draw (5)--(10);
\end{tikzpicture}}
\newcommand{\bottomlesshousepartone}[4]{
\begin{tikzpicture}[baseline={([yshift=-.5ex]current bounding box.center)}]
\node[fill=black, circle, inner sep=1pt, minimum size=0.1cm, label=right:\footnotesize{#1}] (1) {};
\node[fill=black, circle, inner sep=1pt, minimum size=0.1cm, label=left:\footnotesize{#2}] (2) [below left = 0.3cm of 1] {};
\node[fill=black, circle, inner sep=1pt, minimum size=0.1cm, label=right:\footnotesize{#3}] (3) [below right = 0.3cm of 1] {};
\node[fill=black, circle, inner sep=1pt, minimum size=0.1cm, label=left:\footnotesize{#4}] (4) [below = 0.3cm of 2] {};
\draw (3)--(2)--(1);
\draw (2)--(4);
\end{tikzpicture}}
\newcommand{\bottomlesshouseparttwo}[3]{
\begin{tikzpicture}[baseline={([yshift=-.5ex]current bounding box.center)}]
\node[fill=black, circle, inner sep=1pt, minimum size=0.1cm, label=right:\footnotesize{#1}] (1) {};
\node[fill=black, circle, inner sep=1pt, minimum size=0.1cm, label=right:\footnotesize{#2}] (3) [below right = 0.3cm of 1] {};
\node[fill=black, circle, inner sep=1pt, minimum size=0.1cm, label=right:\footnotesize{#3}] (5) [below = 0.3cm of 3] {};
\draw (1)--(3)--(5);
\end{tikzpicture}}
\newcommand{\bottomlesshouse}[5]{
\begin{tikzpicture}[baseline={([yshift=-.5ex]current bounding box.center)}]
\node[fill=black, circle, inner sep=1pt, minimum size=0.1cm, label=right:\footnotesize{#1}] (1) {};
\node[fill=black, circle, inner sep=1pt, minimum size=0.1cm, label=left:\footnotesize{#2}] (2) [below left = 0.3cm of 1] {};
\node[fill=black, circle, inner sep=1pt, minimum size=0.1cm, label=right:\footnotesize{#3}] (3) [below right = 0.3cm of 1] {};
\node[fill=black, circle, inner sep=1pt, minimum size=0.1cm, label=left:\footnotesize{#4}] (4) [below = 0.3cm of 2] {};
\node[fill=black, circle, inner sep=1pt, minimum size=0.1cm, label=right:\footnotesize{#5}] (5) [below = 0.3cm of 3] {};
\draw (1)--(3)--(2)--(1);
\draw (2)--(4);
\draw (3)--(5);
\end{tikzpicture}}
\date{\today}
\title{Tropicalization of Graph Profiles}
\thanks{Grigoriy Blekherman was partially supported by NSF grant DMS-1352073.  Annie Raymond was partially supported by NSF grant DMS-2054404. Mohit Singh was partially supported by NSF grant CCF-1717947. Rekha Thomas was partially supported by NSF grant DMS-1719538.}
\author{Grigoriy Blekherman}
\address{School of Mathematics, Georgia Institute of Technology,
686 Cherry Street
Atlanta, GA 30332}\email{greg@math.gatech.edu}
\author{Annie Raymond}
\address{Department of Mathematics and Statistics,
Lederle Graduate Research Tower, 1623D,
University of Massachusetts Amherst
710 N. Pleasant Street
Amherst, MA 01003} \email{raymond@math.umass.edu}
\author{Mohit Singh}
\address{H. Milton Stewart School of
Industrial and Systems Engineering, Georgia Institute of Technology,
755 Ferst Drive, NW, Atlanta, GA 30332}
\email{mohitsinghr@gmail.com}
\author{Rekha R. Thomas}
\address{Department of Mathematics, University of Washington, Box
  354350, Seattle, WA 98195, USA} \email{rrthomas@uw.edu}
\begin{document}

\begin{abstract}
A graph profile records all possible densities of a fixed finite set of graphs. Profiles can be extremely complicated; for instance the full profile of any triple of connected graphs is not known, and little is known about hypergraph profiles. We introduce the \textit{tropicalization} of graph and hypergraph profiles. Tropicalization is a well-studied operation in algebraic geometry, which replaces a variety (the set of real or complex solutions to a finite set of algebraic equations) with its ``combinatorial shadow". We prove that the tropicalization of a graph profile is a closed convex cone, which still captures interesting combinatorial information. We explicitly compute these tropicalizations for arbitrary sets of complete and star hypergraphs. We show they are rational polyhedral cones even though the corresponding profiles are not even known to be semialgebraic in some of these cases. We then use tropicalization to prove strong restrictions on the power of the sums of squares method, equivalently Cauchy-Schwarz calculus, to test (which is weaker than certification) the validity of graph density inequalities. In particular, we show that sums of squares cannot test simple binomial graph density inequalities, or even their approximations. Small concrete examples of such inequalities are presented, and include the famous Blakley-Roy inequalities for paths of odd length. As a consequence, these simple inequalities cannot be written as a rational sum of squares of graph densities.
\end{abstract}

\maketitle


\section{Introduction}
An important tool in the study of very large graphs is to randomly sample a fixed number of small subgraphs. This methodology goes under various
names such as {\em property testing} \cite{GGRpropertytesting} and {\em subgraph sampling} \cite[Section 1.3.1]{LovaszBook}, and the sampling statistics are in terms of densities of the subgraphs in the given graph.
 There are various notions of densities. In this paper we focus on homomorphism densities, but we say a few words at the very end on consequences for other densities.
 A graph $G$ has vertex set $V(G)$ and edge set $E(G)$, and is assumed to be simple, without loops or multiple edges. The \emph{homomorphism density} of a graph $H$ in a graph $G$, denoted by $t(H;G)$, is the probability that a random map from $V(H)$ to $V(G)$ is a graph homomorphism, i.e., it maps every edge of $H$ to an edge of $G$.

The {\em graph profile} of a collection of connected graphs $\mathcal{U} = \{C_1, \ldots, C_s \}$, denoted as $\mathcal{G}_\mathcal{U}$,
 is the closure of the set of all vectors $(t(C_1;G), t(C_2;G), \ldots, t(C_s;G))$ as $G$ varies over all graphs.
 For example, the graph profile of $\mathcal{U} = \left\{ \uvedge, \,\, \uHthree \right\}$ is
the well-known set in $[0,1]^2$ shown in Figure~\ref{fig:edge-triangle-profile1} (slightly distorted to better show its features) \cite{RazTriangle}.

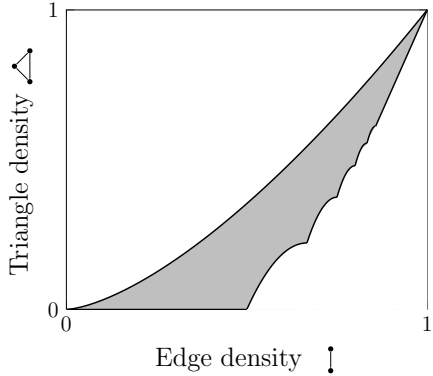
\begin{figure}[ht]
  \centering
\begin{tikzpicture}[scale=0.7]
\begin{axis}[xlabel=\large{Edge density \uvedge}, ylabel=\large{Triangle density \uHthree}, xmin=0, ymin=0, xmax=1, ymax=1, xtick={0,1}, ytick={0,1}, samples=50]
\addplot[fill=gray!50, draw=none, domain=0:1] {x^(3/2)} \closedcycle;
\addplot[thick, samples=50,domain=0:1] {x^(3/2)};
\addplot[fill=white, draw=none, domain=0:1] {-8*(x-2/3)^2+2/9} \closedcycle;
\addplot[thick, samples=30,domain=1/2:2/3] {-8*(x-2/3)^2+2/9};
\addplot[fill=white, draw=none, domain=0:1]    {-22*(x-3/4)^2 +0.375} \closedcycle;
\addplot[thick, samples=30,domain=2/3:3/4] {-22*(x-3/4)^2 +0.375};
\addplot[fill=white, draw=none, domain=0:1]   {-42*(x-4/5)^2+0.48} \closedcycle;
\addplot[thick, samples=30,domain=3/4:4/5] {-42*(x-4/5)^2+0.48};
\addplot[fill=white, draw=none, domain=0:1]  {-68*(x-5/6)^2+5/9} \closedcycle;
\addplot[thick, samples=30,domain=4/5:5/6] {-68*(x-5/6)^2+5/9};
\addplot[fill=white, draw=none, domain=0:1] {-100*(x-6/7)^2+0.6122}  \closedcycle;
\addplot[thick, samples=30,domain=5/6:6/7] {-100*(x-6/7)^2+0.6122};
\addplot[fill=white, draw=none, domain=0:1] {2.7146*x-1.7146} \closedcycle;
\addplot[thick, samples=30, domain=6/7:1] {2.7146*x-1.7146};
\addplot[thick, samples=30, domain=0:1/2] {0};
\end{axis}
\end{tikzpicture}
\caption{\label{fig:edge-triangle-profile1} The graph profile of edge and triangle.}
\end{figure}

Graph profiles are extremely complicated sets and they have been fully understood in very few cases. The study of graph profiles was initiated in \cite{MR538044}, where it was shown that a graph profile is a closed full-dimensional subset of $[0,1]^s$ for an arbitrary $s$-tuple of connected graphs. However to this day, there is no triple of connected graphs for which the graph profile is fully known.
For pairs of graphs, the profile $\mathcal{G}_\mathcal{U}$ for
$\mathcal{U}= \{ \uvedge, K_n\}$ where $K_n$ denotes the complete graph on $n$ vertices was determined first for $n=3$ in \cite{RazTriangle}, for $n=4$ in \cite{Nikiforov}, and for a general $n$ in \cite{MR3549620}. Determining the profile of $\{ \uvedge, H \}$ where $H$ is an arbitrary bipartite graph would involve resolving the famous {\em Sidorenko conjecture} which says $t(H;G) \geq t(\uvedge;G)^{|E(H)|}$. Despite considerable attention, this conjecture is only known for some classes of bipartite graphs~\cites{MR2738996,MR3456171,szegedy2014information,MR3893193}.
 Some two-dimensional projections of $\mathcal{G}_\mathcal{U}$ where
$\mathcal{U}=\{ \uvedge, \uHtwo , \uHthree\}, $ were described in \cites{MR3200287, glebov2016densities}.
As is evident from Figure~\ref{fig:edge-triangle-profile1}, graph profiles are not necessarily convex or semialgebraic sets.

In this paper we introduce the \emph{tropicalization} of graph profiles.  Tropicalization is a very well-studied operation in real and complex algebraic geometry, which replaces a variety (the set of real or complex solutions to a finite collection of algebraic equations) with its ``combinatorial shadow" \cite{MR2137980,MR3287221}. Tropicalization of real semialgebraic sets has not been explored in as much detail \cites{Alessandrini,allamigeon2020tropical, MR3314099}. As we describe below, while tropicalization loses a lot of information about a graph profile, it also keeps many of its interesting combinatorial properties.

\subsection{Tropicalization of graph profiles}
The first set of results in this paper show that even though $\mathcal{G}_\mathcal{U}$ can be very complicated \cite{MR3912211},
its tropicalization denoted as $\textup{trop}(\mathcal{G}_\mathcal{U})$ is, relatively speaking, rather simple.
For a set $\mathcal{S}  \subseteq \RR^s_{\geq 0}$, let $\log(\mathcal{S})$ denote the image of $\mathcal{S} \cap \RR^s_{>0}$ under the
map $\mathbf{v} \mapsto (\log_e v_1, \ldots, \log_e v_s)$. The tropicalization of $\mathcal{S}$, also known
as its {\em logarithmic limit set}, is
$$ \textup{trop}(\mathcal{S}) = \lim_{t \rightarrow 0} \log_{\frac{1}{t}} (\mathcal{S}).$$
It was shown in \cite{Alessandrini} that $\textup{trop}(\mathcal{S})$ is a closed cone, but it is not necessarily convex.
We prove in Theorem~\ref{thm:tropGU} that $\textup{trop}(\mathcal{G}_\mathcal{U})$
is a closed convex cone that coincides with the closure of the conical hull of $\log_e (\mathcal{G}_\mathcal{U})$.
This result extends beyond graph profiles to hypergraph profiles in Theorem~\ref{thm:tropHGU}, and in fact, to any set that has the
{\em Hadamard property}, namely that if $\bf{u}, \bf{v}$ are in the set then so is their coordinate-wise Hadamard product.

\medskip

{\em Hypergraph profiles.} In Section \ref{sec:examples} we compute the tropicalization of
an arbitrary $k$-tuple of complete graphs (and complete uniform hypergraphs) [Theorem \ref{lem:trop-clique}] , and an arbitrary $k$-tuple of star graphs (and uniform star hypergraphs) [Theorem \ref{lem:trop-stars}]. Both tropicalizations are rational polyhedral cones. This is in sharp contrast to the true profile of even any triple of such graphs being quite out of reach at the moment.

\medskip

{\em Binomial graph density inequalities.}
The cone $\textup{trop}(\mathcal{G}_\mathcal{U})$ provides a perfect framework in which to study {\em pure binomial
graph density inequalities} of the form $t(H_1;G) \geq t(H_2;G)$ where $H_1$ and $H_2$ are graphs whose connected components are contained in $\mathcal{U}=\{C_1,\dots,C_s\}$. 
Under the log map, the inequality $t(H_1;G) \geq t(H_2;G)$
becomes a linear inequality in the densities of its connected components.
The extreme rays of the dual cone $\operatorname{trop}(\mathcal{G}_\mathcal{U})^*$ generate all of the pure binomial inequalities valid on
$\mathcal{G}_{\mathcal{U}}$.
As mentioned already,  for a pair $\mathcal{U}=\{\uvedge, H\}$ where $H$ is bipartite,
$\textup{trop}(\mathcal{G}_\mathcal{U})$ captures the Sidorenko conjecture for $H$. In this case, $\textup{trop}(\mathcal{G}_\mathcal{U})$ is a two-dimensional cone in $\RR^2_{\leq 0}$, and one of its extreme rays for arbitrary $H$ is determined in \cite{blekherman2020threshold}.
Determining the other would resolve the Sidorenko conjecture.

\medskip

\subsection{Tropicalization and Sums of Squares}
Our next set of results uses tropicalization to show strong limitations for the {\em sums of squares (sos)} method, also known as
{\em Cauchy-Schwarz calculus} ~\cites{MR3002572,LovaszBook, MR2680226, Razborov07, MR3007147}, to prove graph density inequalities.

A finite $\RR$-linear combination of graphs $H_1, \ldots, H_s$, $a = \sum \alpha_i H_i$, is called a {\em graph combination}. The evaluation of a
graph combination $a$ on a graph $G$ is $a(G)=\sum \alpha_i t(H_i;G)$, and $a$ is said to be nonnegative, written as $a \geq 0$, if $\sum \alpha_i t(H_i;G) \geq 0$ for every $G$.
Equivalently, $a \geq 0$ on the graph profile $\mathcal{G}_\mathcal{U}$ where
$\mathcal{U} = \{C_1, \ldots, C_m\}$, where $C_1,\dots,C_m$ are the connected components of $H_1,\dots,H_s$. A graph combination $a$ is a {\em sum of squares (sos)} if $a = \sum [[a_j^2]]$ where $a_j$ is a graph combination of {\em partially labeled graphs}.

A natural certificate of nonnegativity of a graph combination is a sos expression for it, and {\em semidefinite programming}
can be used to search for a sos expression. It was shown in ~\cite{lovaszszegedy} that every true inequality between homomorphism
densities is a limit of Cauchy-Schwarz (sos) inequalities. Problem 17 in \cite{LovaszOpenProblems} asked whether every nonnegative
graph combination is a sos and in particular, whether the {\em Blakley-Roy inequalities}, $P_k \geq  \uvedge^k$,
where $P_k$ is a path of odd length $k$, can be certified by sos. Problem 21 in \cite{LovaszOpenProblems} asked whether every
nonnegative graph combination $a$ can be multiplied by a combination of the form $(1+b)$ where $b$ is sos so that the product is sos.
This would certify the nonnegativity of $a$.

It was shown in \cite{HN11} that the problem of verifying the validity of a polynomial inequality (or equivalently of a linear inequality) between
homomorphism densities is undecidable.
Moreover, they gave explicit examples of nonnegative graph combinations that are not sos answering the first part of Problem 17 in \cite{LovaszOpenProblems}.
The class of graph combinations that become sos after multiplication with an sos
were called {\em rational sums of squares} in \cite{HN11} after Hilbert's 17th problem. The undecidability result was used to show that there exist nonnegative graph combinations that are not rational sos, thus also solving Problem 21, although no explicit example of such graph combinations was presented. In \cite{BRST18}, we found small explicit graph density inequalities
that cannot be written as a sos, and also do not become sos after multiplication by expressions of the
form $1+b$ where $b$ is sos.  A concrete instance of our results is the family of Blakley-Roy
inequalities, $P_k \geq \uvedge^k$, for odd $k$, answering Problem 21 and the second part of Problem 17 in \cite{LovaszOpenProblems}.

We introduce a new notion of \textit{sos-testable} graph combinations, which are more general than sums of squares and rational sums of squares. Roughly speaking, sos-testable graph combinations correspond to graph combinations whose nonnegativity can be recognized by sums of squares, although there is no explicit certificate of nonnegativity. We find large families of pure binomial graph density inequalities that are not sos-testable, and even their pure binomial
approximations remain not sos-testable. These families include Blakley-Roy inequalities for odd paths.

\medskip

{\em Sos profiles and sos-testable functions.}
For a fixed positive integer $d$, define the {\em $d$-sos-profile},  denoted as $\mathcal{S}_d$, to be the set of all
points on which all sos graph combinations $\sum [[a_j^2]]$, with all $a_j$ having at most $d$ edges in their constituent graphs, are nonnegative.
Let the $(\mathcal{U},d)$-sos profile $\mathcal{S}_{\mathcal{U},d}$ be the projection on $\mathcal{S}_d$ onto the graphs in $\mathcal{U}$. We prove that $\mathcal{S}_d$
is a basic, closed semialgebraic set, and that its tropicalization $\textup{trop}(\mathcal{S}_d)$
is a rational polyhedral cone described explicitly in Theorem \ref{thm:sos-2by2}.

A graph combination $a$ is {\em sos-testable} if it is nonnegative on $\mathcal{S}_d$ for some $d$.
Sos-testable functions do not have to come with an explicit certificate of nonnegativity on an sos-profile. However,
in principle, since $\mathcal{S}_d$ is a semialgebraic set, nonnegativity of a graph combination on $\mathcal{S}_d$ can be verified via real quantifier elimination.  We show in Theorem \ref{thm:mult} that if a graph combination $a$ becomes sos-testable after multiplication by an sos-testable graph combination $b$,
then $a$ was already sos-testable.
The class of sos-testable functions includes sums of squares and also rational sums of squares, but is quite likely significantly larger.
It is not clear at this point whether even rational sos is a bigger class than just sos.

\medskip

{\em Limitations of sos.} In Section~\ref{sec:limitations} we exhibit concrete families of binomial graph density inequalities that are not sos-testable, even approximately
(Theorem~\ref{thm:noksos}).  Namely, if $\oversl{H}$ and $\undersl{H}$ are two graphs with the same number of edges where $\oversl{H}$ is a \textit{trivial square} (see Section ~\ref{sec:limitations} for details) in which every vertex has degree $p$ or $p+1$, and the maximum degree in $\undersl{H}$ is at most $p+1$, then $\oversl{H}-\undersl{H}$ is not sos-testable. An example of such a binomial inequality would be $\upthree-\uvedge^3 \geq 0$ (and in fact all Blakley-Roy inequalities for odd paths).
Even more, $\oversl{H}^k-\undersl{H}^{k+1}$ is not sos-testable for $k \geq 2|E(\oversl{H})|+1$. In particular, $\upthree^7-\uvedge^{24}$ is not sos-testable and thus cannot be written as a rational sos.
The existence of non-sos-testable
graph density inequalities follow from the undecidability result in \cite{HN11}. However, they do not provide explicit examples, and our non-approximation results are new.

\medskip

Nonnegative graph combinations admit a {\em Positivstellensatz}: any graph combination strictly positive on a graph
profile $\mathcal{G}_{\mathcal{U}}$ is a
sos \cites{lovaszszegedy, MR3272089}. It follows from this that the graph profile $\mathcal{G}_{\mathcal{U}}$ is the intersection of
the $(\mathcal{U},d)$-sos profiles for all $d$:
$$\mathcal{G}_\mathcal{U} = \bigcap_d \mathcal{S}_{(\mathcal{U},d)}.$$

Perhaps surprisingly, tropicalizations of graph and sos-profiles behave rather differently in that
$\operatorname{trop} (\mathcal{S}_{(\mathcal{U},d)})$  need not approach $\operatorname{trop}( \mathcal{G}_{\mathcal{U}})$
as $d \rightarrow \infty$.
This is because tropicalizations of graph and sos-profiles only depend on
an arbitrarily small neighborhood of the origin in the original set by Lemma  \ref{lem:tropS}, and sets may approach each other, while their neighborhoods of the origin do not, see for instance Example \ref{ex:tropdifferent}.
This phenomenon plays an important role in our ability to use tropicalizations to find non-sos-testable functions.
It enables us to find graph density inequalities that are not valid on any $d$-sos-profile while being valid on $\mathcal{G}_\mathcal{U}$.

\subsection{Open Questions.}  We now state some open questions raised by our results. All tropicalizations of graph profiles computed in Section ~\ref{sec:examples} are rational polyhedral cones. Therefore it is natural to ask the following:

 \begin{question}
 Is  $\textup{trop}(\mathcal{G}_\mathcal{U})$ a polyhedral cone for any collection $\mathcal{U}$ of connected graphs? If yes, then is it necessarily a rational polyhedral cone?
 \end{question}

It was shown in \cite{HN11} that the problem of deciding the validity of a polynomial inequality (or equivalently of a linear inequality) between
homomorphism densities is undecidable. This is equivalent to saying that verifying the validity of a polynomial inequality on a graph profile is undecidable.
Tropicalizations of graph profiles only carry information about pure binomial inequalities, and tropicalizations appear to be simpler than the full profile. Therefore we ask the following:

 \begin{question}
 Given two (not necessarily connected) graphs  $G_1$ and $G_2$ is the question of whether $G_1-G_2\geq 0$ is a valid homomorphism density inequality decidable?
 \end{question}

 \noindent The above question is equivalent to understanding whether a given integer (or rational) point lies in the dual cone $\operatorname{trop}(G_{\mathcal{U}})^*$, where $\mathcal{U}$ is the set of connected components of $G_1$ and $G_2$.

\subsection{Organization of this paper} In Section~\ref{sec:GraphProfiles} we study the tropicalization $\textup{trop}(\mathcal{S})$
 of a set $\mathcal{S} \subseteq \RR^s_{\geq 0}$ with the Hadamard property.
We prove in Lemma~\ref{lem:tropS} that $\textup{trop}(\mathcal{S})$ is a closed convex cone that coincides with the
closure of the conical hull of $\log_e(\mathcal{S})$ and that if the all-ones vector is present in $\mathcal{S}$, then $\textup{trop}(\mathcal{S})$ also
coincides with the closure of the convex hull of $\log_e(\mathcal{S})$.
Theorem~\ref{thm:tropGU} applies these results to graph profiles, proving that
$\textup{trop}(\mathcal{G}_\mathcal{U})$ is a closed convex cone that coincides with both the conical and convex hull of $\log
(\mathcal{G}_\mathcal{U})$.

Lemma~\ref{lem:tropS} can also be applied to hypergraph profiles (Theorem~\ref{thm:tropHGU}). In Section~\ref{sec:examples} we compute the
tropicalizations of the profiles of an arbitrary number of hypergraphs from two families -- complete hypergraphs and star hypergraphs.
Both examples yield rational polyhedral cones that can be described explicitly. Their actual profiles are currently unknown.

We introduce the $d$-sos-profile $\mathcal{S}_d$  in Section~\ref{sec:SosProfiles} and prove that it is a basic closed semialgebraic set.
Theorem~\ref{thm:sos-2by2}  proves that $\textup{trop}(\mathcal{S}_d)$ is a rational polyhedral cone
whose inequalities can be described explicitly by the $2 \times 2$ principal minors of a symbolic matrix. Graph profiles are contained
in sos-profiles. This section also introduces the notion of
sos-testable graph density inequalities. We prove in Theorem~\ref{thm:mult}
 that if $b$ and $ab$ are sos-testable then so is $a$. In particular, if $a$ is not
sos-testable it is neither a sos nor a rational sos (Corollary~\ref{cor:not sostestable implies not rational sos}).

In Section~\ref{sec:limitations} we find explicit families of pure binomial inequalities that are not sos-testable, even approximately.


\section{Tropicalization of Graph Profiles}
\label{sec:GraphProfiles}

\begin{definition} Let $\mathcal{U} = \{C_1, \ldots, C_s\}$ be a collection of connected graphs.
The {\em graph profile} of $\mathcal{U}$, denoted as $\mathcal{G}_{\mathcal{U}}$, is the closure of the set of vectors
$(t(C_1;G), t(C_2;G), \cdots, t(C_s;G))$  as $G$ varies over all unlabeled graphs.
\end{definition}

For any $\mathcal{U}$, the graph profile $\mathcal{G}_\mathcal{U}$ is contained in $[0,1]^s$.
They are highly complicated objects and very few of them are known explicitly. Note from Figure~\ref{fig:edge-triangle-profile1}
that neither the graph profile, nor its convex hull, may be semialgebraic.

In this section we use tropical geometry
to pass from the complicated graph profile  $\mathcal{G}_\mathcal{U}$ to its {\em tropicalization},
which as we will see is a cone, and hence much easier to understand.

Let $\log \,:\, \RR^s_{>0} \rightarrow \RR^s$ be defined as $\log({\bf v}) := (\log_e(v_1), \ldots, \log_e(v_s))$. If we need to change the
base of the log from $e$ to $\alpha$ then we will explicitly write $\log_\alpha$.
For a set $\mathcal{S} \subseteq \RR^s_{\geq 0}$ we define $\log(\mathcal{S}) := \log (\mathcal{S} \cap \RR^s_{>0})$.
For any set $\mathcal{S} \subseteq \RR_{\geq 0}$ and $\tau \in (0,1)$ consider
$$ \log_{\frac{1}{\tau}}(\mathcal{S}) =
\frac{-1}{\log_e \tau} \log_e(\mathcal{S}). $$
The tropicalization of $\mathcal{S}$, which is also called the {\em logarithmic limit set} of $\mathcal{S}$, is
$$\textup{trop}(\mathcal{S}) := \lim_{\tau \rightarrow 0} \log_{\frac{1}{\tau}}(\mathcal{S}).$$
By Proposition~2.2 \cite{Alessandrini}, $\textup{trop}(\mathcal{S})$ is a closed cone in $\RR^s$. A working definition of what it
means for a point ${\bf y}$ to lie in $\textup{trop}(\mathcal{S})$ is that 
for a sequence $\tau_k\in (0, \epsilon)$  indexed by $k \in \NN$  converging to $0$ 
(equivalently, any such sequence), 
there exists a sequence ${\bf y}(k) \in \mathcal{S} \cap \RR^s_{>0}$ such that 
$\log_{\frac{1}{\tau_k}} {\bf y}(k) \rightarrow {\bf y}$ as $\tau_k \rightarrow 0$ ~\cite[Proposition~2.1]{Alessandrini}. In other words, $\textup{trop}(\mathcal{S})$ consists of all accumulations points under the map $\log_{1/\tau_k}$ applied to $\mathcal{S} \cap \RR^s_{>0}$ for all possible choices of sequences of bases $\tau_k$.
Note that since the $\log$ map is only defined on positive points, $\log_{\frac{1}{\tau_k}} {\bf y}(k)$ can exist only if ${\bf y}(k) \in \RR^s_{>0}$. For sets $\mathcal{S}\subseteq  \mathbb{R}^s_{\geq 0}$ such that $\mathcal{S}=\textup{cl}(\mathcal{S}\cap \mathbb{R}^s_{>0})$, $\textup{trop}(\mathcal{S})$ does not lose information carried by the points with zero coordinates in $\mathcal{S}$. 
The {\em Hadamard product} of ${\bf v}, {\bf w} \in \RR^s$ is defined to be
${\bf v} \cdot {\bf w} = (v_1w_1, \ldots, v_sw_s)$.
We say that a set $\mathcal{S} \subseteq \RR^s_{\geq 0}$ has the {\em Hadamard property} if for any two vectors ${\bf v}, {\bf w} \in \mathcal{S}$,
${\bf v} \cdot {\bf w} \in \mathcal{S}$. For a vector ${\bf v}$ and a positive integer $k$, define the $k$th power of ${\bf v}$ to be
${\bf v}^k := (v_1^k, \ldots, v_s^k)$.

\begin{lemma} \label{lem:tropS}
Suppose $\mathcal{S} \subseteq \RR_{\geq 0}^s$ has the Hadamard property. Then
\begin{enumerate}
\item $\textup{trop}(\mathcal{S})$ is the closure of the union of all the rays
from the origin through points in $\log(\mathcal{S})$.
\item $\textup{trop}(\mathcal{S})$ is a closed convex cone and $\textup{trop}(\mathcal{S}) = \textup{cl}(\textup{cone}(\log (\mathcal{S})))$, where $\textup{cone}(\cdot)$ denotes the conical hull.
\item If ${\bf 1} \in \mathcal{S}$, $\textup{trop}(\mathcal{S}) = \textup{cl}(\textup{conv}(\log (\mathcal{S})))$.
\item If $\mathcal{S} \subseteq [0,1]^s$ and $\mathcal{S}=\textup{cl}(\mathcal{S}\cap [0,1)^s)$, then for any $\epsilon > 0$,
$\textup{trop}(\mathcal{S})$ is determined by the nonempty neighborhood
$\mathcal{S} \cap B({\bf 0},\epsilon)$ of $\mathcal{S}$, where $B({\bf 0},\epsilon)$ is the ball with center ${\bf 0}$ and radius $\epsilon$. 
\end{enumerate}
\end{lemma}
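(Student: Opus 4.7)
The plan is to leverage the Hadamard property, which under $\log$ becomes the semigroup property $\log(\mathcal{S}) + \log(\mathcal{S}) \subseteq \log(\mathcal{S})$, together with the fact that $\log_{1/\tau}$ is the map $\log$ rescaled by the positive factor $-1/\log\tau$ which tends to $0$ as $\tau \to 0^+$. These two ingredients jointly supply the addition and positive scaling needed to build a closed convex cone, and I will take the four parts in order.

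For (1), I would fix $\mathbf{v} \in \mathcal{S}\cap\RR^s_{>0}$ and $c \geq 0$, and show $c\log(\mathbf{v}) \in \textup{trop}(\mathcal{S})$ by choosing, for any sequence $\tau_k \to 0^+$, positive integers $n_k$ so that $n_k/(-\log\tau_k)\to c$. Then $\mathbf{v}^{n_k} \in \mathcal{S}\cap\RR^s_{>0}$ by Hadamard, and $\log_{1/\tau_k}(\mathbf{v}^{n_k}) = (n_k/(-\log\tau_k))\log(\mathbf{v}) \to c\log(\mathbf{v})$. Conversely, every point of $\textup{trop}(\mathcal{S})$ is by definition a limit of $\log_{1/\tau_k}\mathbf{y}(k) = (-1/\log\tau_k)\log\mathbf{y}(k)$, a nonnegative scalar multiple of a point in $\log(\mathcal{S})$, which gives the ray description.

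For (2), the Hadamard property makes $\log(\mathcal{S})$ closed under addition, so any rational conic combination $\tfrac{1}{N}\sum p_i\log(\mathbf{v}_i) = \tfrac{1}{N}\log\bigl(\prod \mathbf{v}_i^{p_i}\bigr)$ is a positive scalar multiple of $\log(\mathbf{u})$ for some $\mathbf{u} = \prod\mathbf{v}_i^{p_i} \in \mathcal{S}$, and hence lies in $\textup{trop}(\mathcal{S})$ by (1); approximating real coefficients by rationals and taking closures then gives $\textup{cl}(\textup{cone}(\log(\mathcal{S}))) \subseteq \textup{trop}(\mathcal{S})$, with the reverse inclusion immediate from (1). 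For (3), when $\mathbf{0} = \log(\mathbf{1}) \in \log(\mathcal{S})$, the same rational conic combination can also be written as the convex combination $\tfrac{1}{N}\log\bigl(\prod \mathbf{v}_i^{p_i}\bigr) + (1-\tfrac{1}{N})\mathbf{0}$ for $N \geq 1$, placing it in $\textup{conv}(\log(\mathcal{S}))$; density plus the trivial inclusion $\textup{conv}\subseteq\textup{cone}$ then upgrades the equality in (2) to the equality in (3).

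For (4), I will show $\textup{trop}(\mathcal{S}) = \textup{trop}(\mathcal{S}\cap B(\mathbf{0},\epsilon))$. The inclusion $\supseteq$ is automatic from monotonicity of $\textup{trop}$. For $\subseteq$, by (1) it suffices to show every ray through a point of $\log(\mathcal{S})$ lies in $\textup{trop}(\mathcal{S}\cap B(\mathbf{0},\epsilon))$: when $\mathbf{v} \in \mathcal{S}\cap(0,1)^s$ the Hadamard powers $\mathbf{v}^k \in \mathcal{S}$ tend to $\mathbf{0}$, so $\mathbf{v}^k \in \mathcal{S}\cap B(\mathbf{0},\epsilon)$ eventually, and the ray through $\log(\mathbf{v}^k) = k\log(\mathbf{v})$ coincides with the ray through $\log(\mathbf{v})$. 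The main obstacle is the case where $\mathbf{v} \in \mathcal{S}\cap\RR^s_{>0}$ has some $v_i = 1$, since then its Hadamard powers do not accumulate at the origin; the hypothesis $\mathcal{S} = \textup{cl}(\mathcal{S}\cap[0,1)^s)$ is precisely what lets us approximate such $\mathbf{v}$ by $\mathbf{v}_n \in \mathcal{S}\cap[0,1)^s$, and strict positivity of every $v_i$ forces $\mathbf{v}_n \in (0,1)^s$ for $n$ large, so $\log(\mathbf{v}_n) \to \log(\mathbf{v})$ and closedness of $\textup{trop}$ transfers the ray from the interior case to the general one.
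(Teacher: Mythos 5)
Your proposal is correct and follows essentially the same route as the paper: parts (1)--(3) use the identical Hadamard-power trick (namely that $\log_{e^k}(\mathbf{v}^k)=\log_e(\mathbf{v})$, and that $\log(\mathcal{S})$ is closed under addition), organized only slightly differently. For part (4), you prove the set equality $\trop(\mathcal{S})=\trop(\mathcal{S}\cap B(\mathbf{0},\epsilon))$ directly by pushing rays into the neighborhood via Hadamard powers, while the paper phrases the same mechanism dually in terms of valid binomial inequalities; this is a cosmetic rather than a substantive difference, and your explicit handling of the boundary case $v_i=1$ via the hypothesis $\mathcal{S}=\textup{cl}(\mathcal{S}\cap[0,1)^s)$ spells out a step the paper leaves terse.
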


\begin{proof}
\begin{enumerate}

\item We first show that $\textup{trop}(\mathcal{S})$ is contained in the closure of the union of all the rays from the origin through points in $\log(\mathcal{S})$. Suppose ${\bf y} \in \textup{trop}(\mathcal{S})$. Then there exists sequences ${\bf y}(k) \in \mathcal{S} \cap \RR^s_{>0}$ and $\tau_k \in (0, \epsilon)$ such that as $\tau_k \rightarrow 0$,
$$\log_{\frac{1}{\tau_k}} {\bf y}(k) = \frac{-1}{\log \tau_k} \log {\bf y}(k)  \rightarrow {\bf y}.$$ Since
$\log {\bf y}(k) \in \log(\mathcal{S})$ and $ \frac{-1}{\log \tau_k} > 0$, we get that
$\log_{\frac{1}{\tau_k}} {\bf y}(k)$ is in the union of all the rays from the origin through points in $\log(\mathcal{S})$ for all $k$, and hence, ${\bf y}$ is in their closure.

For the other inclusion, we want to show that, for any ${\bf v}\in \mathcal{S}$, the ray generated by $\log({\bf v})$ is in $\trop(\mathcal{S})$. Since $\trop(\mathcal{S})$ is a cone and changing bases just rescales $\log({\bf v})$, it suffices to show that $\log_e(\mathbf{v})\in \trop(\mathcal{S})$.   Note that $\log_{e} ({\bf v}) = \log_{e^k} ({\bf v}^k)$ for any $k\in \mathbb{N}$ and  since $\mathcal{S}$ has the Hadamard property, ${\bf v}^k$ is also in $\mathcal{S}$ for all positive integers $k$.  Thus there are sequences $\tau_k:=\frac{1}{e^k}$ and ${\bf v}(k):={\bf v}^k$ where $\tau_k \rightarrow 0$ and $\log_{\frac{1}{\tau_k}} ({\bf v}(k)) = \log_{e} ({\bf v})$, and so $\log_{e} ({\bf v}) \in \textup{trop}(\mathcal{S})$.
Since $\textup{trop}(\mathcal{S})$ is closed, we can conclude that the closure of the union of all the rays
from the origin through points in $\log(\mathcal{S})$ is contained in
$\textup{trop}(\mathcal{S})$.

\item
We already know that $\textup{trop}(\mathcal{S})$ is closed. We now show that it is a convex cone. By (1), it suffices to show that for any $\log({\bf v}), \log({\bf w})\in \log(\mathcal{S})$,  $\alpha \log({\bf v}) + \beta \log({\bf w}) \in \trop(\mathcal{S})$ for any $\alpha, \beta \geq 0$. We can assume that $\alpha, \beta \in \mathbb{Q}$ since $\trop(\mathcal{S})$ is closed, and so we can further assume that $\alpha, \beta$ have the same denominator, say $\alpha=\frac{a_1}{b}$ and $\beta=\frac{a_2}{b}$ where $a_1, a_2, b\in \mathbb{N}$. From (1), we only need to show that $a_1 \log({\bf v})+ a_2 \log({\bf w})\in \trop(\mathcal{S})$. This is equal to $\log({\bf v}^{a_1}\cdot {\bf w}^{a_2})$ which, by the Hadamard property, is contained in $\log(\mathcal{S})$, and, from (1),  thus also in the tropicalization.

From (1), we know that $\textup{trop}(\mathcal{S})$ is contained in the closure of the union of all the rays
from the origin through points in $\log(\mathcal{S})$, which is contained in $\textup{cl}(\textup{cone}(\log (\mathcal{S})))$. To show the reverse inclusion, by (1), we know $\trop(\mathcal{S})\supseteq \log(\mathcal{S})$, and since $\textup{trop}(\mathcal{S})$ is a closed convex cone, $\trop(\mathcal{S}) \supseteq \textup{cl}(\textup{cone}(\log (\mathcal{S})))$, and the result holds.

\item If ${\bf 1} \in \mathcal{S}$, then ${\bf 0} = \log {\bf 1} \in \log(\mathcal{S})$. We already saw that all positive integer multiples of
a point in $\log({\mathcal S})$ is also in $\log({\mathcal S})$. Together these facts imply that
$\textup{cl}(\textup{conv}(\log (\mathcal{S}))) = \textup{cl}(\textup{cone}(\log (\mathcal{S}))) = \textup{trop}(\mathcal{S})$.

\item Since $\textup{trop}(\mathcal{S})$ is
a closed convex cone, it is fully described by the linear inequalities that are valid on it.
A linear inequality ${\bf a}^\top{\bf y}   \geq {\bf b}^\top {\bf y}$
valid on $\log(\mathcal{S})$ corresponds to the binomial inequality ${\bf x}^{\bf a} \geq {\bf x}^{\bf b}$ on $\mathcal{S}\cap \RR^s_{>0}$.
Here we are setting $y_i = \log(x_i)$. Therefore, to prove the claim
it suffices to argue that if a binomial inequality is valid on a small neighborhood of the origin in  $\mathcal{S}\cap \RR^s_{>0}$ then it is in fact valid on all of  $\mathcal{S}\cap \RR^s_{>0}$.
Suppose there is some binomial inequality ${\bf x}^{\bf a} \geq {\bf x}^{\bf b}$ that is valid on the neighborhood and
${\bf v} \in  \mathcal{S}\cap \RR^s_{>0}$ violates it. Without loss of generality, we can assume that all components of ${\bf v}$ are less than 1. Then there is some large enough positive integer
 $k$ for which ${\bf v}^k$ lies in the neighborhood we considered. If ${\bf v}^{\bf a} < {\bf v}^{\bf b}$ then we also have
 $({\bf v}^k)^{\bf a} < ({\bf v}^k)^{\bf b}$ which is a
 contradiction.  Thus $\textup{trop}(\mathcal{S})$ is determined by the behavior of $\mathcal{S}$ near the origin.
 \end{enumerate}
\end{proof}

Note that even though two sets might converge, their tropicalizations might not as seen in the following example.
This example also highlights the role of the neighborhood of the origin as in Lemma~\ref{lem:tropS} (4).

\begin{example}\label{ex:tropdifferent}
\begin{figure}[ht]
  \centering
\resizebox{!}{3cm}
{\begin{tikzpicture}
\begin{axis}[
  axis x line=center,
  axis y line=center,
  xtick={0,1},
  ytick={0,1},
  xmin=-0.1,
  xmax=1.2,
  ymin=-0.1,
  ymax=1.2]
\addplot +[mark=none, thick, black] coordinates {(1,0) (1,1)};
\addplot[name path=A, thick, black, domain=0:1] {x};
\addplot[name path=B, thick, black, domain=0:1] {0};
\addplot[gray!50] fill between[of=A and B];
\end{axis}
\end{tikzpicture} \quad \quad \begin{tikzpicture}
\begin{axis}[
  axis x line=center,
  axis y line=center,
  xtick={0,1},
  ytick={0,0.1,1},
  yticklabels = {0,$\Large{\varepsilon}$,1},
  xmin=-0.1,
  xmax=1.2,
  ymin=-0.1,
  ymax=1.2]
\addplot +[mark=none, thick, black] coordinates {(1,0) (1,1)};
\addplot +[mark=none, thick, black] coordinates {(0,0) (0,0.1)};
\addplot[name path=A, thick, black, domain=0:1] {0.9*x+0.1};
\addplot[name path=B, thick, black, domain=0:1] {0};
\addplot[gray!50] fill between[of=A and B];
\end{axis}
\end{tikzpicture}}
\caption{\label{fig:S1S2} The sets $\mathcal{S}$ (left) and $\mathcal{S}_{\varepsilon}$ (right)}
\end{figure}
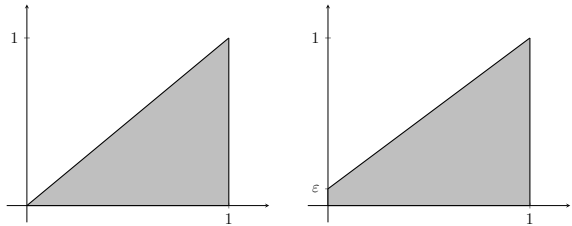
Consider the two sets in Figure \ref{fig:S1S2}. On the left is a triangle and on the right a slight
modification of the triangle into a quadrilateral with new vertex $(0,\varepsilon)$. The neighborhood of the origin
is different for the two sets.
Observe that $\mathcal{S}=\lim_{\varepsilon\rightarrow 0}\mathcal{S}_{\varepsilon}$ as is clear from Figure~\ref{fig:S1S2}. On the other hand, their tropicalizations, seen in Figure \ref{fig:tropS1S2}, do not, i.e., $\trop(\mathcal{S})\neq \lim_{\varepsilon\rightarrow 0} \trop(\mathcal{S}_\varepsilon)$ since the
the neighborhood of the origin is different in sets $\mathcal{S}$ and $\mathcal{S}_{\varepsilon}$ for any $\varepsilon>0$.

\begin{figure}[ht]
  \centering
\resizebox{!}{3cm}
{\begin{tikzpicture}
\begin{axis}[
  axis x line=center,
  axis y line=center,
  xtick={0},
  ytick={0},
  xmin=-5.5,
  xmax=1.5,
  ymin=-5.5,
  ymax=1.5]
\addplot +[mark=none, thick, black] coordinates {(0,0) (0,-5.5)};
\addplot[name path=A, thick, black, domain=-5.5:0] {x};
\addplot[name path=B, thick, black, domain=-5.5/10000:0] {10000*x};
\addplot[gray!50] fill between[of=A and B];
\end{axis}
\end{tikzpicture} \quad \quad \begin{tikzpicture}
\begin{axis}[
  axis x line=center,
  axis y line=center,
  xtick={0},
  ytick={0},
  xmin=-5.5,
  xmax=1.5,
  ymin=-5.5,
  ymax=1.5]
\addplot +[mark=none, thick, black] coordinates {(0,0) (0,-5.5)};
\addplot[name path=A, thick, black, domain=-5.5:0] {0};
\addplot[name path=B, thick, black, domain=-5.5/10000:0] {10000*x};
\addplot[name path=C, white, domain=-5.5:0] {-5.5};
\addplot[gray!50] fill between[of=A and C];
\end{axis}
\end{tikzpicture}}
\caption{\label{fig:tropS1S2} The tropicalizations of $\mathcal{S}$ (left) and $\mathcal{S}_\varepsilon$ (right)}
\end{figure}
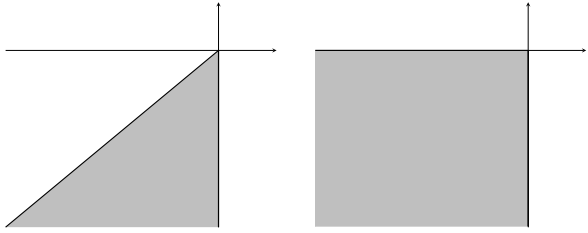

\end{example}

We now apply these results to the graph profile $\mathcal{G}_\mathcal{U}$ which is known to be
a connected and full-dimensional set \cite{MR538044}.
Moreover, it is known that every ${\bf v} \in \mathcal{G}_\mathcal{U}$ is arbitrarily close to 
$(t(C_1;G), \ldots, t(C_s;G))$ for some graph $G$.
As we will see in the proof of Theorem \ref{thm:mult}, one can argue that every neighborhood of ${\bf v}$ has a full-dimensional ball containing ${\bf v}$ that is contained in $\mathcal{G}_\mathcal{U}$. Therefore, there is a positive point in $\mathcal{G}_\mathcal{U}$
arbitrarily close to ${\bf v}$, and no information is lost by passing to $\log(\mathcal{G}_\mathcal{U})$.
Also, since $\mathcal{G}_\mathcal{U}$ is contained in $[0,1]^s$, $\log(\mathcal{G}_\mathcal{U})$ and
$\textup{trop}(\mathcal{G}_\mathcal{U})$ lie in $\RR^s_{\leq 0}$.

\begin{theorem} \label{thm:tropGU}
For the graph profile $\mathcal{G}_\mathcal{U}$,
$$\textup{trop}(\mathcal{G}_\mathcal{U}) = \textup{cl}(\textup{cone}(\log (\mathcal{G}_\mathcal{U}))) =
\textup{cl}(\textup{conv}(\log (\mathcal{G}_\mathcal{U})))
 \subseteq \RR^s_{\leq 0}.$$
Further, for any $\epsilon > 0$, $\textup{trop}(\mathcal{G}_\mathcal{U})$ is determined by the nonempty neighborhood
$\mathcal{\mathcal{G}_\mathcal{U}} \cap B({\bf 0},\epsilon)$ where $B({\bf 0},\epsilon)$ is the ball with center ${\bf 0}$ and radius $\epsilon$.
\end{theorem}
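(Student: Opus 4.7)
The plan is to reduce Theorem~\ref{thm:tropGU} to a direct application of Lemma~\ref{lem:tropS} with $\mathcal{S} = \mathcal{G}_\mathcal{U}$. Since each coordinate $t(C_i;G)\in[0,1]$, we immediately have $\mathcal{G}_\mathcal{U}\subseteq[0,1]^s$, which also gives $\textup{trop}(\mathcal{G}_\mathcal{U})\subseteq\RR_{\leq 0}^s$ under the log map. What remains is to verify three hypotheses: the Hadamard property, membership of $\mathbf{1}$, and the strict-approximation condition $\mathcal{G}_\mathcal{U}=\textup{cl}(\mathcal{G}_\mathcal{U}\cap[0,1)^s)$ required by part (4) of the lemma.

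For the Hadamard property, I would use the tensor (categorical) product of graphs. The classical multiplicativity identity $t(C;G_1\times G_2)=t(C;G_1)\cdot t(C;G_2)$ holds for any connected graph $C$, since a homomorphism from a connected graph into a tensor product decomposes uniquely as a pair of homomorphisms to the factors. Hence the Hadamard product of two density vectors realized by graphs is itself realized by a graph, and continuity of the Hadamard product together with closedness of $\mathcal{G}_\mathcal{U}$ extends this to the full profile. For $\mathbf{1}\in\mathcal{G}_\mathcal{U}$, take the sequence of complete graphs: for a connected $C$ on $v$ vertices, $t(C;K_n)=\prod_{i=0}^{v-1}(1-i/n)\to 1$ as $n\to\infty$, so $\mathbf{1}$ lies in the closure defining $\mathcal{G}_\mathcal{U}$.

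To secure the hypothesis of part (4), I would invoke the full-dimensional approximation property noted in the paragraph immediately preceding the theorem and proved in the course of Theorem~\ref{thm:mult}: every $\mathbf{v}\in\mathcal{G}_\mathcal{U}$ admits, arbitrarily nearby, a full-dimensional ball contained in $\mathcal{G}_\mathcal{U}$. In particular, one can approximate $\mathbf{v}$ by points with every coordinate in $(0,1)$, which simultaneously handles the strict-positivity needed for the log map and the strict-less-than-one condition of part (4). Nonemptiness of $\mathcal{G}_\mathcal{U}\cap B(\mathbf{0},\epsilon)$ for every $\epsilon>0$ is witnessed, for instance, by Erd\H{o}s--R\'enyi graphs $G(n,p)$ with sufficiently small $p>0$, whose density vectors lie in $(0,1)^s$ with each coordinate tending to $0$; alternatively, tensor powers of any graph with strictly positive density vector drive all coordinates to zero simultaneously.

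With these verifications, parts (1)--(3) of Lemma~\ref{lem:tropS} directly give the equalities $\textup{trop}(\mathcal{G}_\mathcal{U})=\textup{cl}(\textup{cone}(\log\mathcal{G}_\mathcal{U}))=\textup{cl}(\textup{conv}(\log\mathcal{G}_\mathcal{U}))$, and part (4) yields the neighborhood determination. The only genuinely nontrivial input is the full-dimensional approximation step, which is deferred in the paper to Theorem~\ref{thm:mult}; granting that ingredient, the proof is a bookkeeping exercise tying together the tensor product, the $K_n$ limit, and Lemma~\ref{lem:tropS}.
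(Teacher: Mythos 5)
Your proof is correct and follows essentially the same route as the paper's: reduce to Lemma~\ref{lem:tropS} by verifying the Hadamard property via multiplicativity of homomorphism densities over the categorical (tensor) product, and establish $\mathbf{1}\in\mathcal{G}_\mathcal{U}$ via the sequence $(K_n)$. You are slightly more explicit than the paper's proof in verifying the hypothesis of part (4) of Lemma~\ref{lem:tropS} (that $\mathcal{G}_\mathcal{U}=\textup{cl}(\mathcal{G}_\mathcal{U}\cap[0,1)^s)$ and that $\mathcal{G}_\mathcal{U}\cap B(\mathbf{0},\epsilon)$ is nonempty), but the paper handles this in the discussion immediately preceding the theorem by the same appeal to full-dimensionality and Theorem~\ref{thm:mult}, so there is no substantive difference.
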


\begin{proof}
By Lemma~\ref{lem:tropS} we just need to show that $\mathcal{G}_\mathcal{U}$ has the Hadamard property and contains
${\bf 1}$.
Equation 5.30 in \cite{LovaszBook} implies that $t(C_i;G)t(C_i;G') = t(C_i; G \times G')$ where $G \times G'$ is the categorical product of
$G$ and $G'$. Therefore, if  $(t(C_1;G), \ldots, t(C_s;G)) \in \mathcal{G}_\mathcal{U}$ and $(t(C_1;G'), \ldots, t(C_s;G')) \in \mathcal{G}_\mathcal{U}$, then $(t(C_1;G)t(C_1;G'), \ldots, t(C_s;G)t(C_s;G')) \in \mathcal{G}_\mathcal{U}$.

For the sequence of complete graphs $K_n$, $(t(C;K_n) \,:\, C \in \mathcal{U}) \rightarrow {\bf 1}$ as
$n \rightarrow \infty$ which implies that ${\bf 0} = \log({\bf 1})$ lies in the closure of the convex hull of
$\log(\mathcal{G}_\mathcal{U})$.
\end{proof}

 Theorem~\ref{thm:tropGU} implies that we obtain a great simplification in structure when we pass from
 the graph profile $\mathcal{G}_\mathcal{U}$ to its tropicalization, $\textup{trop}(\mathcal{G}_\mathcal{U})$, which is a closed
 convex cone. A natural next question is the following:

 \begin{question}
 Is  $\textup{trop}(\mathcal{G}_\mathcal{U})$ a polyhedral cone for any collection $\mathcal{U}$ of connected graphs? If yes, then is it necessarily a rational polyhedral cone?
 \end{question}

 If $\mathcal{U}$ contains two graphs, then
 $\textup{trop}(\mathcal{S})$ is indeed a polyhedral cone with two extreme rays since it is a two-dimensional cone. Could the generators of the extreme rays be non-rational? No graph profile for three graphs is known. Is there a graph profile for three graphs for which $\textup{trop}(\mathcal{S})$ is not polyhedral?

\begin{example} For $\mathcal{U} = \left\{ \uvedge, \,\, \uHthree \right\}$, we saw the
graph profile $\mathcal{G}_\mathcal{U}$ in
Figure~\ref{fig:edge-triangle-profile1}.

\begin{figure}[ht]
  \centering
\resizebox{!}{4cm}
{\begin{tikzpicture}
\begin{axis}[
  axis x line=center,
  axis y line=center,
  xtick={0},
  ytick={0},
  xlabel={\uvedge},
  ylabel={\uHthree},
  xlabel style={right},
  ylabel style={above},
  xmin=-5.5,
  xmax=1.5,
  ymin=-5.5,
  ymax=1.5]
\addplot +[mark=none, thick, black] coordinates {(0,0) (0,-5.5)};
\addplot[name path=A, thick, black, domain=-11/3:0] {3*x/2};
\addplot[name path=B, thick, black, domain=-5.5/10000:0] {10000*x};
\addplot[gray!50] fill between[of=A and B];
\end{axis}
\end{tikzpicture}}
\caption{\label{fig:edge-triangle-profile} The tropicalization of the graph profile of an edge and a triangle }
\end{figure}
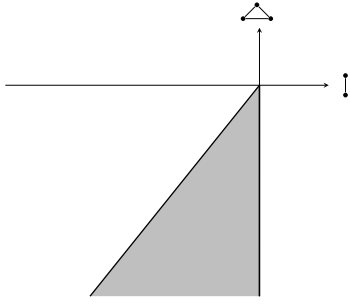

The curve bounding the upper part of the profile is $\uvedge^3 \geq \uHthree^2$ which stands for the density inequality
$t(\uvedge^3;G) \geq t(\uHthree^2;G)$, or equivalently, $t(\uvedge \uvedge \uvedge;G) \geq t(\uHthree \, \uHthree;G)$ for all unlabeled graphs $G$.
Moreover, we know that $\uvedge \leq 1$.
Let $y_1 := \log \uvedge$ and $y_2 := \log \uHthree$. Then
the above binomial inequalities correspond under the $\log$ map to the linear
inequalities
$$ 3 y_1 - 2 y_2 \geq 0 \,\,\textup{ and } \,\, y_1 \leq 0 $$
which form the cone generated by the rays $(-2,-3)$ and $(0,-1)$ in $\mathbb{R}^2_{\leq 0}$ as shown in
Figure~\ref{fig:edge-triangle-profile}. In the next section, we will see that this cone is indeed $\textup{trop}(\mathcal{G}_\mathcal{U})$.
\end{example}

\begin{remark} Note that $\textup{trop}(\mathcal{G}_\mathcal{U})$ can be understood as an object recording all possible order of growths of densities, i.e., recording whether there exists a sequence of graphs converging to that order of growth.
\end{remark}

\section{Explicit Tropicalization of Cliques and Stars}\label{sec:examples}

In this section, we explicitly compute $\trop(\G_{\U})$ for a finite collection $\U$ from two hypergraph families, cliques and stars. In both cases we will see that the tropicalizations are rational polyhedral simplicial cones.

Let $K^{(r)}_p$ be the complete $r$-uniform hypergraph on $p$ vertices, i.e., the graph on $p$ vertices where every set of $r$ vertices form a (hyper)edge. When $r=2$, $K^{(2)}_p$ is simply the complete graph on $p$ vertices. In Section~\ref{sec:example1}, we describe $\trop(\G_{\U})$ when ${\U}= \{K^{(r)}_r, K^{(r)}_{r+1}, \ldots, K^{(r)}_l\}$ is the collection of complete $r$-uniform hypergraphs for any $r\geq 2$ and $l\geq r$.

Let $S^{(r)}(b,c)$ denote the $r$-uniform hypergraph with $b$ edges all of which intersect in some set of $c$ vertices, and nowhere else. We call such graphs, \emph{stars with $b$ branches}, and we call the intersection of all the edges the \emph{center}. For example, $S^{(2)}(b,1)=K_{1,b}$, the complete bipartite graph with parts of size $1$ and $b$. Note that $S^{(r)}(b,c)$ has $b(r-c)+c$ vertices. In Section~\ref{sec:example2}, we describe $\trop(\G_{\U})$ when ${\U}= \{S^{(r)}(1,c),S^{(r)}(2,c),\ldots, S^{(r)}(l,c)\}$ for any $r\geq 2$, $c\leq r$ and $b\geq 1$.

We begin by showing that Theorem \ref{thm:tropGU} also holds for $r$-uniform hypergraphs. We first define what we mean by the product of two hypergraphs.

\begin{definition}
Let $G_1$ and $G_2$ be two $r$-uniform hypergraphs. The \emph{direct product} of $G_1$ and $G_2$ is $G_1\times G_2$ where $V(G_1\times G_2)=\{uv \,:\,  u\in V(G_1) \textrm{ and } v\in V(G_2)\}$, and $$E(G_1\times G_2)=\left\{\{u_1v_1, u_2v_2, \ldots, u_rv_r\} \,:\, \{u_1, \ldots, u_r\}\in E(G_1) \textrm{ and } \{v_1, \ldots, v_r\}\in E(G_2) \right\}.$$ Note that any pair of edges $\{u_1, \ldots, u_r\}\in E(G_1)$ and $\{v_1, \ldots, v_r\}\in E(G_2)$ gives rise to $r!$ different edges in $E(G_1\times G_2)$.
\end{definition}

\begin{theorem} \label{thm:tropHGU}
For a $r$-uniform hypergraph profile $\mathcal{G}_\mathcal{U}$, where $|\mathcal{U}| = s$, $$\textup{trop}(\mathcal{G}_\mathcal{U}) = \textup{cl}(\textup{cone}(\log (\mathcal{G}_\mathcal{U}))) = \textup{cl}(\textup{conv}(\log (\mathcal{G}_\mathcal{U})))  \subseteq \RR^s_{\leq 0}.$$ Further, for any $\epsilon > 0$, $\textup{trop}(\mathcal{G}_\mathcal{U})$ is determined by the nonempty neighborhood $\mathcal{\mathcal{G}_\mathcal{U}} \cap B({\bf 0},\epsilon)$ where $B({\bf 0},\epsilon)$ is the ball with center ${\bf 0}$ and radius $\epsilon$.
\end{theorem}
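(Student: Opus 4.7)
The plan is to mirror the proof of Theorem~\ref{thm:tropGU} and reduce everything to Lemma~\ref{lem:tropS}, which is stated abstractly for subsets of $\mathbb{R}^s_{\geq 0}$ and therefore applies to hypergraph profiles once its two hypotheses are checked: that $\mathcal{G}_\mathcal{U}$ has the Hadamard property and that $\mathbf{1}$ lies in its closure.

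First I would establish the multiplicativity identity $t(C; G_1)\cdot t(C; G_2) = t(C; G_1 \times G_2)$ for any connected $r$-uniform hypergraph $C$ and any two $r$-uniform hypergraphs $G_1, G_2$, where $\times$ is the direct product just defined. A map $\phi : V(C) \to V(G_1\times G_2) = V(G_1)\times V(G_2)$ decomposes uniquely as $\phi(v) = (\phi_1(v), \phi_2(v))$, and the definition of the edge set of $G_1\times G_2$ implies that $\phi$ is a hypergraph homomorphism if and only if each coordinate map $\phi_i : V(C) \to V(G_i)$ is a homomorphism. Hence $\hom(C, G_1\times G_2) = \hom(C, G_1)\cdot \hom(C, G_2)$, and dividing by $|V(G_1\times G_2)|^{|V(C)|} = |V(G_1)|^{|V(C)|}\cdot |V(G_2)|^{|V(C)|}$ gives the claimed product formula. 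The $r!$ multiplicity in the number of edges of $G_1\times G_2$ plays no role because homomorphism density counts vertex maps. This identity immediately yields the Hadamard property for $\mathcal{G}_\mathcal{U}$: if $\mathbf{v} = (t(C_i;G_1))_i$ and $\mathbf{w} = (t(C_i;G_2))_i$ lie in the profile, so does $\mathbf{v}\cdot\mathbf{w} = (t(C_i;G_1\times G_2))_i$.

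Next I would verify $\mathbf{1} \in \mathcal{G}_\mathcal{U}$ by exhibiting the sequence of complete $r$-uniform hypergraphs $K^{(r)}_n$. For any fixed connected $r$-uniform hypergraph $C$, a uniformly random map $V(C) \to V(K^{(r)}_n)$ is injective with probability tending to $1$ as $n\to\infty$, and every injective map sends edges of $C$ to edges of $K^{(r)}_n$. Hence $t(C; K^{(r)}_n) \to 1$, so $(t(C_1; K^{(r)}_n), \dots, t(C_s; K^{(r)}_n)) \to \mathbf{1}$ and $\mathbf{1} \in \mathcal{G}_\mathcal{U}$ by closure.

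With the two hypotheses in hand, parts~(2) and~(3) of Lemma~\ref{lem:tropS} yield the two equalities in the theorem, and the inclusion $\mathcal{G}_\mathcal{U} \subseteq [0,1]^s$ gives $\log(\mathcal{G}_\mathcal{U}) \subseteq \mathbb{R}^s_{\leq 0}$ and hence $\textup{trop}(\mathcal{G}_\mathcal{U}) \subseteq \mathbb{R}^s_{\leq 0}$. For the neighborhood-of-origin statement, part~(4) of Lemma~\ref{lem:tropS} applies once we note that $\mathcal{G}_\mathcal{U} = \textup{cl}(\mathcal{G}_\mathcal{U} \cap [0,1)^s)$, which holds because every point of $\mathcal{G}_\mathcal{U}$ is a limit of density vectors of finite hypergraphs $G$, and by taking direct products $G \times K^{(r)}_n$ with large $n$ one may perturb coordinates strictly below $1$. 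The only mild obstacle in the argument is bookkeeping around the $r!$ factor in the edge multiplicity of the direct product; once one observes that homomorphism densities depend only on the underlying edge set and not on edge multiplicities, the calculation is identical to the graph case.
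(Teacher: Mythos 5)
Your proof is correct and follows essentially the same route as the paper: you verify the Hadamard property via the multiplicativity of homomorphism densities under the direct product, verify $\mathbf{1} \in \mathcal{G}_\mathcal{U}$ via the sequence $K_n^{(r)}$, and then invoke Lemma~\ref{lem:tropS}. The paper does exactly this; your factorization argument for $\hom(C, G_1\times G_2) = \hom(C,G_1)\cdot\hom(C,G_2)$ is a slightly more structured phrasing of the same bijection the paper writes as ``$\psi\mapsto(\psi_G,\psi_{G'})$''.

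One small point of difference: you explicitly check the hypothesis $\mathcal{G}_\mathcal{U} = \textup{cl}(\mathcal{G}_\mathcal{U}\cap[0,1)^s)$ required by Lemma~\ref{lem:tropS}(4), which the paper leaves implicit. That care is welcome, though your argument via $G\times K_n^{(r)}$ is more than is needed: since every $C_i$ contains an edge, any hypergraph $G$ with at least one vertex already satisfies $t(C_i;G) < 1$ (the constant map is never a homomorphism), so the set of density vectors itself lies in $[0,1)^s$ and $\mathcal{G}_\mathcal{U}$ is its closure by definition. Also, your remark that ``the $r!$ multiplicity plays no role because homomorphism density counts vertex maps'' is a slight misdirection; the $r!$ edges are precisely what ensure that a vertex map $\phi$ into the product is a homomorphism if and only if both coordinate maps are. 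That equivalence is what you actually use, and it is used correctly.
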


\begin{proof}
We first show that $\mathcal{G}_\mathcal{U}$ has the Hadamard property, i.e., that $$t(H;G)t(H;G')=t(H;G\times G')$$ for all $r$-uniform hypergraphs $H, G, G'$. Since the total number of maps from
$V(H)$ to $V(G \times G')$ is $(|V(G)||V(G')|)^{|V(H)|}$, the denominators on both sides of the equation are the same.
For homomorphisms $\varphi: V(H) \rightarrow V(G)$ and $\varphi': V(H) \rightarrow V(G')$, the map
 $\psi: V(H) \rightarrow V(G\times G')$ such that $\psi(v)\mapsto \varphi(v)\varphi'(v)$ is also a homomorphism.
 Conversely, for a homomorphism $\psi: V(H) \rightarrow V(G)\times V(G')$, the projections
 $\varphi = \psi_G \,:\, V(H) \rightarrow V(G)$ and $\varphi' = \psi_{G'} \,:\, V(H) \rightarrow V(G')$ onto the two components
 $V(G)$ and $V(G')$ are homomorphisms. Thus, the numerators on both sides of the equation are also the same.
Therefore, if  $(t(C_1;G), \ldots, t(C_s;G)) \in \mathcal{G}_\mathcal{U}$ and $(t(C_1;G'), \ldots, t(C_s;G')) \in \mathcal{G}_\mathcal{U}$, then $(t(C_1;G)t(C_1;G'), \ldots, t(C_s;G)t(C_s;G')) \in \mathcal{G}_\mathcal{U}$.

We now show that $\mathcal{G}_\mathcal{U}$ contains
${\bf 1}$. For the sequence of complete $r$-uniform hypergraphs $K^{(r)}_n$, $(t(C;K^{(r)}_n) \,:\, C \in \mathcal{U}) \rightarrow {\bf 1}$ as
$n \rightarrow \infty$ which implies that ${\bf 0} = \log({\bf 1})$ lies in the closure of the convex hull of
$\log(\mathcal{G}_\mathcal{U})$.

The statement of the theorem now follows from Lemma~\ref{lem:tropS}.
\end{proof}

\subsection{Tropicalization of Hypergraph Clique Profiles.}
\label{sec:example1}

Consider the hypergraph family ${\U}= \{K^{(r)}_r, K^{(r)}_{r+1}, \ldots, K^{(r)}_l\}$ for some $r\geq 2$ and $l\geq r$. Observe that both $\G_{\U}$ and $\trop(\G_{\U})$ lie in $\RR^{l-r+1}$ where the $i^{th}$ coordinate corresponds to the graph $K^{(r)}_{r+i-1}$ for
 any $1\leq i\leq l-r+1$. In Theorem~\ref{lem:trop-clique}, we describe the facets and extreme rays of $\trop(\G_\U)$. For any $\mathbf{y} \in \mathbb{R}^{l-r+1}$, denote by $y_{K^{(r)}_i}$ the coordinate of $\mathbf{y}$ indexed by $K^{(r)}_i$ for $r \leq i \leq l$. Also, define $\mathbf{1}_{K^{(r)}_i}$ for some $r \leq i \leq l$ to be the point in $\mathbb{R}^{l-r+1}$ with $1$ in the coordinate labeled by $K^{(r)}_i$ and $0$ otherwise.

\begin{theorem}\label{lem:trop-clique}
Let ${\U}= \{K^{(r)}_r, K^{(r)}_{r+1}, \ldots, K^{(r)}_l\}$, then 

$$\trop(\G_\U)=\left\{{\bf y} \in \mathbb{R}^{l-r+1} \, : \,
\begin{array}{l}
y_{K_r^{(r)}} \leq 0,\\
 (r+i)y_{K^{(r)}_{r+i-1}} -(r+i-1)y_{K^{(r)}_{r+i}} \geq 0   \,\,\,\,\forall \;1\leq i\leq l-r
 \end{array}
 \right\}.$$
Moreover, the extreme rays of $\trop(\G_\U)$ are $$\mathbf{u}_i=-\sum_{j=i}^{l-r+1} (r+j-1) \mathbf{1}_{K_{r+j-1}^{(r)}}$$ for $1\leq i\leq l-r+1$.
\end{theorem}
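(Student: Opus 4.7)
The plan is to prove $\trop(\G_\U) = P$, where $P$ denotes the polyhedral cone on the right-hand side, and simultaneously verify the claim about extreme rays. I would split the argument into showing both inclusions $\trop(\G_\U) \subseteq P$ and $P \subseteq \trop(\G_\U)$, together with a short polyhedral check. For $\trop(\G_\U) \subseteq P$, Theorem~\ref{thm:tropHGU} gives $\trop(\G_\U) = \textup{cl}(\textup{cone}(\log(\G_\U)))$, so it suffices to verify each defining inequality of $P$ on $\log(\G_\U)$. The bound $y_{K_r^{(r)}} \leq 0$ is immediate from $t(K_r^{(r)};G)\leq 1$. Setting $p = r+i-1$, the inequality $(r+i)y_{K_{r+i-1}^{(r)}} \geq (r+i-1)y_{K_{r+i}^{(r)}}$ exponentiates to the Kruskal--Katona-type bound
\[
t(K_p^{(r)};G)^{p+1} \geq t(K_{p+1}^{(r)};G)^{p},
\]
which I would derive by applying the classical Kruskal--Katona theorem to the family $\mathcal{F}_p \subseteq \binom{V(G)}{p}$ of $p$-cliques of $G$. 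Since every $(p+1)$-clique has all of its $p$-subsets in $\mathcal{F}_p$, the set $\mathcal{F}_{p+1}$ of $(p+1)$-cliques sits inside the upper shadow of $\mathcal{F}_p$, and Kruskal--Katona yields $|\mathcal{F}_{p+1}| \leq \binom{x}{p+1}$ whenever $|\mathcal{F}_p|=\binom{x}{p}$. Translating via $t(K_p^{(r)};G) = p!\,\#K_p(G)/n^p$ gives $t(K_{p+1}^{(r)};G) \leq t(K_p^{(r)};G)^{(p+1)/p}$, equivalent to the displayed inequality.

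For the reverse inclusion, for each $i \in \{1,\ldots,l-r+1\}$ I would construct a sequence $G_n = K_k^{(r)} \sqcup H_i^{(r)}$ on $n$ vertices, where $H_i^{(r)}$ is the complete balanced $(r+i-2)$-partite $r$-uniform hypergraph on the remaining $n-k$ vertices (interpreted as the empty hypergraph when $r+i-2<r$, i.e.\ $i=1$). Take $k/n\to c \in (0,1)$ for $i = 1$, and $k = \lfloor n^\alpha \rfloor$ with a fixed $\alpha \in (0,1)$ for $i \geq 2$, so that $k\to\infty$ and $k/n\to 0$. Since the two parts are vertex-disjoint, a homomorphism from $K_p^{(r)}$ sends all $p$ vertices into one part, giving
\[
t(K_p^{(r)};G_n) = (k/n)^p\,t(K_p^{(r)};K_k^{(r)}) + \bigl(\tfrac{n-k}{n}\bigr)^p t(K_p^{(r)};H_i^{(r)}) + o(1).
\]
For $p \leq r+i-2$, placing one vertex into each of $p$ distinct parts of $H_i^{(r)}$ produces a $K_p^{(r)}$, so $t(K_p^{(r)};H_i^{(r)})$ tends to a positive constant $c_p$; for $p \geq r+i-1$, pigeonhole forces two vertices into a common part, killing all such cliques in $H_i^{(r)}$, so $t(K_p^{(r)};G_n) \sim (k/n)^p$. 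Consequently $y_{K_p^{(r)}}$ stays bounded for $p \leq r+i-2$ and behaves like $p\log(k/n)$ for $p \geq r+i-1$. Taking the tropicalization base $\tau_n = k/n$ when $i\geq 2$, the bounded coordinates rescale to $0$ while the remaining coordinates tend to $-(r+i-1),\ldots,-l$; for $i=1$ the unrescaled vector already has direction $-(r,\ldots,l)$. In all cases $\mathbf{u}_i \in \trop(\G_\U)$ by Lemma~\ref{lem:tropS}(1), hence $\textup{cone}(\mathbf{u}_1,\ldots,\mathbf{u}_{l-r+1}) \subseteq \trop(\G_\U)$.

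Finally, a short polyhedral computation completes the proof. The cone $P$ is pointed: from $y_{K_r^{(r)}} = 0$ and the equalities $(r+i)y_{r+i-1} = (r+i-1)y_{r+i}$, all coordinates vanish inductively. Each $\mathbf{u}_i$ lies in $P$ and saturates exactly $l-r$ of its $l-r+1$ defining halfspaces, and the $\mathbf{u}_i$'s are linearly independent because their coordinate matrix is upper triangular with nonzero diagonal. Hence they are precisely the $l-r+1$ extreme rays of $P$, so $\textup{cone}(\mathbf{u}_1,\ldots,\mathbf{u}_{l-r+1}) = P$. Combining this with the two inclusions yields $\trop(\G_\U) = P$ with the stated extreme rays. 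The main obstacle is producing the hypergraph Kruskal--Katona inequality in the precise form used above; once available, the extremal constructions are transparent, since disjoint unions of a small clique with a balanced multipartite hypergraph naturally separate the densities of small cliques (which stabilize to constants) from those of large cliques (which decay polynomially in $k/n$) onto different logarithmic scales.
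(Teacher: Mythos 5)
Your proposal is correct and follows essentially the same route as the paper: one inclusion via $t(K_r^{(r)};G)\le 1$ together with the Kruskal--Katona binomial inequalities, and the other by exhibiting, for each $\mathbf{u}_i$, a sequence of hypergraphs built as a disjoint union of a small clique with a complete balanced $(r+i-2)$-partite $r$-uniform hypergraph, so that densities of cliques of size at most $r+i-2$ stabilize to constants while those of larger cliques decay at the rates $(r+i-1),\ldots,l$ relative to the chosen base. The only differences are cosmetic (you derive the Kruskal--Katona density inequality explicitly rather than citing it, you parametrize the construction by $k=\lfloor n^\alpha\rfloor$ instead of a shrinking fraction $\alpha$, and you verify the extreme rays by counting saturated halfspaces rather than solving the row-deleted linear systems $M_{-i}\mathbf{y}=0$); also, what you call the ``upper shadow'' step is more precisely the contrapositive of the lower-shadow form of Kruskal--Katona applied to $\mathcal{F}_{p+1}$, but the argument as intended is the standard one and is correct.
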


\begin{proof}
Let $C$ be the cone on the right hand side of the equation in the theorem.
First observe that $t(K^{(r)}_r,G) \leq 1$ is a valid inequality for all graphs $G$ and thus the
inequality $y_{K^{(r)}_r} \leq 0$ is valid for $\trop(\G_\U)$.
The Kruskal-Katona theorem~\cites{kruskal1963number,katona2009theorem} (see also \cite{MR3200287}) in the context of graph homomorphism densities of complete graphs implies that for any integers $r\leq p < q$,
$\left(t(K^{(r)}_p,G)\right)^q - \left(t(K^{(r)}_q,G)\right)^p \geq 0$ is valid for each graph $G$. This binomial inequality for
$\G_\U$ implies that the inequality $q y_{K^{(r)}_p} -p y_{K^{(r)}_q }\geq 0$ is valid for $\trop(\G_\U)$ for each $r\leq p<q$.
Using the inequalities for each $r\leq p\leq l-1$ and $q=p+1$, we obtain that all the inequalities describing $C$ are valid for $\trop(\G_\U)$.
Thus $\trop(\G_\U)\subseteq C$.

Before we prove the other containment, we show that extreme rays of $C$ are as claimed in the theorem.

\begin{lemma}
The extreme rays of $C$ are $\mathbf{u}_i=-\sum_{j=i}^{l-r+1} (r+j-1) \mathbf{1}_{K_{r+j-1}^{(r)}}$ for $i\in \{1, \ldots, l-r+1\}$.
\end{lemma}

\begin{proof}
We have $C=\{{\bf y} \in \RR^{l-r+1}: M {\bf y} \geq 0\}$ where
$$ M := \begin{bmatrix}
-1 & 0 & 0 & 0 & 0 & \cdots &0 & 0& 0  \\
(r+1) & -r & 0 & 0 & 0 & \cdots & 0 & 0& 0  \\
0 & (r+2) & -(r+1) & 0 & 0 & \cdots & 0 & 0 & 0 \\
\vdots & \vdots & \vdots & \vdots & \vdots & \vdots & \vdots&\\
0 & 0 & 0 & 0 & 0 & \cdots  &(l-1) &-(l-2) & 0 \\
0 & 0 & 0 & 0 & 0 & \cdots & 0& l &  -(l-1)
\end{bmatrix}.$$
Since $M$ is lower triangular square matrix of size $l-r+1$, with non-zero diagonal, it is invertible. The candidate extreme rays can be obtained by setting a subset of $l-r$ constraints at equality or equivalently, all but one of the constraints at equality.  Let $M_{-i}$ denote the matrix obtained after removing the $i^{\text{th}}$ row of $M$. Then a simple check shows that solutions $M_{-i} \mathbf{y} =0$ are exactly $\{\lambda \cdot \mathbf{u}_i:\lambda \in \RR\}$. Since $\mathbf{u}_i\in C$, we obtain that it is an extreme ray. Since these are all the candidate extreme rays, we have the lemma.
\end{proof}

To prove $C\subseteq \trop(\G_\U)$, it is enough to show that the extreme rays of $C$ are contained in $\trop(\G_\U)$ as shown in the next lemma.
\begin{lemma}
The vectors $\mathbf{u}_i$ are in $\textup{trop}(\mathcal{G}_{\mathcal{U}})$ for every $i\in \{1, \ldots, l-r+1\}$.
\end{lemma}

\begin{proof}
Let $T_{m,k}^{(r)}$ be a $r$-uniform hypergraph on $m$ vertices partitioned into $k$ parts as equal in size as possible where every $r$ vertices coming from $r$ different parts form an edge (when $r=2$, this graph is the Tur\'an graph on $m$ vertices with $k$ parts). If $k<r$, then $T_{m,k}^{(r)}$ is the empty graph on $m$ vertices. Note that $T_{m,k}^{(r)}$ contains cliques of size $k$ (and less), but no clique of size $k+1$ (or more).

Fix some $i \in \{1, \ldots, l-r+1\}$.  For any $\alpha\in (0,1)$ such that $\alpha n\in \mathbb{N}$, consider the hypergraph
$G$ where $\alpha n $ vertices form a clique and where the remaining $(1-\alpha) n$ vertices form a
$T_{(1-\alpha)n,r+i-2}^{(r)}$ graph (see Figure \ref{fig:Gexample}). Note that $t\left(K_j^{(r)};G\right)= \alpha^j + \frac{(r+i-2)! (1-\alpha)^j}{(r+i-2-j)!(r+i-2)^j}+O\left(\frac1n\right)$ for any $r\leq j\leq r+i-2$ (since cliques of those sizes can be found both in the $\alpha n$-clique and in $T_{(1-\alpha)n,r+i-2}^{(r)}$) and that $t\left(K^{(r)}_j;G\right)=\alpha^j +O\left(\frac1n\right)$ for any $r+i-1\leq j \leq l$ (since cliques of those size can only be found in the $\alpha n$ clique of $G$). For instance, to see the latter, note that that $t\left(K^{(r)}_j;G\right)=\frac{\binom{\alpha n}{j}j!}{n^j}$ for any $r+i-1 \leq j \leq l$.  

\begin{center}
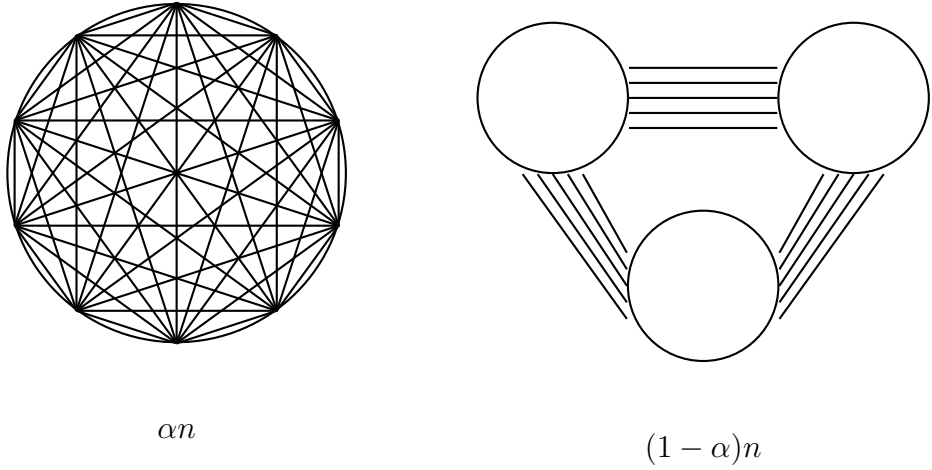
\begin{figure}[ht]
\begin{tikzpicture}[thick]

\path [every node/.style={draw, circle, minimum width=2cm]}]
node (d)[minimum width=4.5cm,label={[label distance=0.15cm]270:$\alpha n$}] at (0,0) {}
  node (a) at (5,1) {}
  node (b) at (9,1) {}
  node [label={[label distance=0.15cm]270:$(1-\alpha)n$}] (c) at (7,-1.5)  {};

\begin{scope}[->,>=latex]
    \foreach \i in {-2,...,2}{%
      \draw[-] ([yshift=\i * 0.2 cm]a.east) -- ([yshift=\i * 0.2 cm]b.west) ;}

    \foreach \i in {-2,...,2}{%
      \draw[-] ([xshift=\i * 0.2 cm]a.south) -- ([yshift=\i * 0.22 cm]c.west) ;}

    \foreach \i in {-2,...,2}{%
      \draw[-] ([yshift=\i * 0.22 cm]c.east) --  ([xshift=-\i * 0.2 cm]b.south) ;}
\end{scope}

\node[
    regular polygon,
    regular polygon sides=10,
    minimum size=4.5cm,
    rotate=180/10,
  ] (a) {};
\foreach \x in {1,...,10}{
\foreach \y in {\x,...,10}{
\draw (a.corner \x) -- (a.corner \y);
};
};
\end{tikzpicture}
\caption{\label{fig:excomplete} $G$ when $i=3$, $r=2$} \label{fig:Gexample}
\end{figure}
\end{center}

Now consider the vector $\mathbf{v}\in \RR^{l+r-1}$ such that $v_h=\left(\frac{\log(t(K^{(r)}_{r+h-1};G))}{\log \alpha}\right)$ for each $1\leq h\leq l-r+1$.As $\alpha\rightarrow 0$ and $n\rightarrow \infty$, we have $v_h\rightarrow 0$ for all $1\leq h\leq i-1$ and $v_h\rightarrow \-(r+h-1)$ for $i\leq h\leq l-r+1$. Since this limit point is exactly $\mathbf{u}_i$ and in $\trop(\G_\U)$, we have the lemma.
\end{proof}
This completes the proof of Theorem~\ref{lem:trop-clique}.
\end{proof}

Note that one can find $\trop(\G_{\U'})$ for $\U'\subset \mathcal{U}$ by projecting down $\trop(\G_\U)$ on the appropriate coordinates. Moreover, a consequence of Theorem~\ref{lem:trop-clique} is that any valid binomial inequality for $\G_\mathcal{U}$ is implied by the
Kruskal-Katona inequalities and $t(K_r^{(r)};G) \leq 1$.

\subsection{Tropicalization of Star Hypergraph Profiles.}\label{sec:example2}

We now give the tropicalization of a collection of generalized stars $\{S^{(r)}(1,c), S^{(r)}(2,c), \ldots, S^{(r)}(l,c)\}$. For any $\mathbf{y} \in \mathbb{R}^{l}$, denote by $y_{S^{(r)}(b,c)}$ the coordinate of $\mathbf{y}$ indexed by $S^{(r)}(b,c)$ for $1 \leq b \leq l$. Also, define $\mathbf{1}_{S^{(r)}(b,c)}$ for some $1\leq b \leq l$ to be the point in $\mathbb{R}^{l}$ with $1$ in the coordinate labeled by $S^{(r)}(b,c)$ and $0$ otherwise.  Finally, let $d_{v_1, v_2, \ldots, v_k}=|\{e\in E(G)|v_1, v_2, \ldots, v_k \in e\}|$ be the \emph{(common) degree of a set of vertices $v_1, \ldots, v_k$}.

\begin{theorem}\label{lem:trop-stars}
Let ${\U}= \{S^{(r)}(1,c), S^{(r)}(2,c), \ldots, S^{(r)}(l,c)\}$, then
$$\trop(\G_\U)=\left\{{\bf y} \in \RR^{l}:\mathbf{a}_b^\top {\bf y} \geq 0, \;\forall \;1\leq b\leq l \right\}$$
where 
$$\begin{array}{ll}
\mathbf{a}_1^\top {\bf y} = -2y_{S^{(r)}(1,c)}+y_{S^{(r)}(2,c)}, & \\
\mathbf{a}_b^\top {\bf y}= y_{S^{(r)}(b-1,c)}-2y_{S^{(r)}(b,c)}+y_{S^{(r)}(b+1,c)} & \textup{  for } 
2\leq b\leq l-1, \textup{  and }\\
\mathbf{a}_l^\top {\bf y}=y_{S^{(r)}(l-1,c)}-y_{S^{(r)}(l,c)} & .
\end{array}$$
Moreover, the extreme rays of $\trop(\G_\U)$ are
$\ubf_1,\ldots, \ubf_l$ where $$\ubf_b=-\sum_{j=1}^{b} j\mathbf{1}_{S^{(r)}(j,c)} -\sum_{j=b+1}^l b\mathbf{1}_{S^{(r)}(j,c)}$$
 for $1\leq b\leq l$.  
\end{theorem}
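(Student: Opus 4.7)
My plan is to prove both inclusions $\trop(\mathcal{G}_\mathcal{U}) \subseteq C$ and $C \subseteq \trop(\mathcal{G}_\mathcal{U})$, where $C$ denotes the cone on the right-hand side of the statement, following the template of Theorem~\ref{lem:trop-clique}. The key starting observation for both directions is that
\[ t(S^{(r)}(b,c); G) = \mathbb{E}\bigl[f(U_1,\ldots,U_c)^b\bigr], \qquad f(u_1,\ldots,u_c) := \frac{(r-c)!\, d_{u_1,\ldots,u_c}}{n^{r-c}} \in [0,1], \]
where $n = |V(G)|$, the $U_i$'s are i.i.d.\ uniform on $V(G)$, and $f$ vanishes when the $u_i$'s are not distinct. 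In other words, densities of star hypergraphs in $\mathcal{U}$ are exactly the moments of the bounded random variable $f$.

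For the inclusion $\trop(\mathcal{G}_\mathcal{U}) \subseteq C$, the defining inequalities of $C$ correspond under the $\log$ map to standard moment inequalities for $f$. The middle constraints $\mathbf{a}_b^\top \mathbf{y} \geq 0$ for $2 \leq b \leq l-1$ correspond to log-convexity, $\mathbb{E}[f^{b-1}]\mathbb{E}[f^{b+1}] \geq \mathbb{E}[f^{b}]^2$, which follows from Cauchy--Schwarz applied to $f^{(b-1)/2}$ and $f^{(b+1)/2}$; the endpoint $\mathbf{a}_1^\top \mathbf{y} \geq 0$ is Jensen's inequality $\mathbb{E}[f^2] \geq \mathbb{E}[f]^2$; and $\mathbf{a}_l^\top \mathbf{y} \geq 0$ is $\mathbb{E}[f^{l-1}] \geq \mathbb{E}[f^l]$, which holds because $f \leq 1$.

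For the reverse inclusion, I would first verify that $\ubf_1,\ldots,\ubf_l$ are indeed the extreme rays of $C$: the $l \times l$ matrix whose rows are $\mathbf{a}_1, \ldots, \mathbf{a}_l$ is tridiagonal with non-zero determinant, and a short computation shows $\mathbf{a}_{b'}^\top \ubf_b = \delta_{bb'}$, so each $\ubf_b$ lies on exactly $l-1$ linearly independent facets of $C$. To realize $\ubf_b$ as a limit of log-density vectors, fix $b \in \{1,\ldots,l\}$, pick $\beta \in (0,1)$ and $n \in \mathbb{N}$ with $\beta$ small and $n$ large, and set $\alpha := \beta^{b/c}$. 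Choose $A \subseteq V(G)$ with $|A| = \lfloor \alpha n \rfloor$, and define $E(G) := E_1 \cup E_\beta$, where $E_1$ consists of all $r$-subsets of $V(G)$ containing at least $c$ vertices of $A$, and $E_\beta$ is any $\beta$-quasi-regular $r$-uniform hypergraph on $V(G)$, i.e., one in which every $c$-tuple has common degree $(1+o(1))\beta \binom{n-c}{r-c}$ (for instance, a typical realization of $\mathcal{H}^{(r)}(n,\beta)$, where a Chernoff bound plus union bound over the $\binom{n}{c}$ tuples suffices). A direct count shows that a $c$-tuple with exactly $c'$ of its $c$ vertices in $A$ satisfies $f \approx \beta + \binom{r-c}{c-c'}\alpha^{c-c'}$; grouping contributions to $\mathbb{E}[f^j]$ by $c'$, the dominant terms are $c'=c$ (contributing $\alpha^c = \beta^b$) and $c'=0$ (contributing $\beta^j$), yielding $t(S^{(r)}(j,c);G) = \Theta(\beta^{\min(j,b)})$. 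Passing to the limit $n \to \infty$ and then $\beta \to 0$, $\log_{1/\beta} t(S^{(r)}(j,c);G) \to -\min(j,b) = (\ubf_b)_j$, placing $\ubf_b$ in $\trop(\mathcal{G}_\mathcal{U})$.

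The main obstacle in the argument above is the degree count controlling the intermediate values $1 \leq c' \leq c-1$. Showing these contributions are dominated by the two boundary terms reduces to the elementary inequality $bc' + b(c-c')j \geq c\min(j,b)$ for all $0 \leq c' \leq c$ and $j \geq 1$, verified by casework on whether $j \leq b$ or $j > b$. A secondary technical task is to supply $E_\beta$ with enough uniformity (via the concentration argument above or an explicit quasi-random construction) so that the degree count is insensitive to the choice of tuple, and to handle the overlap $E_1 \cap E_\beta$, which is easily absorbed into the $o(1)$ error since $|E_1 \cap E_\beta| \leq |E_\beta|$ is a lower-order term in every relevant common degree.
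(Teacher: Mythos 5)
Your proposal is correct and follows essentially the same strategy as the paper: the inclusion $\trop(\G_\U)\subseteq C$ via moment/log-convexity inequalities (Cauchy--Schwarz, Jensen, and $f\leq 1$) is identical, the verification that $\ubf_1,\dots,\ubf_l$ are the extreme rays of $C$ via the tridiagonal matrix and $\mathbf{a}_{b'}^\top\ubf_b=\delta_{bb'}$ is identical, and the reverse inclusion is established by exhibiting, for each $\ubf_b$, a family of hypergraphs whose log-density vectors converge to it, with the same crucial parameter choice $\alpha = \beta^{b/c}$. The only real difference is in the explicit construction: the paper builds $G$ from a clique $A$ of size $\alpha n$, a $k$-regular $\rho$-density hypergraph on the complementary set $B$ of size $(1-\alpha)n$, and cross-edges consisting of exactly $c$ vertices in $A$ and $r-c$ in $B$; you instead take $E_1$ to be \emph{all} $r$-subsets meeting $A$ in at least $c$ vertices, superposed with a $\beta$-quasi-regular random hypergraph $E_\beta$ on the whole vertex set. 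Your version is arguably a bit cleaner: taking ``at least $c$ in $A$'' makes the common degree of $c$-tuples contained in $A$ exactly $\binom{n-c}{r-c}$, and putting the sparse layer on all of $V(G)$ rather than just $B$ avoids some case distinctions. Both give $t(S^{(r)}(j,c);G)=\Theta(\beta^{\min(j,b)})$, and your elementary inequality $bc'+b(c-c')j \geq c\min(j,b)$ for $0\leq c'\leq c$, $j\geq 1$ (plus the trivial bound when the $E_1$ contribution vanishes because $r-2c+c'<0$) correctly controls the intermediate terms.
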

\begin{proof}
To calculate the homomorphism density of $S^{(r)}(b,c)$ in some graph $G$ with $n$ vertices, we first note that there are $n^{b(r-c)+c}$ maps from $V(S^{(r)}(b,c))$ to $V(G)$. We first decide to which $c$ distinct vertices $v_1, v_2, \ldots, v_c\in V(G)$ to send the center and in which of $c!$ different ways to do so.  Then each of the $b$ edges of $S^{(r)}(b,c)$ can be sent to any of the $d_{v_1, v_2, \ldots, v_c}$ edges containing $v_1, v_2, \ldots, v_c$ in $G$. For each of the $b$ edges, there are $(r-c)!$ different orders to send the vertices in that edge that are not in the center to some chosen edge in $d_{v_1, v_2, \ldots, v_c}$. Thus, for any $b\geq 1$, $1\leq c\leq r-1$ and $r\geq 2$, we have that

\begin{align*}
 t(S^{(r)}(b,c);G) & =   \frac{c!  \sum_{1 \leq v_1 < v_2 < \ldots < v_c \leq n} ((r-c)!d_{v_1,v_2, \ldots, v_c})^b } {n^{b(r-c)+c}} \\
 &  = \frac{c!}{n^c} \sum_{1 \leq v_1 < v_2 < \ldots < v_c \leq n} \left(\frac{(r-c)! d_{v_1, v_2, \ldots, v_c}}{n^{r-c}}\right)^b.
 \end{align*}
Let $\delta_{v_1, v_2, \ldots, v_c}=\frac{(r-c)! d_{v_1, v_2, \ldots, v_c}}{n^{r-c}}$. Consider the uniform measure on $\delta_{1,2,\ldots,c} , \ldots, \delta_{n-c+1,n-c+2, \ldots, n} \in [0,1]$. The $b$th moment of this measure is $$\frac{\sum_{1 \leq v_1 < v_2 < \ldots < v_c \leq n} \delta_{v_1, v_2, \ldots, v_c}^b}{\binom{n}{c}}= t(S^{(r)}(b,c);G) +O\left(\frac1n\right).$$
This connection allows us to write down binomial inequalities that are valid on $(t(S^{(r)}(b,c);G) \,:\, b=1,\ldots, l)$.
They are of the form
$$ m_2 \geq m_1^2, \,\,\, m_{b-1}m_{b+1} \geq m_b^2, \,\,b = 2, \ldots, l-1, \,\,\, m_{l-1} \geq m_{l}.$$
These inequalities follow from H\"{o}lder's inequality \cite[Theorem 18]{MR944909}.
These binomial inequalities imply that the $\mathbf{a}_b^\top {\bf y} \geq 0$ are valid for $\trop(\G_\U)$. Thus if we let $$C=\{{\bf y} \in \RR^{l}:\mathbf{a}_b^\top {\bf y} \geq 0, \;\forall \; 1 \leq b\leq l\},$$ we have $\trop(\G_\U)\subseteq C$. As in Theorem~\ref{lem:trop-clique}, we first characterize the extreme rays of $C$ and then show that they are in $\trop(\G_\U)$ to complete the proof.

\begin{lemma} The extreme rays of the cone $C$ are exactly $\ubf_b$ for $1\leq b\leq l$. 
\end{lemma}

\begin{proof}
Observe that $C=\{{\bf y}\in \RR^{l}: A{\bf y}\geq 0\}$ where
$$ A := \begin{bmatrix}
-2 & 1 & 0 & 0 & 0 & \cdots &0 & 0  \\
1 & -2 & 1 & 0 & 0 & \cdots & 0 & 0  \\
0 & 1 & -2 & 1 & 0 & \cdots & 0 & 0  \\
0 & 0 & 1 & -2 & 1  & \cdots & 0 & 0\\
0 & 0 & 0 & 1 & -2 & \cdots & 0 & 0 \\
\vdots & \vdots & \vdots & \vdots & \vdots & \vdots & \vdots & \vdots \\
0 & 0 & 0 & 0 & 0 & \cdots & 1 & 0 \\
0 & 0 & 0 & 0 & 0 & \cdots & -2& 1 \\
0 & 0 & 0 & 0 & 0 & \cdots & 1 &  -1
\end{bmatrix}.$$
where the $b^{\textup{th}}$ row of $A$ is $\mathbf{a}_b$. Since $A\in \RR^{l \times l}$, the candidate extreme rays of $C$ are obtained by setting $l-1$ of the defining constraints $\mathbf{a}_b^\top {\bf y} \geq 0$ to equality. Since there are exactly $l$ constraints, it implies there are at most $l$ extreme rays. Observe that $\ubf_i=(-1,-2,\ldots, -b, -b,\ldots, -b)$ satisfies all but the $b^{\textup{th}}$ constraint at equality. Since $\ubf_b\in C$, it is an extreme ray for each $1\leq b\leq l$.
\end{proof}

 We will now show that
$C \subseteq  \textup{trop}(\mathcal{G}_\mathcal{U})$ by showing that $\ubf_m \in \trop(\G_\U)$ for $1 \leq m \leq l$. This is done by exhibiting a family of graphs $\{G_n\}$ for each extreme ray $\ubf_m$ for which $$(\log(t(S^{(r)}(b,c);G_n)) \,:\, b=1,\ldots, l)$$ limits to this extreme ray. 

\begin{lemma}
The extreme rays of $C$ are in $\textup{trop}(\mathcal{G}_{\mathcal{U}})$, and hence $C =  \textup{trop}(\mathcal{G}_\mathcal{U})$. \end{lemma}

\begin{proof}

We say $G$ is $k$-regular if $d_{v_1, v_2, \ldots, v_c}=k$ for some $k\in \mathbb{N}$ for every $v_1, v_2, \ldots, v_c\in V(G)$. Moreover, by edge density of a $r$-uniform hypergraph $H$ on $n$ vertices, we will mean the homomorphism density of an edge in $H$ which is $\frac{r!|E(H)|}{n^r}$.

Consider a $r$-uniform hypergraph $G^{(r)}_{n,\rho}$ on $n$ vertices constructed as follows: $(1-\alpha)n$ vertices form a $k$-regular graph with edge density is $\rho$  and the remaining $\alpha n$ vertices form a clique. Call the subgraph formed by the clique $A$, and the subgraph formed by the regular part $B$. Furthermore, any $c$ vertices in $A$  and any $r-c$ vertices in $B$ also form an edge. The parameter $\alpha$ will be chosen later. Also, note that $k=\frac{((1-\alpha)n)^{r-c}\rho}{(r-c)!}+O(n^{r-c-1})$. This can be seen by adding the degrees of all sets of $c$ vertices in $B$, and seeing that each edge gets counted $\binom{r}{c}$ times that way. Thus, 

$$\frac{k \binom{(1-\alpha)n}{c}}{\binom{r}{c}}=|E(B)|=\frac{\rho((1-\alpha)n)^r}{r!},$$ yielding the desired relation between $k$ and $\rho$.

Every set of $c$ distinct vertices $v_1, v_2, \ldots, v_c\in A$ has degree $d_{v_1, v_2, \ldots, v_c} = \binom{n-c}{r-c}$ since that set of vertices forms an edge with any $r-c$ distinct vertices in $G^{(r)}_{n, \rho}$ different from it. Every set of $c$ distinct vertices $v_1, v_2, \ldots, v_c\in B$ has degree $k = \frac{((1-\alpha)n)^{r-c}\rho}{(r-c)!}+O(n^{r-c-1})$ from edges fully in $B$ and, if $r-c \geq c$, then $\binom{(1-\alpha)n-c}{r-2c}\binom{\alpha n}{c}$ gets added to that for edges going between $B$ and $A$. Finally every set of $c$ distinct vertices where $i$ of them come from $A$ and $c-i$ come from $B$ where $\max\{1, 2c-r\} \leq i \leq c-1$ has degree $\binom{\alpha n}{c-i}\binom{((1-\alpha)n}{r-2c+i}$. Thus,

\begin{align*}
t(S^{(r)}&(b,c);G^{(r)}_{n,\rho}) \\
= &\frac{c!}{n^c} \Bigg[ \binom{\alpha n}{c} \left( \frac{(r-c)! \binom{n-c}{r-c}}{n^{r-c}}\right)^b\\
& + \binom{(1-\alpha)n}{c} \left(\frac{(r-c)!}{n^{r-c}} \cdot \left(\frac{((1-\alpha)n)^{r-c}\rho}{(r-c)!}+O(n^{r-c-1}) +\binom{(1-\alpha)n -c}{r-2c}\binom{\alpha n}{c}\right) \right)^b\\
& + \sum_{i=\max\{1,2c-r\}}^{c-1} \binom{\alpha n}{i}\binom{n-\alpha n}{c-i} \left(\frac{(r-c)! \binom{\alpha n}{c-i}\binom{(1-\alpha)n}{r-2c+i}}{n^{r-c}} \right)^b \Bigg],
\end{align*}
where the second and third lines respectively come from sending the center of $S^{(r)}(b,c)$ to $A$ and $B$. The fourth line comes from sending $i$ vertices of the center of $S^{(r)}(b,c)$ to $A$, and the other $c-i$ vertices to the $B$.

As $n\rightarrow \infty$, this goes to

\begin{align*}
\alpha^c + (1-\alpha)^c\left( (1-\alpha)^{r-c} \rho + \frac{(r-c)!}{(r-2c)!}(1-\alpha)^{r-2c}\alpha^c\right)^b \\
+ \sum_{i=\max\{1,2c-r\}}^{c-1} \frac{c!}{i!(c-i)!} \alpha^i (1-\alpha)^{c-i} \left(\frac{(r-c)!\alpha^{c-i} (1-\alpha)^{r-2c+i}}{(c-i)!(r-2c+i)!}\right)^b
\end{align*}

If we choose $\alpha=\rho^{\frac{m}{c}}$, with $\rho<<1$ then the lowest degree part of $t(S^{(r)}(b,c);G^{(r)}_{n,\rho})$ is $\rho^b + \rho^m$. If $m < b$, then $\rho^m$ dominates and if $m \geq b$, then $\rho^b$ dominates.
Thus $\lim_{n\rightarrow \infty, \rho\rightarrow 0} \frac{\log t(S^{(r)}(b,c);G^{(r)}_{n,\rho})}{\log \frac{1}{\rho}}$ equals $-b$ if $b\leq m$ and $-m$ if $b\geq m$. Thus this vector, $(-1,-2,\ldots, -m,-m\ldots, -m)=\ubf_m\in \trop(\G_{\U})$ as claimed.
\end{proof}
This completes the proof of the containment of $C \subseteq \textup{trop}(\mathcal{G}_\mathcal{U})$ and thus Theorem \ref{lem:trop-stars} holds.
\end{proof}

Note that one can find $\trop(\G_{\U'})$ for $\U'\subset \U$ by projecting down $\trop(\G_\U)$ on the appropriate coordinates. Moreover, it follows that any valid binomial inequality for $\G_\mathcal{U}$ is implied by moment inequalities.


\section{Sums of squares profiles and their tropicalizations}\label{sec:SosProfiles}

In this section we introduce {\em sos-profiles} which are semialgebraic sets that
 contain graph profiles. The main result of this section is Theorem~\ref{thm:sos-2by2} which 
 shows that the tropicalization of any sos-profile is a rational polyhedral cone that can be described explicitly. We will use this description in the next section to show 
 strong limitations of sums of squares in recognizing graph density inequalities. 

\subsection{A general framework}
Let $M$ be a symmetric matrix filled with monomials in the finite set of variables $x_1, \ldots, x_s$ for which no $2\times 2$-principal minor is identically zero. For a point
${\bf v} \in \RR^s_{\geq 0}$,
let $M({\bf v})$ denote the matrix obtained by evaluating each entry of $M$ at ${\bf v}$,
and consider the set $$S_M :=  \{ {\bf v} \in \RR^s_{\geq 0} \,:\, M({\bf v}) \succeq 0 \}.$$
Also consider the superset of $S_M$
$$S_M^{2 \times 2} := \{ {\bf v} \in \RR^s_{\geq 0} \,:\, \textup{ all } 2 \times 2 \textup{ principal minors of } M({\bf v}) \textup{ are nonnegative} \}.$$

Both $S_M$ and $S_M^{2 \times 2}$ are (closed) semialgebraic sets in $\RR^s_{\geq 0}$, and they both have the Hadamard property.
Indeed, if ${\bf v}, {\bf w} \in \RR^s_{ \geq 0}$ and $M({\bf v}) \succeq 0$ and $M({\bf w}) \succeq 0$ then
their  Hadamard product which is $M({\bf v} \cdot {\bf w})$ is also positive semidefinite.
Therefore, $S_M$ has the Hadamard property.
If ${\bf v}, {\bf w} \in \RR^s_{ \geq 0} \cap S_M^{2 \times 2}$, then for any $2 \times 2$ principal
minor ${\bf x}^{\bf a}{\bf x}^{\bf b} - {\bf x}^{2\bf c}$ of $M$, we have that
${\bf v}^{\bf {a + b}} \geq  {\bf v}^{2{\bf c}}$ and ${\bf w}^{\bf {a + b}} \geq  {\bf w}^{2{\bf c}}$. Therefore, we also have
${\bf v}^{\bf {a +  b}} {\bf w}^{\bf {a + b}}\geq  {\bf v}^{2{\bf c}} {\bf w}^{2 {\bf c}}$, or equivalently,
$({\bf v} \cdot {\bf w})^{\bf {a+b}} \geq ({\bf v} \cdot {\bf w})^{2 {\bf c}}$. Therefore,
$S_M^{2 \times 2}$ has the Hadamard property.

\begin{lemma}
\begin{enumerate}
\item The set $\log(S_M^{2 \times 2})$ is a polyhedral cone in $\RR^s$.
\item $\textup{trop}(S_M^{2 \times 2}) = \log(S_M^{2 \times 2})$.
\end{enumerate}
\end{lemma}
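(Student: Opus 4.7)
The strategy is to exploit the fact that the defining conditions of $S_M^{2\times 2}$ are pure binomial inequalities in the entries of $\mathbf{v}$, which become homogeneous linear inequalities after applying $\log$.

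For part (1), I would write each entry of $M$ as a monomial, say $M_{ii} = \mathbf{x}^{\mathbf{a}_i}$ on the diagonal and $M_{ij} = \mathbf{x}^{\mathbf{c}_{ij}}$ off-diagonal. The $(i,j)$ principal $2 \times 2$ minor is then
\[
\mathbf{x}^{\mathbf{a}_i + \mathbf{a}_j} - \mathbf{x}^{2\mathbf{c}_{ij}},
\]
and by hypothesis this polynomial is not identically zero. For a strictly positive $\mathbf{v} \in \RR^s_{>0}$, nonnegativity of this minor at $\mathbf{v}$ is equivalent to $\mathbf{v}^{\mathbf{a}_i + \mathbf{a}_j} \geq \mathbf{v}^{2\mathbf{c}_{ij}}$. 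Setting $\mathbf{y} = \log(\mathbf{v})$ and using that $\log$ is strictly monotone on $\RR_{>0}$, this is equivalent to the homogeneous linear inequality
\[
(\mathbf{a}_i + \mathbf{a}_j - 2\mathbf{c}_{ij})^{\!\top}\mathbf{y} \geq 0.
\]
Since $M$ has only finitely many $2 \times 2$ principal minors, $\log(S_M^{2 \times 2})$ is the intersection of finitely many homogeneous linear half-spaces in $\RR^s$, hence a (rational) polyhedral cone.

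For part (2), I would invoke the fact recorded in the paragraph preceding the lemma that $S_M^{2 \times 2}$ has the Hadamard property. Lemma \ref{lem:tropS}(2) then gives
\[
\textup{trop}(S_M^{2\times 2}) \;=\; \textup{cl}\bigl(\textup{cone}(\log (S_M^{2\times 2}))\bigr).
\]
By part (1), $\log(S_M^{2\times 2})$ is already a closed convex cone, so the conical hull and the closure are both vacuous operations, and equality with $\log(S_M^{2\times 2})$ follows immediately.

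There is no substantive obstacle: the only point requiring care is that $\log(S_M^{2\times 2})$ is by definition the image of $S_M^{2\times 2} \cap \RR^s_{>0}$, which is exactly what is needed to pass between the multiplicative and additive (linear) forms of the $2\times 2$ minor conditions. The assumption that no $2\times 2$ principal minor is identically zero ensures that each such minor gives a genuine, well-defined binomial inequality to tropicalize.
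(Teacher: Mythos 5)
Your proof is correct and follows essentially the same argument as the paper: part (1) is the observation that each $2\times 2$ principal minor gives a pure binomial inequality which becomes a homogeneous linear inequality under $\log$, and part (2) combines the Hadamard property with Lemma~\ref{lem:tropS}(2) and the fact that a polyhedral cone is already closed and convex. No gaps; the reasoning matches the paper's proof step for step.
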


\begin{proof}
\begin{enumerate}
\item A point $\mathbf{y} \in \log(S_M^{2 \times 2})$ if and only if $\mathbf{y} = \log({\bf v})$ for some ${\bf v} \in S_M^{2 \times 2} \cap \RR^{s}_{>0}$.
A $2 \times 2$ principal minor of $M$ evaluated at ${\bf v} \in \RR^s_{> 0}$  is of the form ${\bf v}^{\bf a} {\bf v}^{\bf b} - {\bf v}^{2{\bf c}}$.
For a ${\bf v} \in \RR^s_{> 0}$,  ${\bf v}^{{\bf a} + {\bf b}} \geq {\bf v}^{2 {\bf c}}$ if and only if $\sum_{i=1}^s (a_i+b_i - 2c_i) \log(v_i) \geq 0$. Thus
$\log(S_M^{2 \times 2})$ is the polyhedral cone in $\RR^s$ defined by the linear inequalities
$\sum_{i=1}^s (a_i+b_i - 2c_i) y_i \geq 0$ obtained from the $2 \times 2$ principal minors of $M$.

\item We already showed that $S_M^{2 \times 2}$ has the Hadamard property.
Since $\log(S_M^{2 \times 2})$ is a polyhedral cone, it coincides with the closure of both its cone hull and convex hull, and so by
Lemma~\ref{lem:tropS}, $\textup{trop}(S_M^{2 \times 2}) = \log(S_M^{2 \times 2})$.
\end{enumerate}
\end{proof}

\begin{corollary} \label{cor:dual cone}
The dual cone, $\textup{trop}(S_M^{2 \times 2})^\ast \subset \RR^s$, is the rational polyhedral cone
generated by the vectors
$$ \left\{ {\bf a} + {\bf b} - 2{\bf c} \,:\, \begin{pmatrix} {\bf x}^{\bf a} & {\bf x}^{\bf c} \\ {\bf x}^{\bf c} & {\bf x}^{\bf b} \end{pmatrix} \textup{ is a } 2 \times 2
\textup{ principal submatrix of } M \right\}.$$
\end{corollary}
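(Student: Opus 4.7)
The plan is to read off the conclusion directly from the preceding lemma via the standard polarity duality between the H-representation and V-representation of polyhedral cones (Minkowski--Weyl), together with the observation that the generating vectors are integral.

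First, I would invoke part (1) of the preceding lemma, which gives the explicit H-representation
\[
\textup{trop}(S_M^{2\times 2}) \;=\; \log(S_M^{2\times 2}) \;=\; \left\{ {\bf y} \in \RR^s \,:\, ({\bf a}+{\bf b}-2{\bf c})^\top {\bf y} \geq 0 \text{ for every } 2\times 2 \text{ principal minor of } M \right\},
\]
where the sum over $2\times 2$ principal minors is finite since $M$ has finitely many entries. Thus $\textup{trop}(S_M^{2\times 2})$ is a polyhedral cone cut out by finitely many linear inequalities, each of the form $\mathbf{n}_i^\top \mathbf{y} \geq 0$ with $\mathbf{n}_i = {\bf a} + {\bf b} - 2{\bf c}$ for some $2\times 2$ principal submatrix.

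Next, by the Minkowski--Weyl theorem (equivalently, Farkas' lemma / the bipolar theorem for closed convex cones), the dual cone of a polyhedral cone given as an intersection of half-spaces $\{\mathbf{y} : \mathbf{n}_i^\top \mathbf{y} \geq 0,\ i=1,\ldots, N\}$ is exactly the conical hull of the normal vectors $\mathbf{n}_1, \ldots, \mathbf{n}_N$. Applying this to $\textup{trop}(S_M^{2\times 2})$ yields
\[
\textup{trop}(S_M^{2\times 2})^\ast \;=\; \textup{cone}\left( \left\{ {\bf a} + {\bf b} - 2{\bf c} \,:\, \begin{pmatrix} {\bf x}^{\bf a} & {\bf x}^{\bf c} \\ {\bf x}^{\bf c} & {\bf x}^{\bf b} \end{pmatrix} \text{ is a } 2 \times 2 \text{ principal submatrix of } M \right\} \right),
\]
which is precisely the claim.

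Finally, rationality (indeed, integrality) of the generators is immediate: each ${\bf a}, {\bf b}, {\bf c}$ is the exponent vector of a monomial entry of $M$, hence lies in $\mathbb{Z}^s_{\geq 0}$, so ${\bf a} + {\bf b} - 2{\bf c} \in \mathbb{Z}^s$. There is no real obstacle here; the content is entirely in the preceding lemma, and the corollary is a clean application of cone duality. The only small point to be careful about is to confirm that the hypothesis "no $2 \times 2$ principal minor is identically zero" guarantees that each inequality $({\bf a}+{\bf b}-2{\bf c})^\top {\bf y} \geq 0$ is a genuine (non-trivial) constraint arising from an actual AM--GM-type minor, so that the generating set in the dual is well-defined.
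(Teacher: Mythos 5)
Your proof is correct and follows exactly the approach the paper intends: the paper states this as an immediate corollary of the preceding lemma (whose proof gives the H-representation of $\log(S_M^{2\times2})$ as the intersection of half-spaces $({\bf a}+{\bf b}-2{\bf c})^\top {\bf y} \geq 0$), and you pass to the V-representation of the dual via standard polyhedral cone duality. The only superfluous remark is your final sentence about the ``no identically zero minor'' hypothesis — the corollary holds regardless, since an identically vanishing minor just contributes the zero vector as a (harmless) generator — but that does not affect correctness.
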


\begin{lemma} $\trop(S_M) = \textup{cl}(\textup{conv}(\log(S_M)))$.
\end{lemma}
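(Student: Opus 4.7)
The plan is to reduce the claim immediately to Lemma~\ref{lem:tropS}(3). The hypotheses of that lemma require two things about $S_M$: that $S_M$ has the Hadamard property, and that $\mathbf{1} \in S_M$. The first was already verified in the paragraph introducing $S_M$ (for $\mathbf{v},\mathbf{w} \in S_M$, the matrix $M(\mathbf{v}\cdot\mathbf{w})$ is the Hadamard product $M(\mathbf{v}) \circ M(\mathbf{w})$, and Hadamard products of PSD matrices are PSD by the Schur product theorem), so nothing new needs to be argued there.

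The only thing left is the observation that $\mathbf{1} \in S_M$. Since every entry of $M$ is a monomial in $x_1,\ldots,x_s$, evaluating any such monomial at $\mathbf{1} = (1,\ldots,1)$ yields $1$. Hence $M(\mathbf{1})$ is the all-ones matrix $J$ of size equal to that of $M$, and $J = \mathbf{1}_n \mathbf{1}_n^\top$ is a rank-one positive semidefinite matrix. Therefore $M(\mathbf{1}) \succeq 0$, which means $\mathbf{1} \in S_M$.

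With both hypotheses in hand, Lemma~\ref{lem:tropS}(3) applies to $\mathcal{S} = S_M$ and yields
\[
\trop(S_M) \;=\; \textup{cl}\bigl(\textup{conv}(\log(S_M))\bigr),
\]
which is exactly the statement to be proved. There is no real obstacle here; the content of the lemma is entirely captured by verifying that the all-ones matrix shows up as $M(\mathbf{1})$, so that the base case needed to invoke convex (rather than merely conic) hulls in Lemma~\ref{lem:tropS} is present.
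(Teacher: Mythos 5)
Your proposal is correct and matches the paper's own proof essentially verbatim: both observe that every entry of $M$ is a monomial, so $M(\mathbf{1})$ is the all-ones matrix (hence PSD), giving $\mathbf{1}\in S_M$, and both then invoke the Hadamard property together with Lemma~\ref{lem:tropS}(3).
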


\begin{proof} The all-ones matrix $M({\bf 1})$ is positive semidefinite and hence ${\bf 1} \in S_M$. The result now follows from $S_M$ having the
Hadamard property and Lemma~\ref{lem:tropS}.
\end{proof}

\begin{theorem} \label{thm:tropSM=tropSM2by2}
Let $M$ be a symmetric matrix filled with monomials in $x_1, \ldots, x_s$ such that no $2\times 2$ minor of $M$ is identically $0$.
Suppose also that $S_M^{2\times 2}$ has interior in $\mathbb{R}^s_{>0}$.  Then $\textup{trop}(S_M) = \textup{trop}(S_M^{2 \times 2})$.
\end{theorem}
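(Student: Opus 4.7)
The easy containment $\trop(S_M) \subseteq \trop(S_M^{2\times 2})$ follows from the set-theoretic inclusion $S_M \subseteq S_M^{2\times 2}$, which holds because positive semidefiniteness forces every principal minor to be nonnegative. For the reverse containment, the plan is to show that every point of $\log(S_M^{2\times 2}) = \trop(S_M^{2\times 2})$ already lies in $\trop(S_M)$ by exploiting the fact that $\trop(S_M)$ is a closed cone.

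The core idea is a scaling trick. Fix a point ${\bf y}$ in the (nonempty, full-dimensional) interior of the polyhedral cone $\log(S_M^{2\times 2})$, and set ${\bf v} = e^{\bf y} \in \mathbb{R}^s_{>0}$. Being in the interior means every defining $2\times 2$ principal minor inequality is strict at ${\bf v}$: for every pair $i \neq j$,
\[
{\bf v}^{{\bf a}_{ii} + {\bf a}_{jj}} \;>\; {\bf v}^{2{\bf a}_{ij}},
\]
where ${\bf a}_{ij}$ is the exponent vector of the monomial $M_{ij}$. (The hypothesis that no $2\times 2$ minor is identically zero, together with $S_M^{2\times 2}$ having interior in $\mathbb{R}^s_{>0}$, rules out degenerate cases with identically vanishing diagonals.) Now consider the sequence ${\bf v}^k$ and let $D_k$ be the positive diagonal matrix with entries ${\bf v}^{k{\bf a}_{ii}/2}$. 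Then the congruent matrix $D_k^{-1}M({\bf v}^k)D_k^{-1}$ has $1$'s on the diagonal and off-diagonal entries ${\bf v}^{k({\bf a}_{ij} - ({\bf a}_{ii}+{\bf a}_{jj})/2)}$. The strict inequalities above say each such exponent vector pairs negatively with ${\bf y}$, so these off-diagonal entries tend to $0$ as $k \to \infty$, and the congruent matrix converges to the identity.

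Hence, for all sufficiently large $k$, the matrix $D_k^{-1} M({\bf v}^k) D_k^{-1}$, and therefore $M({\bf v}^k)$ itself, is positive semidefinite. This means ${\bf v}^k \in S_M$, so $k {\bf y} = \log({\bf v}^k) \in \log(S_M) \subseteq \trop(S_M)$. Since $\trop(S_M)$ is a cone (Lemma~\ref{lem:tropS}), we conclude ${\bf y} \in \trop(S_M)$. Thus the interior of the polyhedral cone $\trop(S_M^{2\times 2})$ is contained in $\trop(S_M)$; taking closures and using Lemma~\ref{lem:tropS} once more ($\trop(S_M)$ is closed and the interior is dense in a full-dimensional closed convex cone) gives $\trop(S_M^{2\times 2}) \subseteq \trop(S_M)$.

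The main obstacle to watch out for is the passage from ``strict $2\times 2$ minor inequalities'' to actual positive semidefiniteness of $M({\bf v}^k)$. A priori, satisfying all $2\times 2$ principal minor inequalities is much weaker than being PSD, so the equality $\trop(S_M) = \trop(S_M^{2\times 2})$ is not obvious without the scaling argument: it is the asymptotic regime $k \to \infty$ (equivalently, shrinking toward the origin in ${\bf v}$-coordinates, cf.\ Lemma~\ref{lem:tropS}(4)) that kills the off-diagonal entries of the suitably normalized matrix and turns the $2\times 2$ conditions into genuine PSD-ness.
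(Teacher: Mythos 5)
Your proof is correct and follows the same overall strategy as the paper: pick $\mathbf{v}$ with $\log(\mathbf{v})$ in the interior of $\log(S_M^{2\times 2})$, pass to powers $\mathbf{v}^k$, and show that $M(\mathbf{v}^k)$ becomes positive semidefinite for $k$ large, then invoke that $\trop(S_M)$ is a closed cone. The one place where you differ is the final step showing PSD-ness. The paper expands each $l\times l$ principal minor by permutations, compares each non-identity term $T$ to the product $T'$ of diagonal entries (using the strict $2\times 2$ inequalities to get $|T| < T'$, with the gap growing with $k$), and argues that for $k$ large the diagonal term dominates the finitely many others. You instead perform a diagonal congruence $D_k^{-1} M(\mathbf{v}^k) D_k^{-1}$ with $D_k = \mathrm{diag}(\mathbf{v}^{k\mathbf{a}_{ii}/2})$, observe that the normalized off-diagonal entries $\mathbf{v}^{k(\mathbf{a}_{ij} - (\mathbf{a}_{ii}+\mathbf{a}_{jj})/2)}$ are $k$-th powers of numbers in $(0,1)$ and hence tend to $0$, so the congruent matrix converges to the identity; openness of the PD cone then gives PSD for $k$ large. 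This is a cleaner packaging of the same asymptotic fact: it replaces the paper's explicit comparison of Laplace-expansion terms with a single appeal to continuity of eigenvalues (or openness of positive definiteness) under congruence. Both arguments use all hypotheses in the same way: the ``no $2\times 2$ minor identically zero'' condition ensures the exponent vectors $\mathbf{a}_{ij} - (\mathbf{a}_{ii}+\mathbf{a}_{jj})/2$ are genuinely nonzero so the off-diagonal ratios are strictly less than $1$, and the interior hypothesis supplies the needed strictness.
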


\begin{proof}
Since $S_M \subseteq S_M^{2 \times 2}$ we have that
$$\textup{trop}(S_M) = \textup{cl}(\textup{conv}(\log(S_M)) \subseteq \textup{cl}(\textup{conv}(\log(S_M^{2 \times 2}))= \textup{trop}(S_M^{2 \times 2}).$$
To show the reverse containment, we use the following strategy. Pick a ${\bf v} \in \textup{int}(S_M^{2 \times 2})$, or equivalently, $\log({\bf v}) \in \textup{int}(\log(S_M^{2 \times 2})) =  \textup{int}(\textup{trop}(S_M^{2 \times 2}))$. Then show that for a large enough positive integer $k$, $k \log({\bf v})$ lies in $\log(S_M) \subseteq \textup{trop}(S_M)$. Since $\textup{trop}(S_M)$ is a cone,  it must follow that $\log({\bf v}) \in \textup{trop}(S_M)$. Thus we have that
$\textup{int}(\textup{trop}(S_M^{2 \times 2})) \subseteq \textup{trop}(S_M)$ which means that $\textup{trop}(S_M^{2 \times 2}) \subseteq \textup{trop}(S_M)$
since the tropicalizations are closed sets.

Consider $\log({\bf v})$ in the interior of $\log(S_M^{2 \times 2})$. Then ${\bf v}$ lies in the interior of $S_M^{2 \times 2}$ and all
$2 \times 2$ principal minors of $M({\bf v})$ are strictly positive.
Since $\log(S_M^{2 \times 2})$ is a cone,
for any $k > 0$ and integer,
$k \log({\bf v}) = \log({\bf v}^k)$ lies in
$\log(S_M^{2 \times 2})$. We will now argue that if $k$ is large enough then ${\bf v}^k$ is also in
$S_M$ or equivalently, that all principal minors of $M({\bf v}^k)$ are positive. Recall that no $2 \times 2$ principal minor of
$M$ is identically zero.
A $2 \times 2$ principal minor of $M({\bf v}^k)$, namely
$ {\bf v}^{k({\bf a} + {\bf b})} - {\bf v}^{k(2{\bf c})} = ({\bf v}^{{\bf a} + {\bf b}})^k - ({\bf v}^{2{\bf c}})^k$,
is positive since ${\bf v}^{{\bf a} + {\bf b}} > - {\bf v}^{2{\bf c}}$.
Now consider a $l \times l$ principal minor of $M({\bf v}^k)$, and
a term $T$ in the Laplace expansion of its determinant indexed by a non-identity permutation.
Replace every non-diagonal entry in $T$ from position $(i,j), i \neq j$ by the product of the square roots of
the diagonal entries in positions $(i,i)$ and $(j,j)$ in this $l \times l$ principal minor.
Let $T'$ denote the modification of $T$ obtained by replacing all
non-diagonal terms in $T$ as above.
Since all $2 \times 2$ minors of $M({\bf v}^k)$ are positive, we get that
$T' > T$. Note that $T'$ is the product of diagonal entries in the $l \times l$ principal minor we are considering.
Since there are only finitely many terms in the Laplace expansion of the determinant of the $l \times l$ principal minor, we can choose $k$ large enough to ensure that $T'$ is so much bigger than the other terms making the entire determinant positive.

\end{proof}

\begin{example} We now give an example to illustrate the necessity of the condition that no $2 \times 2$ principal minor of
$M$ should be identically zero.
Consider the matrix
$$\begin{pmatrix}x&x^2&x^2\\ x^2&x^3&x^2\\ x^2&x^2&1\end{pmatrix}$$
in which the upper left $2\times 2$ minor is identically $0$. The values of $x$ for which all $2\times 2$ principal minors are
nonnegative is precisely $0\leq x \leq 1$. Therefore, $S_M^{2 \times 2} = [0,1]$ and
$\operatorname{trop}(S_M^{2 \times 2}) = \mathbb{R}_{\leq 0}$.
The determinant of this matrix is $-(x-1)^2x^5$, so the only values that make the matrix positive semidefinite are $x=0$ and $x=1$.
Therefore, $\operatorname{trop}(S_M)$ is the origin which is strictly contained in $\operatorname{trop}(S_M^{2 \times 2})$.
\end{example}

The main take away from what we have so far is that even though $S_M$ might be strictly contained in $S_M^{2 \times 2}$, their
tropicalizations agree and form a polyhedral cone with an explicit inequality description given by the $2 \times 2$ principal minors of $M$.

\subsection{Specialization to graphs}

We now specialize the above results to the case of graphs. For this we begin with a few definitions about the {\em gluing algebra} of graphs,
the reader is referred to Lov\'{a}sz~\cite{LovaszBook} for a broader exposition. A graph is {\em partially labeled} if a subset of its vertices are labeled with elements of $\mathbb{N} := \{1,2,3,\ldots\}$ such that no vertex receives more than one label.
If no vertices of $H$ are labeled then $H$ is {\em unlabeled}.
Let $\A$ denote the vector space of all formal finite $\RR$-linear combinations of partially labeled graphs without isolated vertices, including
the empty graph with no vertices which we denote as $1$. We call an element $a = \sum \alpha_i H_i$ of $\A$ a {\em graph combination}, each $\alpha_i H_i$ a {\em term} of $a$, and each $H_i$ a {\em constituent graph}  of $a$.
Let $\A_\emptyset$ denote the subspace of $\A$ spanned by unlabeled graphs.
We view elements $a \in \A_\emptyset$ as functions that can be evaluated on unlabeled graphs $G$ via homomorphism densities, namely
$t(a;G) = \sum \alpha_i t(H_i;G)$. An element $a = \sum \alpha_i H_i$ of $\A_\emptyset $ is called {\em nonnegative} if $ \sum \alpha_i t(H_i;G)\geq 0$ for all graphs $G$.

The vector space $\A$ has a product defined as follows. For two labeled graphs   $H_1$ and $H_2$, form the new labeled graph $H_1H_2$  by gluing together the vertices in the two graphs with the same label, and keeping only one copy of any edge that may have doubled in the process.
Equipped with this product, $\A$ becomes an $\RR$-algebra with the empty graph $1$ as its multiplicative identity.
The algebra $\A$ admits a simple linear map into $\A_\emptyset$ that removes the labels in a graph combination to create a graph combination of unlabeled graphs. We call this map {\em unlabeling} and denote it by $[[ \cdot ]]$.
A {\em sum of squares (sos)} in $\A_\emptyset$ is a finite sum of unlabeled squares of graph combinations $a_i \in \A$, namely,
$\sum [[ a_i^2 ]]$. A sum of squares is a nonnegative graph combination.

A $d$-{\em sos graph combination}
is an sos $a= \sum [[a_j^2]]$ where each constituent graph in $a_j$ is partially labeled and has at most $d$ edges. This means that every constituent graph of  $a$ has at most $2d$ edges.
For a fixed $d \in \mathbb{N}$, it follows from results of \cite{MR3856524} (see also \cite{Potechin}) that any $d$-sos graph combination can be written using only finitely many, say $\ell(d)$, labels.
Let $\mathcal{B}_d$ denote the set containing the empty graph $1$ with no vertices, and
all partially labeled graphs with labels $1,\dots, \ell(d)$, and at most $d$ edges and no isolated vertices.
Define
$$\mathcal{V}_d = \{ [[ab]] \textup{ connected  } \,:\, a,b \in \mathcal{B}_d \} \backslash \{1\}.$$
A $d$-sos graph combination is a sum of squares of graph combinations in the span of $\mathcal{B}_d$. i.e.,
if $a = \sum [[a_j^2]]$ is $d$-sos then $a_j \in \textup{span}(\mathcal{B}_d)$.
Also, any term in a $d$-sos graph combination $a$ is a monomial in the elements of $\mathcal{V}_d$ (including constant terms)
and each constituent graph in $a$ has at most $2d$ edges.

\begin{definition}  \begin{enumerate}
\item The $d$-{\em sos-profile}, denoted $\mathcal{S}_d$, is the set of all
${\bf v} \in \RR^s_{\geq 0}$ such that $a({\bf v}) \geq 0$ for all $d$-sos graph combinations $a$ and where $s=|\mathcal{V}_d|$.
\item The $(\mathcal{U},d)$-{\em sos-profile} denoted as $\mathcal{S}_{\mathcal{U},d}$ is the projection of $\mathcal{S}_d$ on coordinates corresponding to graphs in $\mathcal{U}$.
\end{enumerate}
\end{definition}

Let $\mathcal{M}_d$ be the {\em moment matrix} of size $|\mathcal{B}_d| \times |\mathcal{B}_d|$ which is defined as the matrix
with rows and columns indexed by
the graphs in $\mathcal{B}_d$ and whose $(A,B)$-entry is  $[[AB]]$. Such a matrix is called a {\em connection matrix} in \cite{LovaszBook}.
Every entry in $\mathcal{M}_d$ is a monomial in the elements of $\mathcal{V}_d$ (including $1$)
and the corresponding graph has at most $2d$ edges. Further, the entries of $\mathcal{M}_d$ and the monomials that appear in $d$-sos graph combinations are the same.

\begin{lemma}  \label{lem:sos-profile is SMd}
The $d$-sos-profile $\mathcal{S}_d = \{ {\bf v} \in \RR^s_{ \geq 0} \,:\, \mathcal{M}_d({\bf v}) \succeq 0 \} = S_{\mathcal{M}_d}$.
\end{lemma}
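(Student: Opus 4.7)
The plan is to prove the equality of the two sets by two inclusions, exploiting the fact that the moment matrix $\mathcal{M}_d$ is exactly the Gram matrix realizing $d$-sos graph combinations as quadratic forms in the coefficient vectors $a_j \in \textup{span}(\mathcal{B}_d)$.

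The central observation is the following. For any $a_j = \sum_{B \in \mathcal{B}_d} \alpha_{j,B} B \in \textup{span}(\mathcal{B}_d)$, bilinearity of the gluing product and linearity of unlabeling give
\[
[[a_j^2]] \;=\; \sum_{A, B \in \mathcal{B}_d} \alpha_{j,A}\alpha_{j,B}\,[[AB]] \;=\; \alpha_j^\top \mathcal{M}_d\, \alpha_j,
\]
where $\alpha_j = (\alpha_{j,B})_{B \in \mathcal{B}_d}$. Evaluating an unlabeled graph at ${\bf v}$ means reading off its homomorphism density, and since homomorphism densities factor over connected components, each entry $[[AB]]$ of $\mathcal{M}_d$ evaluates at ${\bf v}$ to a monomial in the coordinates indexed by $\mathcal{V}_d$ (the connected components of $[[AB]]$, all of which are among the generators of $\mathcal{V}_d$ or equal to the empty graph $1$ which contributes the factor $1$). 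Thus $\mathcal{M}_d({\bf v})$ is well-defined as a real symmetric matrix for every ${\bf v} \in \RR^s_{\geq 0}$.

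For the inclusion $S_{\mathcal{M}_d} \subseteq \mathcal{S}_d$, take ${\bf v} \in \RR^s_{\geq 0}$ with $\mathcal{M}_d({\bf v}) \succeq 0$, and let $a = \sum_j [[a_j^2]]$ be an arbitrary $d$-sos graph combination, where $a_j \in \textup{span}(\mathcal{B}_d)$ by the discussion preceding the lemma. Then
\[
a({\bf v}) \;=\; \sum_j [[a_j^2]]({\bf v}) \;=\; \sum_j \alpha_j^\top \mathcal{M}_d({\bf v})\,\alpha_j \;\geq\; 0,
\]
so ${\bf v} \in \mathcal{S}_d$. For the reverse inclusion $\mathcal{S}_d \subseteq S_{\mathcal{M}_d}$, fix ${\bf v} \in \mathcal{S}_d$ and any vector $\alpha \in \RR^{|\mathcal{B}_d|}$. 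Then $a_\alpha := \sum_{B \in \mathcal{B}_d} \alpha_B B$ lies in $\textup{span}(\mathcal{B}_d)$, so $[[a_\alpha^2]]$ is a $d$-sos graph combination, and the identity above yields $\alpha^\top \mathcal{M}_d({\bf v}) \alpha = [[a_\alpha^2]]({\bf v}) \geq 0$. Since $\alpha$ was arbitrary, $\mathcal{M}_d({\bf v}) \succeq 0$.

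There is no real obstacle here; the only point to check with care is that $\mathcal{M}_d({\bf v})$ makes sense as a numerical matrix, i.e.\ that every entry $[[AB]]$ has its connected components among the coordinates of $\mathcal{V}_d$. This follows directly from the definition $\mathcal{V}_d = \{[[ab]]\text{ connected} : a,b \in \mathcal{B}_d\} \setminus \{1\}$, since any connected component of $[[AB]]$ is obtained by restricting $A$ and $B$ to the labels and unlabeled vertices of that component, yielding partially labeled graphs $A',B' \in \mathcal{B}_d$ with $[[A'B']]$ equal to that connected component. With this verified, the proof reduces to the standard Gram matrix / sum of squares correspondence applied in the gluing algebra $\mathcal{A}$.
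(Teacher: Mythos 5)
Your proof is correct and takes essentially the same approach as the paper: both use the Gram-matrix correspondence between $d$-sos graph combinations and positive semidefinite forms over $\textup{span}(\mathcal{B}_d)$. The paper phrases this compactly as ``any $d$-sos is $\langle \mathcal{M}_d, Q\rangle$ for some $Q \succeq 0$, and $\mathcal{M}_d({\bf v}) \succeq 0$ iff $\langle \mathcal{M}_d({\bf v}), Q\rangle \geq 0$ for all $Q \succeq 0$,'' whereas you unpack this into the two inclusions, testing against rank-one matrices $\alpha\alpha^\top$ individually; these are the same argument since any PSD $Q$ is a conic combination of rank-one PSD matrices. You also spell out the (true but routine) fact that every entry $[[AB]]$ of $\mathcal{M}_d$ evaluates to a monomial in $\mathcal{V}_d$, a point the paper states without proof just before the lemma; your restriction argument is sound, with the only small care needed being to discard any isolated vertices when forming $A'$ and $B'$ so that they lie in $\mathcal{B}_d$.
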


\begin{proof}
For any $Q \succeq 0$, $\langle \mathcal{M}_d, Q \rangle$ is a $d$-sos graph combination $\sum_j [[a_j^2]]$
since $a_j \in \textup{span}(\mathcal{B}_d)$.
Conversely, any $d$-sos graph combination can be written as $\langle \mathcal{M}_d, Q \rangle$ for some $Q \succeq 0$.
Therefore, ${\bf v} \in \RR^s_{\geq 0}$ lies in $\mathcal{S}_d$ if and only if $ \langle \mathcal{M}_d({\bf v}), Q \rangle \geq 0$ for all
$Q \succeq 0$ which happens if and only if $\mathcal{M}_d({\bf v}) \succeq 0$.
\end{proof}

The above lemma shows that the sos-profile $\mathcal{S}_d$ is a
semialgebraic set in $\RR^s_{\geq 0}$ since the condition $\mathcal{M}_d({\bf v}) \succeq 0$ is equivalent to the (finitely many) principal minors of
$\mathcal{M}_d({\bf v})$ being nonnegative, and each such minor is a polynomial in the elements of $\mathcal{V}_d$.

We now note the connection between the graph profile $\mathcal{G}_{\mathcal{V}_d}$ and the $d$-sos-profile $\mathcal{S}_d$.
Recall that  $\mathcal{G}_{\mathcal{V}_d}$ is the closure of all points $(t(C;G)  \,:\, C \in \mathcal{V}_d)$ as $G$ varies over all unlabeled graphs.
Tautologically, $\mathcal{G}_{\mathcal{V}_d}$ is also the set of all points in $\RR^s_{\geq 0}$ on which all nonnegative graph combinations in
the polynomial ring $\RR[\mathcal{V}_d]$ are nonnegative. Since there is no bound to the number of edges in the constituent graphs of such nonnegative graph combinations, the profile
$\mathcal{G}_{\mathcal{V}_d}$ may not be semialgebraic, and as mentioned in the introduction, the profile of edge and triangle is not semialgebraic. A $d$-sos graph combination is also a nonnegative graph combination in
$\RR[\mathcal{V}_d]$, but one in which the constituent graphs cannot have more than $2d$ edges. Therefore,
we immediately get that the graph profile $\mathcal{G}_{\mathcal{V}_d}$ is contained in $\mathcal{S}_d$.
Indeed, if ${\bf v}\in \mathbb{R}^s_{\geq 0}\cap \mathcal{G}_{\mathcal{V}_d}$, then ${\bf v}{\bf v}^\top = \mathcal{M}_d({\bf v}) \succeq 0$. If we are given a specific set of finite connected graphs $\mathcal{U}$, then $\mathcal{U} \subseteq \mathcal{V}_d$ for $d$ large enough. Certainly, any finite connected graph $H\in \mathcal{U}$ can be written as the symmetrized product of two partially labeled finite graphs, namely the empty graph and an unlabeled copy of $H$ itself, which is in $\mathcal{V}_{d}$ for any $d\geq |E(H)|$. We then have that $\mathcal{G}_\mathcal{U}$ is a projection of the graph profile $\mathcal{G}_{\mathcal{V}_d}$, and contained in  the
$(\mathcal{U},d)$-sos-profile $\mathcal{S}_{\mathcal{U},d}$.

\begin{example}\label{ex:Sd}
Consider the graph profile $\mathcal{G}_\mathcal{U}$ for $\mathcal{U}=\{\uvedge, \uHthree \}$ from Figure \ref{fig:edge-triangle-profile1}. We will show that $(0.7, 0.12)$ is not in $\mathcal{G}_\mathcal{U}$. We know that $\mathcal{G}_\mathcal{U}$ is contained in $\mathcal{S}_{\mathcal{U},d}$ for all $d \geq 2$.

From Lemma \ref{lem:sos-profile is SMd}, $\mathcal{S}_d$ contains all the points $\bf{v}$ that make $\mathcal{M}_d({\bf v})\succeq 0$. To show that $(0.7, 0.12)\not\in \mathcal{G}_\mathcal{U}$, we can instead prove $(0.7,0.12) \not\in \mathcal{S}_{\mathcal{U},2}$ by observing that any point ${\bf v}\in \mathbb{R}^s$ where the $\uvedge$ component is $0.7$ and the $\uHthree$ component is $0.12$ is such that $\mathcal{M}_2({\bf v})\not \succeq 0$. Indeed, consider the principal submatrix of $\mathcal{M}_2$ corresponding to the graphs
$$\vedge{1}{}, \vedge{2}{}, \vedge{1}{2}, \Htwo{1}{2}{3}, \Htwo{2}{1}{3} \in \mathcal{B}_2:$$

$$\begin{pmatrix}
\uHtwo & \uvedge\uvedge & \uHtwo & \uthreestar & \upthree\\
\uvedge\uvedge & \uHtwo & \uHtwo & \upthree & \uthreestar\\
\uHtwo& \uHtwo & \uvedge & \uHtwo & \uHtwo \\
\uthreestar & \upthree & \uHtwo & \uHtwo & \uHthree \\
\upthree & \uthreestar & \uHtwo & \uHthree & \uHtwo
\end{pmatrix}.$$

The bottom $3\times 3$ principal minor and the top $2\times 2$ principal minor,

$$\det\begin{pmatrix}
\uvedge & \uHtwo & \uHtwo \\
\uHtwo & \uHtwo & \uHthree \\
\uHtwo & \uHthree & \uHtwo
\end{pmatrix} \quad \textup{and } \quad \det\begin{pmatrix}
\uHtwo & \uvedge\uvedge\\
\uvedge\uvedge & \uHtwo
\end{pmatrix},$$
yield that $-2\uHtwo^3+0.94\uHtwo^2-0.01008\geq 0$  and $\uHtwo^2 -0.7^4 \geq 0$ when $\uvedge=0.7$ and $\uHthree=0.12$. However, there is no value of $\uHtwo$ in $[0,1]$ that satisfy both inequalities. Thus the point $(0.7,0.12)\not\in \mathcal{S}_{\mathcal{U},2}$.

Alternatively, to show that $(0.7, 0.12)\not\in \mathcal{G}_\mathcal{U}$, we could have shown that $(0.7,0.12) \not\in \mathcal{S}_{\mathcal{U},3}$ by thinking about $\mathcal{S}_3$ as the set of points that evaluate nonnegatively on all $3$-sos graph combinations. As seen in \cite{LovaszBook}, the Goodman bound can be written as a $3$-sos combination:
 $$ [[(\vedge{2}{3}-\Htwo{2}{1}{3}-\Htwo{3}{1}{2}+\Hthree{1}{2}{3})^2]]+[[(\vedge{1}{}-\vedge{2}{})^2]]=\uHthree-2\uvedge\uvedge+\uvedge,$$ so $\uHthree-2\uvedge\uvedge+\uvedge \geq 0$ is a valid inequality for $\mathcal{S}_3$ and $\mathcal{S}_{\mathcal{U},3}$. Since $0.12-2\cdot 0.7^2+0.7<0$, $(0.7, 0.12)\not\in \mathcal{S}_{\mathcal{U},3}$.

Finally, we note that $\mathcal{S}_{\mathcal{U},3}$ strictly contains $\mathcal{G}_\mathcal{U}$ in this case, that is, there are points $\bf{v}$ such that $\mathcal{M}_3({\bf v})\succeq 0$, but such that the projection of ${\bf v}$ on the coordinates corresponding to graphs in $\mathcal{G}_\mathcal{U}$ is not in $\mathcal{G}_{\mathcal{U}}$. It was shown in \cite{Potechin} that one needs sums of squares of arbitrarily high degree to carve out $\mathcal{G}_\mathcal{U}$ when $\mathcal{U}=\{\uvedge,\uHthree\}$.
\end{example}

\medskip

Lemma~\ref{lem:sos-profile is SMd} suggests that we may be able to apply Theorem~\ref{thm:tropSM=tropSM2by2} to $\mathcal{S}_d$.  The first hurdle
is that some principal minors of $\mathcal{M}_d$ might be identically zero. For example, the principal minor with rows (columns)
indexed by $\uvedge$ and $\uHtwo$, is $\uvedge^2 \uHtwo^2 - \uvedge \ \uHtwo \uvedge\ \uHtwo = 0$.

Say that two partially labeled graphs are isomorphic if they are isomorphic as labeled graphs.
In particular, two isomorphic labeled graphs have the same labels.

\begin{lemma}
A $2\times 2$-principal minor in $\mathcal{M}_d$ is identically zero if and only if the corresponding rows (columns) correspond to two graphs
with isomorphic labeled components.
\end{lemma}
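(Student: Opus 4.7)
The starting point is to rewrite the $2\times 2$ principal minor indexed by $A, B\in\mathcal{B}_d$ as $[[A^2]][[B^2]]-[[AB]]^2$, a difference of two \emph{monomials} in the polynomial ring $\mathbb{R}[\mathcal{V}_d]$ obtained by factoring each entry of $\mathcal{M}_d$ into its connected components (each an element of $\mathcal{V}_d$). Hence the minor is identically zero if and only if the monomials coincide, equivalently, iff the multiset identity
$\fac([[A^2]])\cup\fac([[B^2]]) = 2\cdot\fac([[AB]])$
holds, where $\fac(\cdot)$ denotes the multiset of connected components viewed as elements of $\mathcal{V}_d$.

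For the ``if'' direction, write $A=A_\ell\sqcup A_u$ and $B=B_\ell\sqcup B_u$, where $A_\ell$ collects the components of $A$ containing at least one label, and similarly for $B$. The labeled isomorphism $A_\ell\cong B_\ell$ directly yields labeled isomorphisms $A_\ell\cdot A_\ell \cong A_\ell\cdot B_\ell \cong B_\ell\cdot B_\ell$ (substitute an isomorphic copy), so after unlabeling, $[[A_\ell^2]] = [[A_\ell B_\ell]] = [[B_\ell^2]]$. Using the multiplicativity $[[A^2]]=[[A_\ell^2]][[A_u]]^2$, $[[AB]]=[[A_\ell B_\ell]][[A_u]][[B_u]]$, $[[B^2]]=[[B_\ell^2]][[B_u]]^2$, the identity $[[A^2]][[B^2]]=[[AB]]^2$ follows.

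For the ``only if'' direction, the same factorization lets us cancel $[[A_u]]^2[[B_u]]^2$ and reduces to the case in which every component of $A$ and $B$ contains a label. Write $A=A_1\sqcup\cdots\sqcup A_p$ and $B=B_1\sqcup\cdots\sqcup B_q$ as disjoint unions of connected labeled components. Since distinct components of one graph have disjoint label sets, $\fac([[A^2]])=\{[[A_i^2]]\}_i$ and $\fac([[B^2]])=\{[[B_j^2]]\}_j$, while components of $[[AB]]$ correspond to connected components of the bipartite ``label-sharing'' graph $H$ on $\{A_i\}\cup\{B_j\}$ with $A_i\sim B_j$ iff $L(A_i)\cap L(B_j)\neq\emptyset$. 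Comparing vertex and edge totals on both sides of the multiset identity, one obtains $L(A)=L(B)$ and $E_{LL}(A)=E_{LL}(B)$. Comparing numbers of components gives $p+q=2\cdot\#\mathrm{comp}(H)$, and since $L(A)=L(B)$ every $A_i$ is connected to some $B_j$ in $H$, so each component of $H$ contains at least two nodes; hence each component consists of exactly one $A_i$ and one $B_j$, paired bijectively with $L(A_i)=L(B_{\pi(i)})$.

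It remains to handle each matched pair $(A_i, B_{\pi(i)})$, where the identity reduces to $[[A_i^2]]=[[A_i B_{\pi(i)}]]=[[B_{\pi(i)}^2]]$ as unlabeled connected graphs, and one must conclude $A_i\cong B_{\pi(i)}$ as \emph{labeled} graphs. The plan is to exploit the canonical involution $\sigma$ on $A_i\cdot A_i$ that fixes $L:=L(A_i)$ pointwise and swaps the two copies of the unlabeled vertices: its fixed-point set recovers $L$ and its quotient recovers $A_i$ as a labeled graph. Transported through the isomorphism $[[A_i^2]]\cong [[A_i B_{\pi(i)}]]$, this exhibits $[[A_i B_{\pi(i)}]]$ as a ``double'' with quotient $A_i$; symmetrically, through $[[B_{\pi(i)}^2]]\cong [[A_i B_{\pi(i)}]]$, as a ``double'' with quotient $B_{\pi(i)}$. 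Comparing these two realizations — using that $L$ is determined structurally (e.g.\ as the fixed-point set of the involution whose quotient vertex count equals $|V(A_i)|$, plus the already-established agreement $L(A_i)=L(B_{\pi(i)})$ and $E_{LL}(A_i)=E_{LL}(B_{\pi(i)})$) — forces $A_i\cong B_{\pi(i)}$ as labeled graphs, completing the proof. The main obstacle is this last step: the unlabeled isomorphism $[[A_i^2]]\cong [[A_i B_{\pi(i)}]]$ need not send labels to labels, so some care is required to recover the labeled structure intrinsically from the common unlabeled graph and to rule out spurious automorphisms that mix the ``labeled'' and ``unlabeled'' vertices.
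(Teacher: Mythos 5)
Your argument takes a genuinely different route from the paper's, and it has a gap at exactly the step you yourself flag as problematic. The paper's proof is algebraic: it rewrites the minor as $\tfrac12[[(H_1\tilde{H}_2-\tilde{H}_1H_2)^2]]$ by introducing a relabeled copy $\tilde{H}_i$ with a disjoint label set, then invokes Lemma~2.3 of \cite{BRST18} (that $[[a^2]]=0$ forces $a=0$ in the gluing algebra) to upgrade the vanishing of the minor to the equality of \emph{labeled} graphs $H_1\tilde{H}_2=\tilde{H}_1H_2$; from there, matching components by label in $\{1,\dots,N\}$ versus $\{N+1,\dots,2N\}$ gives $H_1^l=H_2^l$ immediately. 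Your proof instead tries to extract everything combinatorially from the equality of \emph{unlabeled} monomials. The ``if'' direction and the bookkeeping in the ``only if'' direction are fine: the vertex and edge counts do give $L(A)=L(B)$ and $E_{LL}(A)=E_{LL}(B)$; the component count does give $p+q=2\cdot\#\mathrm{comp}(H)$; and since every $A_i$ meets some $B_j$ and vice versa, each component of $H$ has exactly two nodes, so the components pair up with $L(A_i)=L(B_{\pi(i)})$. All of that is correct and is a nice reduction.

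The gap is the last step, and it is not a cosmetic one. From $[[A_i^2]]=[[A_iB_{\pi(i)}]]=[[B_{\pi(i)}^2]]$ as unlabeled graphs you need $A_i\cong B_{\pi(i)}$ as labeled graphs, and your sketch (``exploit the canonical involution $\sigma$ on $A_i\cdot A_i$ \dots its quotient recovers $A_i$'') assumes exactly what needs to be shown, namely that the labeled structure $(A_i,L)$ can be read off from the abstract unlabeled graph $[[A_i^2]]$ and that the isomorphism $[[A_i^2]]\cong[[A_iB_{\pi(i)}]]$ can be chosen to respect that structure. An unlabeled graph can in general carry several $\mathbb{Z}/2$-actions (or none that are ``visible''), and nothing you have written pins down the one you want; the parenthetical about the fixed-point set ``determined structurally'' is a hope, not an argument. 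This is precisely the difficulty the paper avoids by passing to the relabeled setup and invoking the positivity lemma, which lets one compare labeled graphs directly. As it stands, your ``only if'' direction is incomplete; either supply an actual reconstruction argument for the connected pair case, or, as the paper does, reduce the whole statement to an identity of labeled graphs via $\tfrac12[[(H_1\tilde{H}_2-\tilde{H}_1H_2)^2]]=0$ and Lemma~2.3 of \cite{BRST18}.
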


\begin{proof}
Consider a $2\times 2$-principal minor in $\mathcal{M}_d$ corresponding to the rows and columns indexed by $H_1$ and $H_2$ where the labels of $H_1$ and $H_2$ (if any) are contained in some finite set $L$. The minor is thus equal to $[[H_1^2]][[H_2^2]] - [[H_1 H_2]] [[H_1 H_2]]$. We now show that this expression is a sum of squares. Let $\tilde{H}_i$ be the same graph as $H_i$ for $i\in \{1, 2\}$ but where any label $l$ becomes $l+|L|$. Note that the label sets of $H_i$ and $\tilde{H}_j$ do not intersect for $i,j\in \{1,2\}$, that $[[H_i^2]]=[[\tilde{H}_i^2]]$ for $i\in \{1,2\}$ and $[[H_1H_2]]=[[\tilde{H}_1\tilde{H}_2]]$. Thus the minor is equal to

\begin{align*}
\frac{1}{2} [[(H_1\tilde{H}_2 - \tilde{H}_1H_2)^2]] & = \frac{1}{2}[[ H_1^2\tilde{H}_2^2 - 2 H_1H_2\tilde{H}_1\tilde{H}_2+\tilde{H}_1^2H_2^2]]\\
& = \frac{1}{2}\left([[H_1^2]][[\tilde{H}_2^2]] - 2[[H_1 H_2]][[\tilde{H}_1 \tilde{H}_2]] + [[\tilde{H}_1^2]] [[H_2^2]] \right) \\
& = \frac{1}{2} \left([[H_1^2]][[H_2^2]] - 2[[H_1H_2]][[H_1H_2]] + [[H_1^2]][[H_2^2]] \right)
\end{align*}
as desired. Indeed, to go from the first to the second line, one simply needs to note that the symmetrization of products of graphs that have no labels in common is equal to the product of the symmetrization of those graphs.

Thus, if the minor is identically zero, $[[(H_1\tilde{H}_2 - \tilde{H}_1H_2)^2]] =0$. From Lemma 2.3 of \cite{BRST18}, the only way this can be so is if $H_1 \tilde{H}_2=\tilde{H}_1H_2$. Let $H_1=H_1^l H_1^u$ and $H_2=H_2^l H_2^u$ where $H_i^l$ is the graph $H_i$ restricted to components that contain at least one label, and $H_i^u$ is the graph $H_i$ restricted to components that are unlabeled. Then $\tilde{H}_i=\tilde{H}_i^l H_i^u$. Thus, $H_1\tilde{H}_2=\tilde{H}_1H_2$ is equivalent to $H_1^lH_1^u \tilde{H}_2^lH_2^u=\tilde{H}_1^lH_1^uH_2^lH_2^u$ which is equivalent to $H_1^l \tilde{H}_2^l=\tilde{H}_1^lH_2^l$.

Suppose $H_1$ and $H_2$ are two graphs where the labeled components are isomorphic, i.e., $H_1^l=H_2^l$. Note that this includes the case when both $H_1$ and $H_2$ are fully unlabeled. Observe that $\tilde{H}_1^l=\tilde{H}_2^l$. Thus, $H_1^l \tilde{H}_2^l=\tilde{H}_1^lH_2^l$ and the corresponding $2\times 2$-minor is identically zero.

Suppose now that the labeled components of $H_1$ and $H_2$ are not isomorphic, i.e., $H_1^l\neq H_2^l$ and $\tilde{H}_1^l \neq \tilde{H}_2^l$. (Note that this implies that at least one of those two graphs contain a label, otherwise, their labeled parts would be the same.) Moreover, note that $\tilde{H}_1^l \neq H_2^l$ and $H_1^l \neq \tilde{H}_2^l$ as the labels come from non-intersecting label sets. Thus it is impossible that $H_1^l\tilde{H}_2^l=\tilde{H}_1^lH_2^l$, and the corresponding $2\times 2$-minor cannot be identically zero.

\end{proof}

To avoid $2 \times 2$ principal minors that are identically $0$ in $\mathcal{M}_d$,
we restrict $\mathcal{B}_d$ to $\tilde{\mathcal{B}}_d$, the subset  containing the empty graph $1$, and
all partially labeled graphs without unlabeled connected components and isolated vertices.
Every graph in $\tilde{\mathcal{B}}_d$ still has at most $d$ edges.
Furthermore, let $\tilde{\mathcal{M}}_d$ be the moment matrix for $\tilde{\mathcal{B}}_d$.
Then the next corollary follows from the previous lemma.

\begin{corollary}
No $2\times 2$-principal minor in the symmetric matrix $\tilde{\mathcal{M}}_d$ is identically zero.
\end{corollary}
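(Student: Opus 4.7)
The plan is to deduce the corollary as an immediate consequence of the preceding lemma, which characterizes exactly when a $2 \times 2$ principal minor of $\mathcal{M}_d$ vanishes identically. So my approach is to unpack what that characterization says once we restrict the indexing set from $\mathcal{B}_d$ to $\tilde{\mathcal{B}}_d$.

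First, I would fix a $2 \times 2$ principal submatrix of $\tilde{\mathcal{M}}_d$; by definition of a principal submatrix, it is indexed by two distinct elements $H_1, H_2 \in \tilde{\mathcal{B}}_d$ (distinct as labeled graphs, since $\tilde{\mathcal{B}}_d$ is a set of isomorphism classes of labeled graphs). By the previous lemma, the corresponding $2 \times 2$ principal minor is identically zero if and only if the labeled components of $H_1$ and the labeled components of $H_2$ agree as multisets of labeled graphs, i.e., $H_1^l \cong H_2^l$ in the notation of that lemma's proof.

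Next I would use the defining property of $\tilde{\mathcal{B}}_d$: its graphs have no unlabeled connected components (and no isolated vertices). Consequently, for every $H \in \tilde{\mathcal{B}}_d$ the unlabeled part $H^u$ is trivial and $H = H^l$. In particular, writing $H_i = H_i^l H_i^u$ as in the proof of the lemma, we have $H_i^u = 1$ for $i = 1, 2$, so $H_i = H_i^l$ as labeled graphs. Hence $H_1^l \cong H_2^l$ would force $H_1 \cong H_2$ as labeled graphs, contradicting the fact that $H_1$ and $H_2$ are distinct indices of $\tilde{\mathcal{B}}_d$. The one case to double-check is when one of the $H_i$ is the empty graph $1$: then its (multiset of) labeled components is empty, while the other $H_i$ has at least one labeled component, so again $H_1^l \not\cong H_2^l$. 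This contradiction shows the minor cannot be identically zero.

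The only potential obstacle is making sure the notion of ``isomorphic labeled components'' used in the preceding lemma really does reduce to isomorphism of the whole graph once unlabeled components are excluded; that reduction is immediate from the decomposition $H = H^l H^u$ and the fact that $H^u = 1$ for $H \in \tilde{\mathcal{B}}_d$, so no further technical work is needed. The overall argument is therefore a one-line application of the lemma after noting the definition of $\tilde{\mathcal{B}}_d$.
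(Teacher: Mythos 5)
Your proof is correct and is exactly the argument the paper intends: the paper simply asserts the corollary ``follows from the previous lemma,'' and your unpacking (every $H \in \tilde{\mathcal{B}}_d$ has $H^u = 1$, so $H = H^l$, hence isomorphic labeled components forces $H_1 \cong H_2$, with the empty graph handled separately) is the natural way to make that deduction explicit. No gaps.
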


We now replace $\mathcal{M}_d$ with $\tilde{\mathcal{M}}_d$ in Lemma~\ref{lem:sos-profile is SMd}.

\begin{lemma} The $d$-sos-profile $\mathcal{S}_d$ coincides with $S_{\tilde{\mathcal{M}}_d}=
\{ {\bf v}  \in \RR^s_{ \geq 0} \,:\, \tilde{\mathcal{M}}_d({\bf v}) \succeq 0 \}$.
\end{lemma}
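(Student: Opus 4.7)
The plan is to build on Lemma~\ref{lem:sos-profile is SMd}, which already establishes $\mathcal{S}_d = S_{\mathcal{M}_d}$, and reduce the claim to the matrix-level identity $S_{\mathcal{M}_d} = S_{\tilde{\mathcal{M}}_d}$, i.e., $\mathcal{M}_d(\mathbf{v}) \succeq 0$ iff $\tilde{\mathcal{M}}_d(\mathbf{v}) \succeq 0$ for every $\mathbf{v} \in \mathbb{R}^s_{\geq 0}$. Since $\tilde{\mathcal{B}}_d \subseteq \mathcal{B}_d$, the matrix $\tilde{\mathcal{M}}_d(\mathbf{v})$ is a principal submatrix of $\mathcal{M}_d(\mathbf{v})$, so the ``only if'' direction is immediate. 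All the content lies in the ``if'' direction.

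For this, I would exploit the canonical decomposition of each $H \in \mathcal{B}_d$ as a gluing-algebra product $H = H^{l} \cdot H^{u}$, where $H^{l}$ is the union of connected components of $H$ carrying at least one label (so $H^l \in \tilde{\mathcal{B}}_d$, with $H^l = 1$ when $H$ is fully unlabeled) and $H^{u}$ is the union of the fully unlabeled components. Because the gluing operation only merges matching labels, for any $A, B \in \mathcal{B}_d$ we have the equality of unlabeled graphs $[[AB]] = [[A^l B^l]] \sqcup A^u \sqcup B^u$. Homomorphism-density multiplicativity across disjoint unions then yields
\begin{equation*}
\bigl(\mathcal{M}_d(\mathbf{v})\bigr)_{A,B} \;=\; v_{A^u} \cdot v_{B^u} \cdot \bigl(\tilde{\mathcal{M}}_d(\mathbf{v})\bigr)_{A^l, B^l},
\end{equation*}
where $v_{H^u}$ denotes the product of the values $v_C$ over the connected components $C$ of $H^u$, with the convention $v_{1} = 1$ when $H^u$ is the empty graph.

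Encoding this entrywise identity as a single matrix factorization, let $D$ be the diagonal matrix indexed by $\mathcal{B}_d$ with $D_{HH} = v_{H^u}$, and let $\pi \in \{0,1\}^{\tilde{\mathcal{B}}_d \times \mathcal{B}_d}$ be defined by $\pi_{L,H} = 1$ exactly when $L = H^l$. Then the identity above rewrites as
\begin{equation*}
\mathcal{M}_d(\mathbf{v}) \;=\; D \, \pi^\top \, \tilde{\mathcal{M}}_d(\mathbf{v}) \, \pi \, D.
\end{equation*}
For $\mathbf{v} \in \mathbb{R}^s_{\geq 0}$, the diagonal matrix $D$ is real and nonnegative, so the above is a genuine congruence. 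Therefore $\tilde{\mathcal{M}}_d(\mathbf{v}) \succeq 0$ forces $\mathcal{M}_d(\mathbf{v}) \succeq 0$, closing the equivalence and giving $\mathcal{S}_d = S_{\mathcal{M}_d} = S_{\tilde{\mathcal{M}}_d}$.

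The only thing that requires care, rather than being genuinely hard, is the bookkeeping around the empty graph and the convention $v_{1} = 1$, plus verifying that the decomposition $H = H^{l} H^{u}$ always places $H^{l}$ in $\tilde{\mathcal{B}}_d$; this is tautological once one recalls that $\tilde{\mathcal{B}}_d$ consists precisely of partially labeled graphs without unlabeled connected components together with the empty graph. No step is expected to be delicate beyond this.
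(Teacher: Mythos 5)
Your proof is correct and rests on the same key idea as the paper's: decompose each $H \in \mathcal{B}_d$ as $H = H^l H^u$ into its labeled-component part $H^l \in \tilde{\mathcal{B}}_d$ and its fully unlabeled part $H^u$, then observe that the unlabeled part factors out of the corresponding entries of $\mathcal{M}_d(\mathbf{v})$. The difference is in packaging: the paper reasons term-by-term in the Laplace expansion of each principal minor of $\mathcal{M}_d(\mathbf{v})$, pulling out the common factor $F^u(\mathbf{v})$ from the row indexed by $F$ and arguing that what remains is (up to a nonnegative factor) a principal minor of $\tilde{\mathcal{M}}_d(\mathbf{v})$ --- an argument that needs a little extra care because the reduced submatrix can have repeated rows (in which case its determinant is just zero). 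Your version sidesteps that bookkeeping by encoding the entrywise identity $(\mathcal{M}_d(\mathbf{v}))_{A,B} = v_{A^u} v_{B^u} (\tilde{\mathcal{M}}_d(\mathbf{v}))_{A^l,B^l}$ as a single congruence $\mathcal{M}_d(\mathbf{v}) = (\pi D)^\top \tilde{\mathcal{M}}_d(\mathbf{v}) (\pi D)$, from which positive semidefiniteness transfers immediately. That is cleaner, avoids the (true but unstated in the paper) fact that a symmetric matrix is PSD iff all its principal minors are nonnegative, and makes the nonnegativity of $\mathbf{v}$ visibly irrelevant for this direction (any real diagonal $D$ would do). One small thing worth saying explicitly for completeness: each connected component of $H^u$ is an unlabeled connected graph with at most $d$ edges, so it does lie in $\mathcal{V}_d$ and $v_{H^u}$ is genuinely a product of coordinates of $\mathbf{v}$; you gesture at this with ``$v_{H^u}$ denotes the product of the values $v_C$'' but the membership in $\mathcal{V}_d$ deserves a sentence.
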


\begin{proof}
By Lemma~\ref{lem:sos-profile is SMd} it suffices to argue that for ${\bf v} \geq 0$,
$\tilde{\mathcal{M}}_d({\bf v}) \succeq 0$ if and only if $\mathcal{M}_d({\bf v}) \succeq 0$.
Since $\tilde{\mathcal{M}}_d$ is a principal submatrix of $\mathcal{M}_d$,
$\mathcal{M}_d({\bf v}) \succeq 0$ implies $\tilde{\mathcal{M}}_d({\bf v}) \succeq 0$.

Consider a
graph $F = F^u F^l$ where $F^u$ is unlabeled and every component in $F^l$ has at least one label.
Then the row of  $\mathcal{M}_d$ indexed by $F$ is $(F^u [[F^l H]] \,:\, H \in \mathcal{B}_d)$.
Therefore the corresponding row of $\mathcal{M}_d({\bf v})$ is $(F^u({\bf v})[[F^l H]]({\bf v}) \,:\, H \in \mathcal{B}_d)$.
This means that every term in a principal minor of $\mathcal{M}_d({\bf v})$ (expanded in terms of permutations) involving the row indexed by
$F$ contains the common factor
$F^u({\bf v})$ which is nonnegative. Factoring this out, we obtain a principal minor of $\tilde{\mathcal{M}}_d({\bf v})$. Thus
if $\tilde{\mathcal{M}}_d({\bf v}) \succeq 0$ then $\mathcal{M}_d({\bf v}) \succeq 0$.
\end{proof}

The second requirement in Theorem~\ref{thm:tropSM=tropSM2by2} is that $S_{\tilde{\mathcal{M}}_d^{2 \times 2}}$ has an interior.
By Lemma~\ref{lem:sos-profile is SMd}, $\mathcal{G}_{\mathcal{V}_d} \subseteq
\mathcal{S}_d \subseteq  S_{\tilde{\mathcal{M}}_d^{2 \times 2}}$ and since $\mathcal{G}_{\mathcal{V}_d}$ has an interior \cite{MR538044}
so do $\mathcal{S}_d$ and $S_{\tilde{\mathcal{M}}_d^{2 \times 2}}$. We can now apply Theorem~\ref{thm:tropSM=tropSM2by2}.

\begin{theorem}\label{thm:sos-2by2} The $d$-sos-profile $\mathcal{S}_d$ is a basic semialgebraic set in $\RR^s_{\geq 0}$ containing the graph profile
$\mathcal{G}_{\mathcal{V}_d}$, and its tropicalization, $\textup{trop}(\mathcal{S}_d)$, is the rational polyhedral cone
$\log(S_{\tilde{\mathcal{M}}_d}^{2 \times 2})$.
\end{theorem}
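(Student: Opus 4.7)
The plan is to assemble three ingredients that have already been established in the section. The statement decomposes into (i) $\mathcal{S}_d$ is basic semialgebraic, (ii) $\mathcal{S}_d \supseteq \mathcal{G}_{\mathcal{V}_d}$, and (iii) $\operatorname{trop}(\mathcal{S}_d) = \log(S_{\tilde{\mathcal{M}}_d}^{2\times 2})$, with the latter being a rational polyhedral cone.

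For (i), I would invoke the refined identification $\mathcal{S}_d = S_{\tilde{\mathcal{M}}_d} = \{\mathbf{v} \in \RR^s_{\geq 0} \,:\, \tilde{\mathcal{M}}_d(\mathbf{v}) \succeq 0\}$, established just before the theorem. Positive semidefiniteness of $\tilde{\mathcal{M}}_d(\mathbf{v})$ is equivalent to the nonnegativity of its finitely many principal minors, each of which is a polynomial in the coordinates of $\mathbf{v}$. Together with the defining inequalities $v_i \geq 0$, this exhibits $\mathcal{S}_d$ as a basic closed semialgebraic set in $\RR^s_{\geq 0}$.

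For (ii), the containment was already noted in the paragraph preceding Example~\ref{ex:Sd}: if $\mathbf{v} = (t(C;G) \,:\, C \in \mathcal{V}_d)$ for some graph $G$, then for any $d$-sos graph combination $a = \sum_j [[a_j^2]]$ with $a_j \in \operatorname{span}(\mathcal{B}_d)$, we have $a(G) = \sum_j t(a_j;G)^2 \geq 0$; equivalently, $\tilde{\mathcal{M}}_d(\mathbf{v}) \succeq 0$ because it is the moment matrix of the appropriate rank-one evaluation. Taking closures, $\mathcal{G}_{\mathcal{V}_d} \subseteq \mathcal{S}_d$.

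For (iii), the plan is to apply Theorem~\ref{thm:tropSM=tropSM2by2} with $M = \tilde{\mathcal{M}}_d$. The two hypotheses needed are precisely what the preceding lemmas provide: no $2 \times 2$ principal minor of $\tilde{\mathcal{M}}_d$ is identically zero (the corollary after restricting $\mathcal{B}_d$ to $\tilde{\mathcal{B}}_d$), and $S_{\tilde{\mathcal{M}}_d}^{2\times 2}$ has nonempty interior in $\RR^s_{>0}$ because it contains $\mathcal{S}_d \supseteq \mathcal{G}_{\mathcal{V}_d}$, and $\mathcal{G}_{\mathcal{V}_d}$ is full-dimensional by \cite{MR538044}. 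Theorem~\ref{thm:tropSM=tropSM2by2} then yields $\operatorname{trop}(\mathcal{S}_d) = \operatorname{trop}(S_{\tilde{\mathcal{M}}_d}^{2\times 2})$, which in turn equals $\log(S_{\tilde{\mathcal{M}}_d}^{2\times 2})$ by the lemma on $\operatorname{trop}(S_M^{2\times 2})$. Rationality and polyhedrality are immediate: each $2 \times 2$ principal minor of $\tilde{\mathcal{M}}_d$ has the form $\mathbf{x}^{\mathbf{a}}\mathbf{x}^{\mathbf{b}} - \mathbf{x}^{2\mathbf{c}}$ with integer exponent vectors, and nonnegativity on $\RR^s_{>0}$ translates under the log map into the linear inequality $(\mathbf{a}+\mathbf{b}-2\mathbf{c})^\top \mathbf{y} \geq 0$, giving finitely many integer linear inequalities.

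There is no genuine obstacle here, since all the real work was done in Theorem~\ref{thm:tropSM=tropSM2by2} and in the structural lemmas about $\tilde{\mathcal{M}}_d$; the only thing to be careful about is to quote the restriction from $\mathcal{M}_d$ to $\tilde{\mathcal{M}}_d$ correctly, so that the hypothesis of Theorem~\ref{thm:tropSM=tropSM2by2} (no vanishing $2\times 2$ principal minor) is honestly verified and the basic semialgebraic description refers to $\tilde{\mathcal{M}}_d$ rather than $\mathcal{M}_d$.
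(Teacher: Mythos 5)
Your proposal is correct and matches the paper's approach: the theorem is stated in the paper as a summary of the preceding lemmas (the identification $\mathcal{S}_d = S_{\tilde{\mathcal{M}}_d}$, the corollary that no $2\times 2$ principal minor of $\tilde{\mathcal{M}}_d$ vanishes identically, the full-dimensionality of $\mathcal{G}_{\mathcal{V}_d}$, and Theorem~\ref{thm:tropSM=tropSM2by2}), and you assemble exactly those ingredients. One small notational slip: in part (ii) the expression $t(a_j;G)^2$ is not meaningful since $a_j$ lies in the span of partially labeled graphs; the correct observation is that $t([[a_j^2]];G) \geq 0$ as an averaged square, or (as you also say, and as the paper does) that $\tilde{\mathcal{M}}_d(\mathbf{v})$ is a positive semidefinite connection matrix when $\mathbf{v}$ comes from a graph.
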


\begin{corollary}
The $(\mathcal{U},d)$-sos-profile $\mathcal{S}_{\mathcal{U},d}$ is a semialgebraic set containing the graph profile $\mathcal{G}_\mathcal{U}$ for all $d$. Its tropicalization, $\textup{trop}(\mathcal{S}_{\mathcal{U},d})$, is the projection of the rational polyhedral cone
$\log(S_{\tilde{\mathcal{M}}_d}^{2 \times 2})$ onto the coordinates indexed by $\mathcal{U}$. In particular, $\textup{trop}(\mathcal{S}_{\mathcal{U},d})$ is also a rational polyhedral cone.
\end{corollary}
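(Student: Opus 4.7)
The plan is to derive this corollary directly from Theorem~\ref{thm:sos-2by2} by viewing $\mathcal{S}_{\mathcal{U},d}$ as a coordinate projection of $\mathcal{S}_d$ and then transporting the properties (semialgebraicity, containment of $\mathcal{G}_\mathcal{U}$, rational polyhedrality of the tropicalization) through this projection. Throughout, let $\pi: \mathbb{R}^{\mathcal{V}_d} \to \mathbb{R}^{\mathcal{U}}$ denote the coordinate projection, available whenever $d$ is large enough that $\mathcal{U} \subseteq \mathcal{V}_d$, as noted just before Theorem~\ref{thm:sos-2by2}.

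First I would handle the semialgebraicity and the containment. By definition $\mathcal{S}_{\mathcal{U},d} = \pi(\mathcal{S}_d)$, and $\mathcal{S}_d$ is semialgebraic by Theorem~\ref{thm:sos-2by2}, so Tarski--Seidenberg gives that $\mathcal{S}_{\mathcal{U},d}$ is semialgebraic. For the containment, the discussion preceding Theorem~\ref{thm:sos-2by2} observes that $\mathcal{G}_\mathcal{U} = \pi(\mathcal{G}_{\mathcal{V}_d})$ and $\mathcal{G}_{\mathcal{V}_d} \subseteq \mathcal{S}_d$, so $\mathcal{G}_\mathcal{U} = \pi(\mathcal{G}_{\mathcal{V}_d}) \subseteq \pi(\mathcal{S}_d) = \mathcal{S}_{\mathcal{U},d}$.

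The substantive step is identifying the tropicalization. I would first argue that $\mathcal{S}_{\mathcal{U},d}$ has the Hadamard property and contains $\mathbf{1}$: both properties descend from $\mathcal{S}_d$ under the coordinate projection $\pi$, because $\pi$ is a ring homomorphism for the Hadamard product and $\pi(\mathbf{1}) = \mathbf{1}$. Applying Lemma~\ref{lem:tropS}, this gives
\[
\operatorname{trop}(\mathcal{S}_{\mathcal{U},d}) \;=\; \operatorname{cl}\bigl(\operatorname{cone}(\log \mathcal{S}_{\mathcal{U},d})\bigr) \;=\; \operatorname{cl}\bigl(\operatorname{cone}(\pi(\log \mathcal{S}_d))\bigr),
\]
where the last equality uses that the coordinate projection commutes with the coordinatewise logarithm. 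Since $\pi$ is linear, it commutes with taking conical hulls, so the right-hand side equals $\operatorname{cl}(\pi(\operatorname{cone}(\log \mathcal{S}_d)))$.

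The final step, which I expect to be the main point requiring care, is swapping this outer closure with $\pi$. In general $\pi$ need not commute with closure, but here it does for a polyhedral reason: by Theorem~\ref{thm:sos-2by2}, $\operatorname{trop}(\mathcal{S}_d) = \operatorname{cl}(\operatorname{cone}(\log \mathcal{S}_d))$ is a rational polyhedral cone, and the image of a rational polyhedral cone under a rational linear map (here a coordinate projection) is again a rational polyhedral cone by Fourier--Motzkin elimination; in particular it is closed. Hence
\[
\operatorname{cl}\bigl(\pi(\operatorname{cone}(\log \mathcal{S}_d))\bigr) \;=\; \pi\bigl(\operatorname{cl}(\operatorname{cone}(\log \mathcal{S}_d))\bigr) \;=\; \pi(\operatorname{trop}(\mathcal{S}_d)).
\]
Combining the displays yields $\operatorname{trop}(\mathcal{S}_{\mathcal{U},d}) = \pi(\operatorname{trop}(\mathcal{S}_d))$, which by Theorem~\ref{thm:sos-2by2} is the projection of the rational polyhedral cone $\log(S_{\tilde{\mathcal{M}}_d}^{2\times 2})$ onto the coordinates indexed by $\mathcal{U}$, and is itself a rational polyhedral cone.
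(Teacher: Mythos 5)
Your overall route is the natural one for this corollary (the paper states it without proof, so there is no written argument to compare against), and the first two parts are clean: semialgebraicity of a coordinate projection of a semialgebraic set follows from Tarski--Seidenberg, and the containment $\mathcal{G}_\mathcal{U}\subseteq\mathcal{S}_{\mathcal{U},d}$ follows by projecting the inclusion $\mathcal{G}_{\mathcal{V}_d}\subseteq\mathcal{S}_d$. Your treatment of the closure swap via Fourier--Motzkin (projections of rational polyhedral cones are closed rational polyhedral cones) is also correct and is the right tool.

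The one step that I do not think is justified as written is the middle equality
\[
\operatorname{cl}\bigl(\operatorname{cone}(\log\mathcal{S}_{\mathcal{U},d})\bigr)=\operatorname{cl}\bigl(\operatorname{cone}(\pi(\log\mathcal{S}_d))\bigr),
\]
which you attribute to ``the coordinate projection commutes with the coordinatewise logarithm.'' That identity holds pointwise for strictly positive vectors, and it does give the inclusion $\pi(\log\mathcal{S}_d)\subseteq\log(\pi(\mathcal{S}_d))=\log\mathcal{S}_{\mathcal{U},d}$ (hence $\pi(\operatorname{trop}(\mathcal{S}_d))\subseteq\operatorname{trop}(\mathcal{S}_{\mathcal{U},d})$). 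But the reverse inclusion of sets $\log\mathcal{S}_{\mathcal{U},d}\subseteq\pi(\log\mathcal{S}_d)$ (or even of the cones they generate) is not automatic: a point $\mathbf{u}\in\mathcal{S}_{\mathcal{U},d}\cap\RR^{|\mathcal{U}|}_{>0}$ need only be of the form $\pi(\mathbf{v})$ for some $\mathbf{v}\in\mathcal{S}_d$ whose coordinates on $\mathcal{V}_d\setminus\mathcal{U}$ may vanish, so $\log\mathbf{v}$ is undefined and $\mathbf{u}$ contributes nothing to $\pi(\log\mathcal{S}_d)$. This is a real phenomenon for general Hadamard-closed sets containing $\mathbf{1}$: e.g.\ for $\mathcal{S}=\{(x,0):x\in[0,1]\}\cup\{(1,1)\}\subset\RR^2_{\geq 0}$ (Hadamard-closed, contains $\mathbf{1}$), one has $\operatorname{trop}(\mathcal{S})=\{(0,0)\}$ while $\operatorname{trop}(\pi(\mathcal{S}))=\RR_{\leq 0}$, so $\operatorname{trop}\circ\pi\neq\pi\circ\operatorname{trop}$. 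To make your argument watertight you need one more input specific to $\mathcal{S}_d$ --- for instance that $\mathcal{S}_d\cap\RR^s_{>0}$ is dense in the set of preimages you care about, or a direct argument that any binomial inequality supported on $\mathcal{U}$ that holds on $\mathcal{S}_d\cap\RR^s_{>0}$ already holds on all of $\mathcal{S}_{\mathcal{U},d}\cap\RR^{|\mathcal{U}|}_{>0}$ --- and that justification is currently missing.
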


\begin{example}
Let $d = 1$. Then $\mathcal{V}_1 = \{\uvedge, \uHtwo \}$ is the set of all unlabeled connected graphs that can be obtained as a product of two partially labeled graphs with at most one edge. The $1$-sos-profile
$\mathcal{S}_1$,  is the set of all ${\bf v } \in \RR^2_{\geq 0}$ that evaluate nonnegatively on all $1$-sos polynomials in $\RR[\uvedge, \uHtwo]$.
The graph profile $\mathcal{G}_{\mathcal{V}_1}$ is shown in Figure~\ref{fig:edge-path-profile}. The lower bound consists of $dn$-regular graphs with $d \in [0,1]$. The upper bound on the right consists of a clique on $\alpha n$ vertices for some $\alpha\in [\frac{1}{2},1]$, and the upper bound on the left consists of the complement of such graphs \cite{MR505076}.

\begin{figure}[ht]
  \centering
\begin{tikzpicture}[scale=0.7]
\begin{axis}[xlabel=\large{Edge density \uvedge}, ylabel=\large{Two-path density \uHtwo}, xmin=0, ymin=0, xmax=1, ymax=1, xtick={0,1}, ytick={0,1}, samples=50]
\addplot[fill=gray!50, draw=none, domain=1/2:1] {x^(3/2)} \closedcycle;
\addplot[thick, samples=50,domain=1/2:1] {x^(3/2)};
\addplot[fill=gray!50, draw=none, domain=0:1-2^(1/2)/2] ({2*x-x^2},{-x^3+x^2+x}) \closedcycle;
\addplot [thick, domain=0:1-2^(1/2)/2,samples=50]({2*x-x^2},{-x^3+x^2+x});
\addplot [dashed, domain=1-2^(1/2)/2:1,samples=50]({2*x-x^2},{-x^3+x^2+x});
\addplot[dashed, samples=50,domain=0:1/2] {x^(3/2)};
\addplot[fill=white, draw=none, domain=0:1] {x^2} \closedcycle;
\addplot[thick, samples=30,domain=0:1] {x^2};
\end{axis}
\end{tikzpicture}
\caption{\label{fig:edge-path-profile} The graph profile of $\mathcal{V}_1$}
\end{figure}
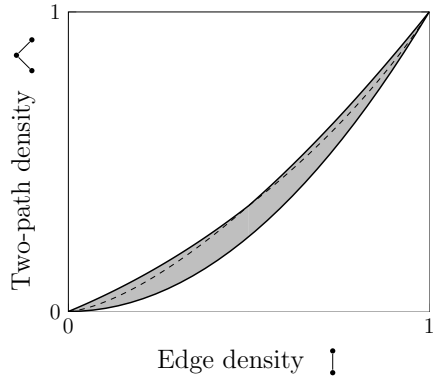

We will show that
$\textup{trop}(\mathcal{G}_{\mathcal{V}_1})=\textup{trop}(\mathcal{S}_1)$.

Consider $\tilde{\mathcal{B}}_1=\{1, \vedge{1}{}, \vedge{2}{}, \vedge{1}{2}\}$.
Then

$$\tilde{\mathcal{M}}_1=\begin{pmatrix}
1 & \uvedge & \uvedge & \uvedge\\[6pt]
\uvedge & \uHtwo & \uvedge\uvedge & \uHtwo\\[6pt]
\uvedge & \uvedge\uvedge & \uHtwo & \uHtwo\\[6pt]
\uvedge & \uHtwo & \uHtwo & \uvedge
\end{pmatrix}$$

After removing redundancies, the six $2 \times 2$ principal minors of $\tilde{\mathcal{M}}_1$
yield the following two inequalities:
$$ \uHtwo-\uvedge^2\geq 0 \,\,\,\textup{ and } \,\,\, \uvedge - \uHtwo \geq 0.$$
Therefore, $\log(S_{\tilde{\mathcal{M}}_1^{2\times 2}}) = \textup{trop}(\mathcal{S}_1)$ is the cone generated by
the rays $(-1,-1)$ and $(-1,-2)$. As we saw in the star example of Section \ref{sec:example2}, this cone coincides with $\textup{trop}(\mathcal{G}_{\mathcal{V}_1})$ when
$\mathcal{V}_1=\{\uvedge, \uHtwo\}$.  \end{example}
\bigskip

\subsection{Sos-testable graph combinations}

We now introduce the notion of an sos-testable graph combination which plays an important role in the next section.

\begin{definition}
A graph combination $a$ is {\em sos-testable} if $a \geq 0$ on a $d$-sos-profile $\mathcal{S}_d$ for some $d$.
\end{definition}

\begin{theorem}\label{thm:mult}
Let $a,b,c$ be graph combinations such that $b\neq 0$ and $c$ are sos-testable. If $ab=c$ then $a$ is sos-testable.
\end{theorem}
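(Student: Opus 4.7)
The plan is to find a single $d$ for which both $b$ and $c$ are nonnegative on $\mathcal{S}_d$, and then deduce $a \geq 0$ on $\mathcal{S}_d$ by a continuity/density argument. Since every $d$-sos graph combination is also $d'$-sos for $d' \geq d$, one has $\mathcal{S}_{d'} \subseteq \mathcal{S}_d$ whenever $d' \geq d$. Picking $d_b, d_c$ with $b \geq 0$ on $\mathcal{S}_{d_b}$ and $c \geq 0$ on $\mathcal{S}_{d_c}$ and setting $d := \max(d_b, d_c)$, both $b$ and $c = ab$ are nonnegative on $\mathcal{S}_d$, and the goal becomes showing $a \geq 0$ on $\mathcal{S}_d$, which would witness $a$ being sos-testable.

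On the set $U := \{v \in \mathcal{S}_d : b(v) > 0\}$, the identity $ab = c$ together with $c(v) \geq 0$ and $b(v) > 0$ gives $a(v) = c(v)/b(v) \geq 0$. Since $a$ is a polynomial in the density coordinates, hence continuous on $\RR^s$, the inequality $a \geq 0$ extends to the closure of $U$ inside $\mathcal{S}_d$. It therefore suffices to show that $U$ is dense in $\mathcal{S}_d$, for which I would combine two observations: (i) $b \neq 0$ as a graph combination is a nonzero polynomial in the density coordinates indexing $\mathcal{V}_d$, so $Z := \{v \in \RR^s : b(v) = 0\}$ is a proper algebraic subvariety of $\RR^s$ and thus nowhere dense; and (ii) $\mathcal{S}_d$ is a closed semialgebraic set containing the full-dimensional graph profile $\mathcal{G}_{\mathcal{V}_d}$, which should yield the regularity property $\mathcal{S}_d = \overline{\mathrm{int}(\mathcal{S}_d)}$. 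Granting (ii), any neighborhood $N$ of $v \in \mathcal{S}_d$ meets $\mathrm{int}(\mathcal{S}_d)$ in a nonempty open set, which cannot lie entirely in the nowhere-dense $Z$ by (i), producing a point of $U$ arbitrarily close to $v$. Then $a(v) < 0$ would contradict continuity of $a$, so $a \geq 0$ everywhere on $\mathcal{S}_d$.

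The main obstacle I expect is verifying the regularity claim $\mathcal{S}_d = \overline{\mathrm{int}(\mathcal{S}_d)}$, together with the analogous full-dimensional approximation property for $\mathcal{G}_\mathcal{U}$ flagged in the paragraph just before Theorem \ref{thm:tropGU}. The route I would take exploits the Hadamard property of $\mathcal{S}_d$ (established for the $S_M$-type sets in this section): given $v \in \mathcal{S}_d$, form $v_n := v \cdot w_n$ where $w_n$ is the density vector of the complete graph $K_n$ on $n$ vertices, so that $w_n \to \mathbf{1}$ and $v_n \to v$, with each $v_n \in \mathcal{S}_d$ by Hadamard closure. A small perturbation of $w_n$ inside $\mathcal{G}_{\mathcal{V}_d}$ (for instance via quasi-random blow-ups of complete graphs, which furnish a full-dimensional family of density vectors near $\mathbf{1}$), combined with continuity of the principal minors of $\tilde{\mathcal{M}}_d$, should place the perturbed approximants in $\mathrm{int}(\mathcal{S}_d)$, thereby supplying the needed regularity and completing the argument.
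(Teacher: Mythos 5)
Your proposal is correct and follows essentially the same route as the paper: choose $d=\max(d_b,d_c)$, reduce to showing $\{b>0\}$ is dense in $\mathcal{S}_d$ (equivalently, that any hypothetical point where $a<0$ has a full-dimensional $\mathcal{S}_d$-ball nearby), and deduce this from the Hadamard property together with a full-dimensional ball in the profile near $\mathbf{1}$, after which $b\neq 0$ as a polynomial forces a point with $a<0$, $b>0$, hence $c=ab<0$, a contradiction. The regularity step you flag as the main obstacle is exactly what the paper fills in concretely: it starts from a full-dimensional ball $B(\mathbf{z},\epsilon)\subset\mathcal{G}_\mathcal{U}$ (full-dimensionality of graph profiles, \cite{erdHos1979strong}), pushes it into any prescribed neighborhood of $\mathbf{1}$ by adjoining a disjoint large clique, which realizes the affine map $\phi(\mathbf{y})_i=(1-\delta)^{|V(F_i)|}+\delta^{|V(F_i)|}y_i$ with nonsingular diagonal Jacobian so the image remains full-dimensional, and then transports the resulting ball to a neighborhood of the bad point $\tilde{\mathbf{x}}$ via the Hadamard property.
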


\begin{proof}
Let $\mathcal{U}=\{F_1,\ldots, F_k\}$ be the set of connected components of graphs in $a$, $b$ and $c$. There exist $d_1,d_2 \in \mathbb{N}$ such that $b$ is nonnegative on $\mathcal{S}_{\mathcal{U},d_1}$ and $c$ is nonnegative on $\mathcal{S}_{\mathcal{U},d_2}$. Let $d=\max \{d_1,d_2\}$.
We will prove that $a \geq 0$ on $\mathcal{S}_{\mathcal{U},d}$ making it sos-testable.

We first argue that every neighborhood of $\mathbf{1}$ has a ball contained in $\G_{\U}$, i.e., for every $r>0$ there exists a $\tilde{r}>0$,  and
$\mathbf{w} \in \RR^{|\U|}$ such that $B(\mathbf{w},\tilde{r})\subseteq \G_{\U}\cap B(\mathbf{1},{r})$. Here $B(\mathbf{w}, \tilde{r})$ denotes the closed ball of radius
$\tilde{r}$ around $\mathbf{w}$. From Theorem 1~\cite{erdHos1979strong}, we have that there exists ${\mathbf{z}}\in \RR^{|\U|}$ and $\epsilon>0$ such that $B({\mathbf{z}},\epsilon)\subseteq \G_{\U}$. Thus, for every $\mathbf{y} \in B({\mathbf{z}},\epsilon)$ there exists a sequence of graphs $G_1,\ldots, G_n,\ldots$, where $|V(G_n)| = n$,  such that $\lim_{n\rightarrow \infty} (t(F_1;G_n), \ldots, t(F_k;G_n))=\mathbf{y}$.
Now fix $r>0$. Then consider the graph sequence $H_n$ which consists of $G_{\delta n}$ along with a disjoint copy of $K_{(1-\delta)n}$ for each $n$, where $\delta >0$ will be fixed later and we ignore integrality issues with $\delta n$. For an $F_i$, we have $t(F_i,H_n)=(1-\delta)^{|V(F_i)|}+\delta^{|V(F_i)|} t(F_i,G_{\delta n})$. Thus $\lim_{n\rightarrow \infty} t(F_i,H_n)=(1-\delta)^{|V(F_i)|}+\delta^{|V(F_i)|} y_i$. Let
$\phi:\RR^{|\U|}\rightarrow \RR^{|\U|}$ denote the map where
$$\phi(\mathbf{y})=\left( (1-\delta)^{|V(F_1)|}+\delta^{|V(F_1)|} y_1, \ldots, (1-\delta)^{|V(F_k)|}+\delta^{|V(F_k)|} y_k\right).$$
By construction, $\phi(\mathbf{y}) \in \G_\U$.
If we set $\delta =\frac{r}{\max_i{|V(F_i)|}}$ then $(1-\delta)^{|V(F_i)|}\geq 1-\delta {|V(F_i)|} \geq 1-r$. Thus $\phi(\mathbf{y})\geq 1-r$ and $\phi(\mathbf{y})\in B(\mathbf{1},r)$. Therefore, we have $\phi(B({\mathbf{z}},\epsilon))\subseteq \G_\U \cap B(\mathbf{1},r)$.
Moreover, the Jacobian of $\phi$ is just the diagonal matrix with $i^{th}$ diagonal entry  $\delta^{|V(F_i)|}$ and thus has a non-zero determinant. Thus $\phi(B({\mathbf{z}},\epsilon))$ is full-dimensional and contains a ball $B(\mathbf{w},\tilde{r})$ for some $\mathbf{w}
\in \RR^{\U},\tilde{r}>0$. Therefore, we have $B(\mathbf{w},\tilde{r})\subseteq \G_{\U}\cap B(\mathbf{1},{r})$ as claimed.

Now suppose there exists ${\bf x} \in \mS_{\mathcal{U},d}$ such that $a(\mathbf{x})<0$.
Since $\mS_{\mathcal{U},d}$ has the Hadamard property and $\G_{\mathcal{U}}\subset \mS_{\mathcal{U},d}$, we see that any neighborhood of $\mathbf{x}$ in $\mS_{\mathcal{U},d}$ also contains a closed ball by applying the Hadamard property to $\mathbf{x}$ and the closed ball in the neighborhood of $\mathbf{1}$. 
Since $a(\mathbf{x})<0$ and $a$ is polynomial function in the coordinates indexed by $\mathcal{U}$, and therefore continuous, it follows that there exists $\tilde{\mathbf{x}}\in \mS_{\mathcal{U},d}$ such that $a(\tilde{\mathbf{x}})<0$ and a closed ball ${B}$ around $\tilde{\mathbf{x}}$ is contained in $\mS_{\mathcal{U},d}$. Since $b$ and $c$ are sos-testable, we have $b(\mathbf{u}),c(\mathbf{u})\geq 0$ for all $\mathbf{u}\in B$. Moreover, since $b\neq 0$, there exists $\hat{\mathbf{x}} \in{B}$ such that $a(\hat{\mathbf{x}})<0$ and $b(\hat{\mathbf{x}})>0$. Then we have $ab(\hat{\mathbf{x}})<0$ while $c(\hat{\mathbf{x}})\geq0$ which is a contradiction.
\end{proof}

\begin{corollary} \label{cor:not sostestable implies not rational sos}
If a graph combination $a$ is not sos-testable, it is not a rational sos.
\end{corollary}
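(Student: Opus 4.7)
The plan is to derive this corollary directly from Theorem~\ref{thm:mult} via contrapositive. Suppose, for contradiction, that $a$ is not sos-testable yet $a$ is a rational sum of squares. By the definition recalled in the introduction (following the convention of~\cite{HN11}), this means there exists a nonzero sum of squares graph combination $b$ such that the product $ab=c$ is itself a sum of squares.

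The key preliminary observation is that every sum of squares is automatically sos-testable. Indeed, if $p=\sum_j [[p_j^2]]$ is a sum of squares in which each $p_j$ lies in the span of partially labeled graphs with at most $d$ edges, then $p$ is a $d$-sos graph combination in the sense of Section~\ref{sec:SosProfiles}, and hence $p\geq 0$ on the $d$-sos-profile $\mathcal{S}_d$ by the very definition of $\mathcal{S}_d$. Applied to $b$ and $c$, this shows that both factors in the factorization $ab=c$ are sos-testable.

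Now I would invoke Theorem~\ref{thm:mult} with these $a,b,c$: the hypotheses that $b\neq 0$, that $b$ is sos-testable, and that $c=ab$ is sos-testable are all in place, so the theorem forces $a$ to be sos-testable. This contradicts our assumption on $a$, and the contrapositive yields the corollary.

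There is essentially no obstacle here beyond checking the convention for ``rational sos''; the corollary is an immediate packaging of Theorem~\ref{thm:mult} with the tautology that sums of squares are sos-testable. The only point requiring a small comment is the nondegeneracy hypothesis $b\neq 0$ in Theorem~\ref{thm:mult}, which is automatic whenever the multiplier is taken of the form $1+b'$ with $b'$ a sum of squares (as in Problem~21 of~\cite{LovaszOpenProblems}), and more generally is part of any sensible definition of ``rational sos'' since the degenerate product by the zero combination carries no information.
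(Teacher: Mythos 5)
Your proof is correct and follows essentially the same route as the paper: write $a = c/b$ with $b \neq 0$ and $b,c$ sos, observe that sos implies sos-testable, and apply Theorem~\ref{thm:mult} to $ab = c$ to force $a$ to be sos-testable, a contradiction. The extra remark justifying that sums of squares are sos-testable (via the definition of $\mathcal{S}_d$) is a helpful but minor elaboration of what the paper leaves implicit.
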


\begin{proof}
For the sake of contradiction, suppose $a$ is a rational sos, i.e., $a=\frac{c}{b}$ where $b\neq 0$ and $b$ and $c$ are sos. Then we have $ab=c$. Moreover, $b$ and $c$ are sos-testable since they are sos. Thus $a$ must also be sos-testable from Theorem~\ref{thm:mult} which is a contradiction.
\end{proof}


\section{Limitations of Sums of Squares}\label{sec:limitations}

In this section we will use the $d$-sos-profile $\mathcal{S}_d$ and the $(\mathcal{U},d)$-sos-profile $\mathcal{S}_{\mathcal{U},d}$
defined in Section~\ref{sec:SosProfiles} to show that there are simple binomial graph density inequalities that 
are not sos-testable. This means that sums of squares do not recognize these inequalities. 

Following \cite{BRST18}, we call an unlabeled graph $H$, a \emph{trivial square}, if whenever $H=[[F^2]]$ then $F$ must be a fully
labeled copy of $H$. For example, $\upthree$ is a trivial square.

\begin{theorem} \label{thm:noksos}
Let $\oversl{H}$ and $\undersl{H}$ be two graphs with the same number of edges where the former is a trivial square in which
every vertex has degree $p$ or $p+1$ for some integer $p\geq 1$, and the degree of any vertex in $\undersl{H}$ is at most $p+1$. Then for any $k\geq 2|E(\undersl{H})|+1$, the inequality $\oversl{H}^k-\undersl{H}^{k+1}\geq 0$ is not sos-testable.
In particular, $\upthree^k-\uvedge^{3(k+1)}$ is not sos-testable for $k \geq 7$.
\end{theorem}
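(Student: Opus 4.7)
My plan is to use the explicit polyhedral description of $\operatorname{trop}(\mathcal{S}_d)$ from Theorem~\ref{thm:sos-2by2} to reduce non-sos-testability to the existence, for each $d$, of a weight assignment on connected graphs that simultaneously satisfies every $2\times 2$ principal minor inequality of the moment matrix $\tilde{\mathcal{M}}_d$ and violates the linear inequality obtained from $\oversl{H}^k\geq\undersl{H}^{k+1}$ after taking logs. Concretely, by Theorem~\ref{thm:sos-2by2} the cone $\operatorname{trop}(\mathcal{S}_d)=\log(S_{\tilde{\mathcal{M}}_d}^{2\times 2})$ is cut out by the inequalities
\[ y_{[[A^2]]}+y_{[[B^2]]}\geq 2\,y_{[[AB]]}, \qquad A,B\in\tilde{\mathcal{B}}_d, \]
where the coordinates $y_C$ are indexed by connected $C\in\mathcal{V}_d$ and are extended additively over disjoint unions. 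Exhibiting such a $y$ with $k\,y_{\oversl{H}}-(k+1)\,y_{\undersl{H}}<0$ yields, via Lemma~\ref{lem:tropS} and the Hadamard property of $\mathcal{S}_d$, an explicit point of $\mathcal{S}_d$ violating $\oversl{H}^k\geq\undersl{H}^{k+1}$.

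The weight assignment I would construct puts strongly negative weight on $\oversl{H}$ (and certain closely related rigid connected graphs) while keeping every component $D$ of $\undersl{H}$ at weight close to $0$, so that the quantity $-y_C$ essentially counts the number of fully labeled $\oversl{H}$-substructures that can be carved out of $C$. The trivial-square property is the crucial rigidity input: writing $A=A^l\cdot A^u$ with $A^l$ the union of components of $A$ containing at least one label, every $\oversl{H}$-component of $[[A^2]]$ must arise either by doubling an unlabeled $\oversl{H}$-component of $A^u$ or by unlabeling a fully labeled $\oversl{H}$-component of $A^l$. This tightly controls the $\oversl{H}$-count on the left-hand sides $y_{[[A^2]]}+y_{[[B^2]]}$. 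The degree hypotheses, namely that every vertex of $\oversl{H}$ has degree $p$ or $p+1$ while $\undersl{H}$ has maximum degree at most $p+1$, control the mixed gluings $A^l\cdot B^l$ that determine $y_{[[AB]]}$, ensuring that enough $\oversl{H}$-components appear on the right-hand side to offset the sum on the left. The numerical bound $k\geq 2|E(\undersl{H})|+1$ is then what forces the achievable weight gap between $\oversl{H}$ and $\undersl{H}$ to exceed the factor $(k+1)/k$ in the target functional.

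The hard part will be the combinatorial verification of the $2\times 2$ minor inequalities uniformly across all $d$. The delicate case arises when $A^l$ and $B^l$ share label sets, so that two fully labeled $\oversl{H}$-components (one from each graph) could a priori fuse in $A^l\cdot B^l$ into a graph different from $\oversl{H}$, seemingly destroying the needed $\oversl{H}$-count on the right. Ruling this out requires a case analysis exploiting the degree profile of $\oversl{H}$: any such fusion would create at some vertex a degree strictly larger than $p+1$, incompatible with the rigid $\{p,p+1\}$ degree profile, so the purported loss of $\oversl{H}$-components cannot occur. The same degree argument is what keeps $y_{\undersl{H}}$ near zero, since $\undersl{H}$'s components of maximum degree at most $p+1$ cannot themselves harbor labeled $\oversl{H}$-substructures in a way that would force $-y_{\undersl{H}}$ to be large. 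Once the verification is in hand, Corollary~\ref{cor:not sostestable implies not rational sos} upgrades the conclusion to non-existence of a rational sos expression, and the specialization $\oversl{H}=\upthree$, $\undersl{H}=\uvedge^3$, $k\geq 7$ recovers the Blakley-Roy statement advertised.
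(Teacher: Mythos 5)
Your reduction from non-sos-testability to non-validity of the corresponding linear inequality on $\operatorname{trop}(\mathcal{S}_d)$, via Theorem~\ref{thm:sos-2by2} and the convex-cone structure from Lemma~\ref{lem:tropS}, is correct and matches the paper's setup. The gap is in the separating weight vector $y$. The intuition of ``strongly negative weight on $\oversl{H}$ \ldots while keeping every component $D$ of $\undersl{H}$ at weight close to $0$'' cannot be realized inside the $2\times 2$ minor cone: for $\oversl{H}=\upthree$ and $\undersl{H}=\uvedge^3$, the pair $A=\pthree{1}{2}{3}{4}$, $B=\vedge{1}{2}$ already forces $y_{\uvedge}\geq y_{\upthree}$, and as $d$ grows the minor constraints from longer paths, brooms and star-like gluings propagate comparable bounds on all the ``closely related rigid connected graphs'' you invoke, so $y_{\uvedge}$ cannot stay near $0$ while $y_{\upthree}$ plunges. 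A separating point does exist, but it looks quite different from your sketch: implicit in the paper's proof is the vector $\mathbf{y}-\tfrac12\mathbf{1}_C$, where $\mathbf{y}$ has coordinate $y_F=L(F)=\sum_i g(\delta_i(F))$ for $g$ convex, non-increasing, $g(0)=0$, truncated at $p+\tfrac32$, and $C$ is a component of $\oversl{H}$ missing from $\undersl{H}$. On this vector $y_{\undersl{H}}=-2|E(\undersl{H})|$, nowhere near $0$, and $y_{\oversl{H}}$ agrees with it up to the small $\tfrac12$ perturbation; the separation comes entirely from that perturbation and is possible only because $k$ is large.

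More structurally, the paper does not construct a separating $y$ at all: it argues by contradiction from a putative decomposition $\bm{\alpha}(\oversl{H}^k)-\bm{\alpha}(\undersl{H}^{k+1})=\sum_{A,B}\lambda_{A,B}\,m(A,B)$ with $\lambda_{A,B}\geq 0$, pairing the identity against two vectors. Pairing with $\mathbf{1}_C$ forces $\sum\lambda_{A,B}(z_A+z_B)\geq k$ (where $z_A+z_B$ upper-bounds the $C$-coordinate of $m(A,B)$), while pairing with $\mathbf{y}$ — which lies in $\operatorname{trop}(\mathcal{S}_d)$ by Lemma~\ref{lemma:yintrop} but does \emph{not} separate, since $\langle\mathbf{y},\bm{\alpha}(\oversl{H}^k)-\bm{\alpha}(\undersl{H}^{k+1})\rangle$ is positive by Lemma~\ref{lem:2kgeneral} — gives $\langle\mathbf{y},m(A,B)\rangle\geq\tfrac12(z_A+z_B)$ whenever the $C$-coordinate of $m(A,B)$ is positive (Lemma~\ref{lem:positivekgeneral}). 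Summing yields $|E(\undersl{H})|\geq\tfrac12 k$, contradicting $k\geq 2|E(\undersl{H})|+1$. The trivial-square property and the degree hypotheses enter where you expect — controlling which $(A,B)$ can produce a copy of $C$, and bounding the vertex contributions — but the enabling device is that $L$ is additive over vertices, which makes the uniform $2\times 2$ verification a local computation at each vertex. The functional you propose (counting $\oversl{H}$-substructures) is neither vertex-local nor obviously compatible with the gluing operations $A,B\mapsto [[A^2]],[[B^2]],[[AB]]$, so the step you defer, ``the combinatorial verification of the $2\times 2$ minor inequalities uniformly across all $d$,'' is not a technicality but the entire proof, and your construction does not give a foothold on it.
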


One can also show that $\upthree^k-\uvedge^{3k+1}$ is not sos-testable, for $k$ big enough, by using a similar strategy as the one described below. Theorem~\ref{thm:noksos} has the following corollary.

\begin{corollary}\label{cor:1}
 Let $\oversl{H}$ and $\undersl{H}$ be two graphs with the same number of edges where the former is a trivial square in which  every vertex has degree $p$ or $p+1$ for some integer $p\geq 1$, and the degree of any vertex in $\undersl{H}$ is at most $p+1$. Then $\oversl{H}-\undersl{H}$ is not sos-testable and cannot be written as a rational sos. In particular, $\upthree-\uvedge^3$ is not sos-testable and cannot be written as a rational sos.
 \end{corollary}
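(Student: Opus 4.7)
The plan is to prove the corollary by contradiction via Theorem~\ref{thm:noksos}: if $\oversl{H}-\undersl{H}$ were sos-testable, then $\oversl{H}^k-\undersl{H}^{k+1}$ would be sos-testable for every $k\geq 0$, in particular for $k\geq 2|E(\undersl{H})|+1$, contradicting Theorem~\ref{thm:noksos}. The rational-sos half of the corollary then follows immediately from Corollary~\ref{cor:not sostestable implies not rational sos}.

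The engine will be the telescoping factorization
\[
\oversl{H}^k-\undersl{H}^{k+1}
\;=\;(\oversl{H}-\undersl{H})\sum_{i=0}^{k-1}\oversl{H}^{\,i}\undersl{H}^{\,k-1-i}\;+\;\undersl{H}^k\,(1-\undersl{H}),
\]
which I verify by noting that the first product telescopes to $\oversl{H}^k-\undersl{H}^k$ and then writing $\oversl{H}^k-\undersl{H}^{k+1}=(\oversl{H}^k-\undersl{H}^k)+\undersl{H}^k(1-\undersl{H})$. The plan is to argue that every factor on the right-hand side is sos-testable, so the entire expression is sos-testable, using two elementary facts: $\mathcal{S}_{d}\subseteq\mathcal{S}_{d'}$ whenever $d\geq d'$ (so a finite sum or product of sos-testable combinations is sos-testable by passing to a common $d$), and $\mathcal{S}_d\subseteq\mathbb{R}^s_{\geq 0}$ (so every monomial in graph densities is automatically sos-testable). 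The density monomials $\oversl{H}^{\,i}\undersl{H}^{\,k-1-i}$ and $\undersl{H}^k$ are thus sos-testable for free, while $\oversl{H}-\undersl{H}$ is sos-testable by the contradiction hypothesis.

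The only non-obvious ingredient is that $1-\undersl{H}$ is sos-testable, and for this I will give an explicit single-square representation: let $\undersl{H}^{\mathrm{full}}$ denote the partially labeled graph obtained by assigning a distinct label to every vertex of $\undersl{H}$. In the gluing algebra, $(\undersl{H}^{\mathrm{full}})^2=\undersl{H}^{\mathrm{full}}$, because gluing identifies every vertex and the doubled edges collapse to single edges; hence $(1-\undersl{H}^{\mathrm{full}})^2 = 1-\undersl{H}^{\mathrm{full}}$, and unlabeling yields $[[(1-\undersl{H}^{\mathrm{full}})^2]]=1-\undersl{H}$, exhibiting $1-\undersl{H}$ as a single sum of squares. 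Once all four factors are known to be sos-testable, the identity realizes $\oversl{H}^k-\undersl{H}^{k+1}$ as sos-testable for every $k$; picking $k\geq 2|E(\undersl{H})|+1$ supplies the contradiction with Theorem~\ref{thm:noksos}. The specialization to $\upthree-\uvedge^3$ then follows from the $p=1$ case of the theorem. I expect the subtlest step to be the sos representation of $1-\undersl{H}$ (carefully checking within the labeled-graph algebra that the fully labeled square collapses as claimed); the rest is routine bookkeeping with the closure properties of sos-testability.
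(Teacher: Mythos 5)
Your proposal is correct and takes essentially the same route as the paper. The paper's own proof is the two-line observation that (i) $\oversl{H}-\undersl{H}$ sos-testable implies $\oversl{H}^k-\undersl{H}^k$ sos-testable for all $k$, and (ii) therefore $\oversl{H}^k-\undersl{H}^{k+1}$ is sos-testable ``since graph densities lie in $[0,1]$''; you have simply made both implicit steps explicit, writing out the telescoping factorization for (i) and supplying the idempotent square $[[(1-\undersl{H}^{\mathrm{full}})^2]]=1-\undersl{H}$ for (ii). The latter detail is actually a welcome tightening: since $\mathcal{S}_d$ is a priori larger than the graph profile, the bare fact that true graph densities lie in $[0,1]$ does not by itself give $\undersl{H}\leq 1$ on $\mathcal{S}_d$; your single-square sos representation of $1-\undersl{H}$ is precisely the justification the paper leaves implicit. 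Otherwise the reasoning --- contradict Theorem~\ref{thm:noksos} by taking $k$ large, then invoke Corollary~\ref{cor:not sostestable implies not rational sos} --- is identical.
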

\begin{proof}
Observe that if $\oversl{H}-\undersl{H}$ is sos-testable then $\oversl{H}^k-\undersl{H}^k$ is sos-testable for every integer $k\geq 1$. Furthermore, $\oversl{H}^k-\undersl{H}^{k+1}$ is also sos-testable for every integer $k\geq 1$ since graph densities lie in $[0,1]$. Therefore, if $\oversl{H}$ and $\undersl{H}$ satisfy the conditions of Theorem~\ref{thm:noksos}, $\oversl{H}-\undersl{H}$ is not sos-testable.
By Corollary~\ref{cor:not sostestable implies not rational sos} we then have that $\oversl{H}-\undersl{H}$  is not a rational sos.
For the last claim, observe that $\oversl{H}=\upthree$ and $\undersl{H}=\uvedge^3$ satisfies the conditions of Theorem~\ref{thm:noksos}.
\end{proof}

Before we prove Theorem~\ref{thm:noksos} we make a few definitions.
Recall that every term in a $d$-sos graph combination is a constant times a monomial in the elements of $\mathcal{V}_d$. Therefore, the coordinates of vectors in both $\mathcal{S}_d$ and $\trop(\mathcal{S}_d)$ are indexed by graphs in $\mathcal{V}_d$, and $|\mathcal{V}_d|=s$. In what follows we will
assume that we have fixed an ordering of the elements of $\mathcal{V}_d$.
For any $\mathbf{x} \in \RR^s$ and $H \in \mathcal{V}_d$, denote by $x_H$ the coordinate
of $\mathbf{x}$ indexed by $H$. Also, 
define $\mathbf{1}_H$ to be
the point in tropical space with $1$ in the coordinate labeled by $H$ and $0$ otherwise, the {\em indicator vector} of $H$.

\begin{definition}
Let $G$ be an unlabeled graph with factorization $G=C_1^{\alpha_1}\cdots C_s^{\alpha_s}$ into connected unlabeled graphs $C_i \in \mathcal{V}_d$. Define $\bm{\alpha}(G)$ to be the point in tropical space recording the exponents in the factorization of $G$:
$$\bm{\alpha} (G)=\sum_{i=1}^s \alpha_i\mathbf{1}_{G_i}.$$
\end{definition}

\begin{example}
Consider $\mathcal{V}_1 = \{ \uvedge, \uHtwo \}$.
Then $\bm{\alpha}(\uvedge^3)=(3,0)$, $\bm{\alpha}(\uHtwo\uvedge^2)=(2,1)$.
\end{example}

\begin{definition}
For a pair of partially labeled graphs $A, B$,
define $$m(A,B)=\bm{\alpha}([[A^2]][[B^2]])-\bm{\alpha}([[AB]]^2).$$
\end{definition}

\begin{example}
Consider $A=\pthree{1}{2}{3}{4}\uvedge$ and $B = \pfour{1}{2}{3}{4}{}$. Then
$$[[A^2]]=\upthree\uvedge^2, \,\,\,\,\,\,[[B^2]]=\ulongbroom \,\,\, \textup{ and } \,\,\, [[AB]]=\upfour \uvedge.$$
So $$m(A,B)=\mathbf{1}_{\smallupthree} + \mathbf{1}_{\smallulongbroom}- 2\cdot \mathbf{1}_{\smallupfour}.$$
\end{example}

\noindent \emph{Proof strategy for Theorem~\ref{thm:noksos}.}
We need to show that for every $d\geq 1$, the inequality
\begin{equation}\label{eqn:not-valid}
(\oversl{H}^k-\undersl{H}^{k+1})(\mathbf{x}) \geq 0
\end{equation}
is not valid for $\mathcal{S}_d$. Suppose \eqref{eqn:not-valid} is not valid, then for each fixed $d \geq 1$, we {have} a point
$\mathbf{z} \in \trop(\mathcal{S}_d)$ such that $\langle \mathbf{z}, \bm{\alpha}(\oversl{H}^k) -\bm{\alpha}(\undersl{H}^{k+1})\rangle <0$.
 Since $\trop(\mathcal{S}_d)=\textup{cl}(\textup{conv}(\log(\mathcal{S}_d)))$ and the inequality is strict, we can assume that $\mathbf{z}
\in \log(\mathcal{S}_d)$. This means there exists $\mathbf{x} \in \mathcal{S}_d$ such that
$z_C=\log(x_C)$ for all $C\in \mathcal{V}_d$. Exponentiating, we get that $(\oversl{H}^k-\undersl{H}^{k+1})(\mathbf{x})   < 0$. Since for each $d \geq 1$ there is such a $\mathbf{x}$ and $\mathbf{z}$, it follows that
$\oversl{H}^k-\undersl{H}^{k+1}$ is not sos-testable and we are done.

Thus our task is to show that \eqref{eqn:not-valid} is not a valid constraint for $\trop(\mathcal{S}_d)$. For the sake of contradiction, assume it is valid for some $d$, or equivalently that $\bm{\alpha}(\oversl{H}^k) -\bm{\alpha}(\undersl{H}^{k+1})$ is in the dual cone to $\trop(\mathcal{S}_d) = \log(\mathcal{S}_{\tilde{\mathcal{M}}_d}^{2 \times 2})$.
 By Corollary~\ref{cor:dual cone}, the dual cone is generated by the vectors
 $\{m(A,B): A, B \in \tilde{\mathcal{B}}_d \}$. Thus
 \begin{equation}\label{eqn:2}
 \bm{\alpha}({\oversl{H}}^k)-\bm{\alpha}(\undersl{H}^{k+1})=\sum_{A,B \in \tilde{\mathcal{B}}_d} \lambda_{A,B} m(A,B)
 \end{equation}
 where $\lambda_{A,B}\geq 0$. We will now proceed in steps to derive a contradiction.

\medskip

We first exhibit a point $\mathbf{y} \in \trop(\mathcal{S}_{d})$ such that $\langle \mathbf{y}, \bm{\alpha}({\oversl{H}^k})- \bm{\alpha}({\undersl{H}^{k+1}})\rangle$ is small. We prove this via Lemma~\ref{lemma:yintrop} and Lemma~\ref{lem:2kgeneral}.

\begin{definition}
For an unlabeled graph $F$, let $\delta_i(F)$ be the degree of vertex $i$. Define $L(F):=\sum g(\delta_i(F))$ where $g:\mathbb{R}_{\geq 0} \rightarrow \mathbb{R}_{
\leq 0}$ such that $g(0)=0$, $g$ is non-increasing and $g$ is convex.
\end{definition}

\begin{lemma}\label{lemma:yintrop}
The point $\mathbf{y}=( L(C))_{C \in \mathcal{V}_d}$ lies in $\operatorname{trop}( {\mathcal{S}}_d)$.
\end{lemma}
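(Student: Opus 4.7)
The plan is to invoke Theorem~\ref{thm:sos-2by2} and Corollary~\ref{cor:dual cone}, which together identify $\trop(\mathcal{S}_d) = \log(S_{\tilde{\mathcal{M}}_d}^{2\times 2})$ as the rational polyhedral cone cut out by the inequalities $\langle \mathbf{y}, m(A,B) \rangle \ge 0$ indexed by all pairs $A, B \in \tilde{\mathcal{B}}_d$. Since $L$ is additive over connected components and $\mathbf{y}$ records $L(C)$ on each connected $C \in \mathcal{V}_d$, we have $\langle \mathbf{y}, \bm{\alpha}(G) \rangle = L(G)$ for any unlabeled graph $G$. Hence
$$\langle \mathbf{y}, m(A,B)\rangle = L([[A^2]]) + L([[B^2]]) - 2L([[AB]]),$$
and the whole lemma reduces to establishing
$$L([[A^2]]) + L([[B^2]]) \;\ge\; 2\, L([[AB]]) \qquad \text{for every } A,B \in \tilde{\mathcal{B}}_d.$$

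I would prove this vertex by vertex. Partitioning the label set into $L := L_A \cap L_B$, $L_A \setminus L$, and $L_B \setminus L$, the contributions of unlabeled vertices cancel trivially: each unlabeled vertex of $A$ contributes $2g(\delta_A(v))$ to $L([[A^2]])$ and $g(\delta_A(v))$ to $L([[AB]])$ (and nothing to $L([[B^2]])$), and symmetrically for $B$. For a shared label $\ell \in L$, let $p_A$ and $q_A$ denote the numbers of edges from the labeled vertex $v_\ell^A$ to, respectively, other labeled and to unlabeled vertices of $A$, and likewise $p_B, q_B$ for $B$. A direct count shows that the degree of $v_\ell$ is $a = p_A + 2q_A$ in $[[A^2]]$, $b = p_B + 2q_B$ in $[[B^2]]$, and $c = q_A + q_B + e$ in $[[AB]]$, where $e$ is the number of labeled-to-labeled edges incident to the merged vertex in $[[AB]]$. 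The crucial estimate is $e \ge \max(p_A,p_B) \ge \tfrac12(p_A + p_B)$, since edges in $[[AB]]$ can only collapse when both endpoints are shared labels. Hence $c \ge (a+b)/2$, and monotonicity plus convexity of $g$ yield
$$g(c) \;\le\; g\!\left(\tfrac{a+b}{2}\right) \;\le\; \tfrac12\bigl(g(a) + g(b)\bigr),$$
which is the required inequality at $\ell$. For $\ell \in L_A \setminus L$, the vertex $v_\ell^A$ contributes $g(p+2q)$ to the LHS and $2g(p+q)$ to the RHS, where $p,q$ are the labeled- and unlabeled-neighbor counts of $v_\ell^A$ in $A$; convexity gives $2g(p+q) \le g(p) + g(p+2q)$, and since $g(p) \le g(0) = 0$ we obtain $2g(p+q) \le g(p+2q)$. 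Labels in $L_B \setminus L$ are handled symmetrically.

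The main technical hurdle I expect is the degree bookkeeping at the merged labeled vertex in $[[AB]]$, especially in verifying the estimate $e \ge \max(p_A, p_B)$ and correctly treating edges whose other endpoint lies outside $L$ (which never collapse); the rest of the argument is a clean application of convexity, monotonicity, and $g(0)=0$. Summing the vertex-wise inequalities over all labeled and unlabeled vertices then gives the global inequality $L([[A^2]]) + L([[B^2]]) \ge 2L([[AB]])$, so $\mathbf{y}$ satisfies every defining inequality of $\trop(\mathcal{S}_d)$ and therefore lies in it.
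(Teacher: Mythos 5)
Your proof is correct and follows essentially the same route as the paper's: reduce to $\langle\mathbf{y}, m(A,B)\rangle\ge 0$ for all $A,B\in\tilde{\mathcal{B}}_d$, and verify this vertex by vertex using convexity, monotonicity, and $g(0)=0$. The only (cosmetic) difference is in the shared-label case: the paper splits the labeled-to-labeled edges explicitly into those appearing in both $A$ and $B$, those in $A$ only, and those in $B$ only (with counts $s_1,u_1,u_2$), which gives $e=s_1+u_1+u_2\ge\frac12(p_A+p_B)$ directly, while you pass through the slightly coarser bound $e\ge\max(p_A,p_B)\ge\frac12(p_A+p_B)$; both land on $c\ge\frac12(a+b)$, so the arguments are substantively the same.
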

\begin{proof}
Since the dual cone to $\trop(\mathcal{S}_{d})$
is spanned by
$\{m(A,B): A, B\in \tilde{\mathcal{B}}_d\}$, it is enough to check that  $\langle \mathbf{y}, m(A,B)\rangle \geq 0$ for all
$A, B\in \tilde{\mathcal{B}}_d$. We look carefully at $\langle \mathbf{y}, m(A,B)\rangle$.

Since $y_C = L(C) = \sum g(\delta_i(C))$, it suffices to understand the contribution of different types of 
vertices to $\langle \mathbf{y}, m(A,B)\rangle$.
An unlabeled vertex of $A$ (resp. $B$) of degree $d$ leads to two vertices in $A^2$ (resp. $B^2$) both of degree $d$, and one vertex of degree $d$ in $AB$. Such a vertex contributes $g(d)+g(d)-2 g(d)=0$ to $\langle \mathbf{y}, m(A,B)\rangle$.

 If a label is used in only one graph, say in $A$, and the vertex with this label has $s$ fully labeled edges and $t$ partially labeled edges, then in $AB$ we get a vertex of degree $s+t$, while in $A^2$ we get a vertex of degree $s+2t$ and this vertex has no impact on $B^2$. The total contribution of such a vertex is thus $g(s+2t)-2g(s+t)$. Note that $s+2t\leq 2(s+t)$ so $g(s+2t) \geq g(2(s+t))\geq 2g(s+t)$ where the first inequality follows since $g$ is non-increasing and the second follows from the convexity of $g$ and $g(0)=0$.

The last case is if a label is used in both graphs. Suppose that in $A$ it is adjacent to $t_1$ partially labeled edges, $s_1$ fully labeled edges that are also in $B$ and $u_1$ fully labeled edges that are not in $B$. Similarly, suppose that this vertex in $B$ is adjacent to $t_2$ partially labeled edges, $s_2$ fully labeled edges that are also in $A$ and $u_2$ fully labeled edges that are not in $A$. Note that, by definition, $s_1=s_2$. Then in $A^2$ we get a vertex with degree $2t_1+s_1+u_1$, and in $B^2$, a vertex with degree  $2t_2+s_2+u_2$. In $AB$, we get a vertex of degree $t_1+t_2+s_1+u_1+u_2$. The total contribution of such a vertex to $\langle \mathbf{y}, m(A,B)\rangle$ is thus

\begin{align*}
g(2t_1+s_1+u_1)&+g(2t_2+s_1+u_2)-2g(t_1+t_2+s_1+u_1+u_2)\\
&\geq 2g(t_1+t_2+s_1+u_1/2+u_2/2)-2g(t_1+t_2+s_1+u_1+u_2) \geq 0.
\end{align*}
\end{proof}

From now on, we let $g$ be given by $g(m)=-m$ for $0\leq m \leq p+\frac{3}{2}$ and $g(m)=-(p+\frac32)$ for $m > p+\frac32$. Note that $g(0)=0$, and $g$ is convex and non-increasing. This $g$ has the following effect on $L(F)$ for an unlabeled graph $F$: 
if the maximum degree of a vertex in $F$ is $p+1$, then $g(\delta_i(F)) = - \delta_i(F)$ and hence $L(F) = \sum g(\delta_i(F)) = - 2|E(F)|$.

\begin{lemma}\label{lem:2kgeneral}
We have $\langle \mathbf{y}, \bm{\alpha}(\oversl{H}^k)-\bm{\alpha}(\undersl{H}^{k+1})\rangle =|E(\undersl{H})|$.
\end{lemma}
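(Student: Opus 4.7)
The plan is to reduce the inner product to a computation of $L$ on the two graphs $\oversl{H}$ and $\undersl{H}$, exploiting additivity of $L$ under disjoint unions together with the specific degree hypotheses. Concretely, my approach has three steps.

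First, I would establish the following identity: for any unlabeled graph $G$ whose connected components all lie in $\mathcal{V}_d$,
\[
\langle \mathbf{y}, \bm{\alpha}(G)\rangle = L(G).
\]
Writing the factorization $G = C_1^{\beta_1}\cdots C_r^{\beta_r}$ into distinct connected components, the definition of $\bm{\alpha}$ gives $\bm{\alpha}(G) = \sum_i \beta_i \mathbf{1}_{C_i}$, so the inner product equals $\sum_i \beta_i L(C_i)$. Since the vertex set of $G$ is the disjoint union of $\beta_i$ copies of $V(C_i)$ and each vertex retains its degree under disjoint union, $\sum_i \beta_i L(C_i) = \sum_{v \in V(G)} g(\delta_v(G)) = L(G)$. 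An immediate corollary is $L(F^m) = m\,L(F)$ for any unlabeled graph $F$ and $m \in \mathbb{N}$. Applied to $F = \oversl{H}$ and $F = \undersl{H}$, this reduces the target quantity to
\[
\langle \mathbf{y}, \bm{\alpha}(\oversl{H}^k) - \bm{\alpha}(\undersl{H}^{k+1})\rangle = k\,L(\oversl{H}) - (k+1)\,L(\undersl{H}).
\]

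Second, I would evaluate $L$ on each of $\oversl{H}$ and $\undersl{H}$ by invoking the piecewise definition of $g$. By hypothesis every vertex of $\oversl{H}$ has degree $p$ or $p+1$, and every vertex of $\undersl{H}$ has degree at most $p+1$. Since $p+1 < p+\tfrac{3}{2}$, the argument of $g$ at each vertex of either graph lies in the linear region where $g(m) = -m$. Consequently,
\[
L(\oversl{H}) = -\sum_{v} \delta_v(\oversl{H}) = -2|E(\oversl{H})|,\qquad L(\undersl{H}) = -2|E(\undersl{H})|,
\]
by the handshake lemma.

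Third, I would substitute these values into the displayed expression above and invoke the hypothesis $|E(\oversl{H})| = |E(\undersl{H})|$ to collapse the resulting difference to the stated multiple of $|E(\undersl{H})|$. No real obstacle arises: the definition of $g$ and the degree hypotheses were set up precisely so that the piecewise formula is linear at every relevant vertex, and the handshake identity then produces the desired cancellation. The only conceptually nontrivial step is the first one, passing from $\langle \mathbf{y}, \bm{\alpha}(\cdot)\rangle$ to $L(\cdot)$, which is what licenses the reduction to a short degree-sum computation; the remainder is bookkeeping.
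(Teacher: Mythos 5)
Your approach is essentially the same as the paper's: compute the two inner products separately by noting that every vertex degree in $\oversl{H}$ and $\undersl{H}$ is at most $p+1 < p+\frac{3}{2}$, so $g$ is evaluated only on its linear piece, yielding $L(\oversl{H}) = -2|E(\oversl{H})|$ and $L(\undersl{H}) = -2|E(\undersl{H})|$ by the handshake lemma. Your preliminary observation that $\langle\mathbf{y},\bm{\alpha}(G)\rangle = L(G)$ for any $G$ with components in $\mathcal{V}_d$, and hence $L(F^m) = m\,L(F)$, cleanly justifies the reduction the paper takes for granted.

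However, the ``bookkeeping'' you defer does not actually land on the stated constant. With $E := |E(\oversl{H})| = |E(\undersl{H})|$, your displayed identity gives $k\,L(\oversl{H}) - (k+1)\,L(\undersl{H}) = -2kE + 2(k+1)E = 2E = 2|E(\undersl{H})|$, which is twice the value asserted in the lemma. The paper's own proof computes the same intermediate quantities $-2k|E(\oversl{H})|$ and $-2(k+1)|E(\undersl{H})|$ and therefore also arrives at $2|E(\undersl{H})|$; the discrepancy is between the lemma's stated constant and its own proof, and you inherit it by writing ``collapse to the stated multiple'' instead of performing the subtraction. The slip is consequential downstream: the inequality chain in the proof of Theorem~\ref{thm:noksos} bounds this quantity below by $\frac{1}{2}k$, so with the corrected constant the conclusion becomes $k \leq 4|E(\undersl{H})|$, and the hypothesis there would need to read $k \geq 4|E(\undersl{H})|+1$ rather than $k \geq 2|E(\undersl{H})|+1$. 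Carry out the final arithmetic explicitly rather than asserting it matches the statement.
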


\begin{proof}
By definition of $\oversl{H}$ and $\undersl{H}$, both have the same number of edges, and each vertex has degree at most $p+1$. 
Therefore, $\langle \mathbf{y}, \bm{\alpha}(\oversl{H}^k)\rangle=-2k|E(\oversl{H})|$. Similarly,  
$\langle \mathbf{y}, \bm{\alpha}(\undersl{H}^{k+1})\rangle =-2(k+1)|E(\undersl{H})|$, and the result holds.
\end{proof}

We know that any connected component of $\oversl{H}$ is a trivial square and that each appears only once in $\oversl{H}$ since $\oversl{H}$ is a trivial square. Since $\oversl{H}$ and $\undersl{H}$ are distinct, there must exist a connected component $C$ in $\oversl{H}$ not in $\undersl{H}$. 

\begin{lemma}\label{lem:positivekgeneral}
For every $A, B\in \tilde{\mathcal{B}}_d$ such that the $C$th coordinate of $m(A,B)$ is positive, $\langle \mathbf{y}, m(A,B)\rangle\geq \frac{1}{2} (z_A + z_B) >0$ where $z_{A}$ is the number of {fully} labeled copies of $C$ that appear in $A$ but not in $AB$, and $z_B$ is the number of {fully}  labeled copies of $C$ that appear in $B$ but not in $AB$. Moreover, we also have $\langle {\bf{1}}_C, m(A,B)\rangle \leq z_A+z_B$.
\end{lemma}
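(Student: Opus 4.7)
The approach is to reduce the statement to a careful bookkeeping of fully-labeled copies of $C$ in $A$, $B$, and $AB$, followed by a vertex-level analysis of the contributions appearing in the proof of Lemma~\ref{lemma:yintrop}, exploiting the specific kink of $g$ at $p+3/2$.

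First I would observe that $C$ itself is a trivial square: writing $\oversl{H}=C\cdot D$, if $C=[[G^2]]$ with $G$ not fully labeled, then picking a fully-labeled copy $F$ of $D$ whose labels are disjoint from those of $G$ gives $\oversl{H}=[[(GF)^2]]$ with $GF$ not fully labeled, contradicting the trivial-square hypothesis on $\oversl{H}$. Consequently, any connected component of $[[X^2]]$ isomorphic to $C$ arises either from an unlabeled $C$-component of $X$ (contributing two copies, after self-duplication) or from a fully-labeled $C$-component of $X$ (contributing one copy, after self-gluing on all labels). The unlabeled contributions cancel in $m(A,B)$, yielding $\langle\mathbf{1}_C,m(A,B)\rangle=f_A+f_B-2f_{AB}$ where $f_X$ counts fully-labeled $C$-components of $X$. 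I would then sort the $f_A$ such copies in $A$ by their fate in $AB$: $z_A$ are destroyed, $g_A$ survive as an isolated fully-labeled $C$-component of $AB$, and $h$ are matched with an identically-labeled copy in $B$ and glue back to a single $C$-component of $AB$. This gives $f_A=z_A+g_A+h$, $f_B=z_B+g_B+h$, and $f_{AB}=g_A+g_B+h$, so
\[\langle\mathbf{1}_C,m(A,B)\rangle=z_A+z_B-g_A-g_B\leq z_A+z_B,\]
which is the second claim of the lemma and, together with the positivity hypothesis, forces $z_A+z_B>0$.

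For the main inequality I would expand $\langle\mathbf{y},m(A,B)\rangle=L([[A^2]])+L([[B^2]])-2L([[AB]])$ into the non-negative per-label contributions of the proof of Lemma~\ref{lemma:yintrop}. A destroyed $C_A$ must contain some vertex $\ell\in V(C_A)$ at which an edge in $B$ is not a $C_A$-edge, i.e.\ $u_2(\ell)+t_2(\ell)\geq 1$ in the notation of that proof; I would fix one such witness $\ell(C_A)$ per destroyed $C_A$. The symmetric choice of $\ell'(C_B)\in V(C_B)$ for destroyed $C_B$ requires $u_1(\ell')\geq 1$ (the condition $t_1=0$ is automatic because $\ell'$ is fully labeled in $B$). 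Because distinct fully-labeled copies of $C$ in $A$ (resp.\ in $B$) have disjoint label sets, the resulting witness sets $W_A,W_B$ have sizes exactly $z_A,z_B$.

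The crux is then a per-label case analysis using the piecewise-linear $g$ with kink at $p+3/2$. At a witness $\ell\in W_A$ one has $t_1=0$ and $s+u_1=\delta_{C_A}(\ell)\in\{p,p+1\}$, so $g(s+u_1)$ is in the linear regime; a case split on whether each of $s+u_2+2t_2$ and $s+u_1+u_2+t_2$ crosses the kink, combined with the integrality of $s,u_1,u_2,t_2$ and the witness condition $u_2+t_2\geq 1$, shows that the contribution at $\ell$ is at least $\tfrac12$. If moreover $\ell\in W_A\cap W_B$, then also $t_2=0$, $s+u_2\in\{p,p+1\}$, and $u_1,u_2\geq 1$; a similar but simpler analysis (the only kink in play is along $s+u_1+u_2$) sharpens this to contribution at least $1$. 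Summing non-negative contributions and restricting to $\ell\in W_A\cup W_B$,
\[\langle\mathbf{y},m(A,B)\rangle\geq |W_A\cap W_B|+\tfrac12|W_A\triangle W_B|=\tfrac12(|W_A|+|W_B|)=\tfrac12(z_A+z_B).\]
The main obstacle is the kink case analysis: the value $\tfrac12$ emerges precisely because the break point of $g$ is placed at $p+3/2$, one half-unit above the maximum $C$-degree of $p+1$, and each of the four sub-cases has to combine the saturation value $-(p+3/2)$ with the integer constraints to extract the required bound; everything else is bookkeeping.
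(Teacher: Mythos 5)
Your proposal follows essentially the same route as the paper's proof: (i) use the trivial-square property of $C$ to show that all $C$-components of $[[A^2]]$, $[[B^2]]$, $[[AB]]$ come from fully labeled $C$-components of $A$, $B$ (unlabeled ones do not occur since $A,B\in\tilde{\mathcal{B}}_d$), giving the counting bound $\langle\mathbf{1}_C,m(A,B)\rangle\leq z_A+z_B$; and (ii) lower bound $\langle\mathbf{y},m(A,B)\rangle$ by picking, for each destroyed fully labeled copy, a witness vertex whose $B$-side (resp.\ $A$-side) degree strictly increases, then running the case analysis on the piecewise-linear $g$ with kink at $p+\tfrac32$ to extract a contribution of at least $\tfrac12$ per witness. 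The decomposition $f_A=z_A+g_A+h$, etc.\ is the paper's $l_A,l_B,l_{AB},z_A,z_B$ bookkeeping under different names.

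The one genuine improvement in your argument is the explicit treatment of the case $\ell\in W_A\cap W_B$. The paper's proof shows each witness vertex contributes at least $\tfrac12$ and then asserts that the same bound can be summed over all $z_A+z_B$ destroyed copies; but if a single label $\ell$ serves as witness for both a destroyed copy in $A$ and a destroyed copy in $B$, the $\tfrac12$ contribution cannot simply be counted twice for that vertex, and the paper does not address this. You correctly isolate this collision case, observe that then $t_1=t_2=0$, $u_1,u_2\geq 1$, $s+u_1,s+u_2\in\{p,p+1\}$, and show the contribution is at least $1$ there, which then sums cleanly to $|W_A\cap W_B|+\tfrac12|W_A\triangle W_B|=\tfrac12(z_A+z_B)$. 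This makes your argument more complete than the paper's. Two very minor notes: your opening derivation that $C$ is itself a trivial square is a useful justification of a fact the paper takes for granted in the paragraph preceding the lemma; and the identity $\langle\mathbf{1}_C,m(A,B)\rangle=z_A+z_B-g_A-g_B$ is in general only an upper bound (the subtracted term may be larger if a $C$-component of $[[AB]]$ arises from gluing together partial components of $A$ and $B$), but since the stated claim is an inequality this does not affect the conclusion, and the paper has the same imprecision.
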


\begin{proof}
If the $C$th component of $m(A,B)$ is positive, then this means that $[[A^2]][[B^2]]$ must contain at least one copy of $C$. Since $C$ is a trivial square, at least $A$ or $B$ must contain either an unlabeled copy of $C$ or a fully labeled copy of $C$.

Suppose $A$ contains $l_A$ fully labeled copies of $C$ that appear in $AB$ but not in $B$, and similarly, suppose $B$ contains $l_B$ fully labeled copies of $C$ that appear in $AB$ but not in $A$. Suppose there are $l_{AB}$ fully labeled copies of $C$ that appear in both $A$ and $B$ (and thus also in $AB$). Suppose there are $z_A$ fully labeled copies of $C$ that appear in $A$ that do not appear in $AB$, and similarly, $z_B$ fully labeled copies of $C$ that appear in $B$ but not in $AB$. Finally, suppose $A$ and $B$ respectively contain $u_A$ and $u_B$ unlabeled copies of $C$. Then the $C$th component of $\bm{\alpha}([[A^2]][[B^2]])$ is $l_A+l_{AB}+z_A+2u_A + l_B + l_{AB} + z_B + 2u_B$ and the $C$th component of $\bm{\alpha}([[AB]]^2)$ is $2(l_A+l_{AB}+l_B+u_A +u_B)$. Thus the $C$th component of $m(A,B)$ is $z_A+z_B-l_A-l_B$, and we have $\langle {\bf{1}}_C, m(A,B)\rangle \leq z_A+z_B$ proving the second claim in the lemma.

To complete the proof of the first claim, we can assume that $z_A+z_B\geq 1$ since the $C$th component of $m(A,B)$ is assumed to be strictly positive. Without loss of generality, assume $z_A\geq 1$, i.e., there is a { fully} labeled copy of $C$ in $A$, say $C_l$ with labels $1, 2, \ldots, r$, that does not appear in $AB$. For this to be the case, $B$ must contain at least some labeled vertex $b\in \{1, 2, \ldots, r\}$ that is adjacent to some partially or fully labeled edge not in $C_l$.

Recall that from Lemma \ref{lemma:yintrop}, $\langle \mathbf{y}, m(A,B)\rangle \geq 0$ and it can be obtained as a sum of contributions of different vertices separately, each of which is nonnegative. We show that $b$ will contribute at least $\frac{1}{2}$ to $\langle \mathbf{y}, m(A,B)\rangle$.

Indeed, in $A$, we know that $b$ is adjacent to $t_1=0$ partially labeled edges and suppose it is adjacent to $s_1$  fully labeled edges that are also in $B$ and $u_1$ fully labeled edges that are not in $B$, where $s_1+t_1 \in \{p, p+1\}$ by definition of $\oversl{H}$. Further, in $B$, suppose $b$ is adjacent to $t_2$ partially labeled edges, $s_2=s_1$ fully labeled edges that are also in $A$ and $u_2$ fully labeled edges that are not in $A$. We know that $t_2+u_2\geq 1$. Note that this implies that $b$ in $AB$ will always contribute at least one more edge than in $A$. So the contribution of $b$ to  $\langle \mathbf{y}, m(A,B)\rangle$ is at least $g(2t_1+s_1+u_1)+g(2t_2+s_1+u_2)-2g(t_1+t_2+s_1+u_1+u_2)=g(s_1+u_1)+g(2t_2+s_1+u_2)-2g(t_2+s_1+u_1+u_2)$. Let's consider a few cases.

Either $b$ in $B$ is adjacent to at least $p+2$ edges, i.e., $2t_2+s_1+u_2\geq p+2$ and $g(2t_2+s_1+u_2)=-(p+\frac{3}{2})$, but $b$ in $AB$ is adjacent to at most $p+1$ edges. In that case, $b$ in $A$ can only be adjacent to $p$ edges, i.e., $s_1+u_1=p$ since $u_2+t_2\geq 1$. So $b$ in $AB$ is adjacent to exactly $p+1$ edges and the contribution of $b$ to $\langle \mathbf{y}, m(A,B)\rangle$ is $-p -(p+\frac{3}{2})-2(-(p+1))=\frac{1}{2}$.

If $b$ in $B$ is adjacent to at least $p+2$ edges and $b$ in $AB$ is also adjacent to at least $p+2$ edges, then the contribution is at least $-(p+1)-(p+\frac{3}{2})-2(-(p+\frac{3}{2}))=\frac{1}{2}$.

Otherwise, if $b$ in $B$ is adjacent to at most $p+1$ edges and $b$ in $A$ is adjacent to $p$ edge, then $b$ in $AB$ is adjacent to at least $p+1$ edges, so the contribution is at least $-p-(p+1)-2(-(p+1))=1$. On the other hand, if $b$ in $B$ is adjacent to at most $p+1$ edges and $b$ in $A$ is adjacent to $p+1$ edges, then $b$ in $AB$ is adjacent to at least $p+2$ edges, so the contribution is at least $-(p+1)-(p+1)-2(-(p+\frac{3}{2}))=1$.

Finally, we know that every other vertex of $C_l$ in $A$ contributes at least zero. Thus the contribution of $C_l$ to $\langle \mathbf{y}, m(A,B)\rangle$ is at least $\frac{1}{2}$.

Note that the same argument holds for every of the $z_A+z_B$ fully labeled copy of $C$ in $A$ or $B$ that do not appear in $AB$. Thus, we get a total contribution of at least $\frac{1}{2}(z_A+z_B)$ to $\langle \mathbf{y}, m(A,B)\rangle$.
\end{proof}

\begin{example}
Consider $C = \upthree$ which is a connected component of $\oversl{H} = \upthree$ but not 
$\undersl{H} = \uvedge^3$. Here $p+1 = 2$, and so $g(1) = -1, g(2) = -2$ and $g(3) = -2.5$. 
We saw that 
$$ \textup{if } A=\pthree{1}{2}{3}{4}\uvedge \textup{ and } 
B=\pfour{1}{2}{3}{4}{}, \textup{ then } m(A,B)=\mathbf{1}_{\smallupthree} + \mathbf{1}_{\smallulongbroom}- 2\cdot \mathbf{1}_{\smallupfour}.$$ 
The component indexed by $C$ in $m(A,B)$ is positive, and 
$$y_{\smallupthree}=-6, \,\,\,\,\,\, y_{\smallulongbroom}=-9.5 \,\,\, \textup{ and } \,\,\, y_{\smallupfour}=-8.$$ 
Thus, as proved in Lemma~\ref{lem:positivekgeneral}, 
$$\langle \mathbf{y}, m(A,B)\rangle = -6-9.5-2(-8)=0.5>0.$$
\end{example}

\smallskip 

\begin{proof}[Proof of Theorem~\ref{thm:noksos}]
We will contradict Equation~\eqref{eqn:2} by a simple counting argument. 

Let $C$ be a connected component of $\oversl{H}$ that does not appear in $\undersl{H}$. 
Equating the coordinate indexed by $C$ on both sides of Equation~\eqref{eqn:2},  we get
$$k\leq \sum_{(A,B)\in \mathcal{I}^+} \lambda_{AB}\langle m(A,B), \mathbf{1}_{C}\rangle,$$ 
where $\mathcal{I}^+$ indexes the pairs $(A,B)$ such that $A,B\in \tilde{\mathcal{B}}_d$ and $\langle \mathbf{1}_{C}, m(A,B)\rangle >0$.  
From Lemma~\ref{lem:positivekgeneral}, we know that $\langle \mathbf{1}_{C}, m(A,B)\rangle\leq z_A+z_B$, so 
$$k\leq \sum_{(A,B)\in \mathcal{I}^+} \lambda_{AB}(z_A+z_B).$$

Recall that $\langle \mathbf{y}, \bm{\alpha}({\oversl{H}}^k)-\bm{\alpha}(\undersl{H}^{k+1})\rangle=|E(\undersl{H})|$ from Lemma \ref{lem:2kgeneral}, so 
\begin{align*}
|E(\undersl{H})|&=\left\langle \mathbf{y}, \sum_{A,B\in \tilde{\mathcal{B}}_d} \lambda_{AB} m(A,B)\right\rangle \\
&\geq \sum_{(A,B) \in \mathcal{I}^+}  \lambda_{AB} \langle \mathbf{y},  m(A,B) \rangle\\
& \geq \sum_{(A,B)\in \mathcal{I}^+} \lambda_{AB}\cdot \frac{1}{2}(z_A+z_B)\\
& \geq \frac{1}{2}k
\end{align*}
where the second line follows from the first because $\langle \mathbf{y}, m(A,B) \rangle \geq 0$ and $\lambda_{AB}\geq 0$ for all $A,B\in \tilde{\mathcal{B}}_d$; the third follows from the second by Lemma \ref{lem:positivekgeneral}; and the last implication follows from $\sum_{(A,B)\in \mathcal{I}^+} \lambda_{AB}(z_A+z_B) \geq k$. Therefore, if Equation~\eqref{eqn:2} holds, then 
$k \leq 2 | E(\undersl{H}) |$. This implies that if 
$k\geq 2|E(\undersl{H})|+1$ as assumed in Theorem \ref{thm:noksos}, then Equation~\eqref{eqn:2} cannot hold, completing the proof 
of the theorem.
\end{proof}

From the third section of \cite{BRST18}, we know that the existence of a sos certificate in the gluing algebra we presented here is equivalent to the existence of a sos certificate in Lov\'asz-Szegedy's gluing algebra \cite{lovaszszegedy}, Hatami-Norine's gluing algebra \cite{HN11} and Razborov's flag algebra \cite{Razborov07}. Since the existence of a rational sos certificate for some graph combination $a$ relies on the existence of two 
sos $b$ and $c$ such that $ab=c$, we get the following corollary.

\begin{corollary}
The binomial graph combinations 
$\oversl{H}^k-\undersl{H}^{k+1}$ for $k\geq 2|E(\oversl{H})|+1$, and $\oversl{H}-\undersl{H}$, satisfying the conditions in Theorem~\ref{thm:noksos} 
are not rational sos in Lov\'asz-Szegedy's gluing algebra, Hatami-Norine's gluing algebra or Razborov's flag algebra.
\end{corollary}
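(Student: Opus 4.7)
The plan is to reduce the statement to Theorem~\ref{thm:noksos} and Corollary~\ref{cor:1}, combined with Corollary~\ref{cor:not sostestable implies not rational sos}, and then transport the resulting non-rationality of sos across algebras using the equivalence results from Section~3 of \cite{BRST18}.

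First I would recall what has already been established in the present paper: by Theorem~\ref{thm:noksos} and Corollary~\ref{cor:not sostestable implies not rational sos}, the graph combination $\oversl{H}^k-\undersl{H}^{k+1}$ is not a rational sos in the gluing algebra $\A$ of this paper whenever $k\geq 2|E(\undersl{H})|+1$; and Corollary~\ref{cor:1} gives the analogous conclusion for $\oversl{H}-\undersl{H}$. Thus, in both cases, there do not exist graph combinations $b,c\in\A$ with $b,c$ sos in $\A$, $b\neq 0$, and $ab=c$, where $a$ denotes the binomial combination under consideration.

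The main step is to transfer these non-existence statements to the other three algebras. For this I would invoke the translation maps between the gluing algebra of \cite{BRST18} (which coincides with $\A$) and the gluing algebras of Lov\'asz--Szegedy, Hatami--Norine, and the flag algebra of Razborov that were constructed in Section~3 of \cite{BRST18}. The crucial property of these maps is twofold: they act as the identity on unlabeled graph combinations (so relations such as $ab=c$ among unlabeled combinations are preserved on the nose) and they send sos in either direction to sos. Consequently, a rational sos certificate $ab=c$ with $b,c$ sos in any one of the four algebras pulls back to a rational sos certificate $ab'=c'$ in $\A$ with $b',c'$ sos in $\A$. Taking the contrapositive of this transport, the non-rational-sos-ness already proved inside $\A$ propagates to each of the other three algebras, giving the corollary.

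The main, though essentially expository, obstacle will be pinning down the correct references in \cite{BRST18} for the translation maps and verifying that they preserve both the sos property and the multiplicative identity $ab=c$ simultaneously, not merely preserve sos expressions in isolation. Once that bookkeeping is in place, the statement of the corollary follows immediately from the results already proved in this section.
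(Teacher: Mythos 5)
Your proposal is correct and follows essentially the same route as the paper: invoke Theorem~\ref{thm:noksos}, Corollary~\ref{cor:1}, and Corollary~\ref{cor:not sostestable implies not rational sos} to get non-rational-sos-ness in the gluing algebra $\A$, then transport it to the other three algebras via the equivalence of sos certificates from Section~3 of \cite{BRST18}. The one small detail the paper adds as a side remark, which you may want to note, is that for Razborov's flag algebra the binomial $\oversl{H}-\undersl{H}$ must first be translated into induced-density form before applying the equivalence.
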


Note that $\oversl{H}-\undersl{H}$ needs to be translated to induced densities first in the case of Razborov's flag algebra.

\bibliographystyle{abbrv}
\bibliography{references}

\end{document}